\providecommand{\U}[1]{\protect\rule{.1in}{.1in}}
\newtheorem{theorem}{Theorem}
\newtheorem{corollary}[theorem]{Corollary}
\newtheorem{definition}[theorem]{Definition}
\newtheorem{lemma}[theorem]{Lemma}
\newtheorem{proposition}[theorem]{Proposition}
\newtheorem{remark}[theorem]{Remark}
\newenvironment{proof}[1][Proof]{\noindent\textbf{#1.} }{\ \rule{0.5em}{0.5em}}
\begin{document}

\title{A symplectic extension map and a new Shubin class of pseudo-differential operators}

\author{Nuno Costa Dias\textbf{\thanks{Corresponding author; ncdias@meo.pt}}
\and Maurice A. de
Gosson\textbf{\thanks{maurice.de.gosson@univie.ac.at}} \and
Jo\~{a}o Nuno Prata\textbf{\thanks{joao.prata@mail.telepac.pt }}}
\maketitle

\begin{abstract}
For an arbitrary pseudo-differential operator $A:\mathcal{S}(\mathbb{R}%
^{n})\longrightarrow\mathcal{S}^{\prime}(\mathbb{R}^{n})$ with Weyl symbol
$a\in\mathcal{S}^{\prime}(\mathbb{R}^{2n})$, we consider the
pseudo-differential operators $\widetilde{A}:\mathcal{S}(\mathbb{R}%
^{n+k})\longrightarrow\mathcal{S}^{\prime}(\mathbb{R}^{n+k})$
associated with the Weyl symbols
$\widetilde{a}=(a\otimes1_{2k})\circ{s}$, where $1_{2k}(x)=1$ for
all $x\in\mathbb{R}^{2k}$ and ${s}$ is a linear symplectomorphism
of $\mathbb{R}^{2(n+k)}$. We call the operators $\widetilde{A}$
symplectic dimensional extensions of $A$. In this paper we study
the relation between $A$ and $\widetilde{A}$ in detail, in
particular their regularity, invertibility and spectral
properties. We obtain an explicit formula allowing to express the
eigenfunctions of $\widetilde{A}$ in terms of those of $A$. We use
this formalism to construct new classes of pseudo-differential
operators, which are extensions of the Shubin classes
$HG_{\rho}^{m_{1},m_{0}}$ of globally hypoelliptic operators. We
show that the operators in the new classes share the invertibility
and spectral properties of the operators in $HG_{\rho
}^{m_{1},m_{0}}$ but not the global hypoellipticity property.
Finally, we study a few examples of operators that belong to the
new classes and which are important in mathematical physics.
\end{abstract}

{\it Keywords}:\ Weyl pseudo-differential operators; Shubin symbol
classes; Extensions of linear operators; Spectral properties

{\vskip 0.3cm}

{\it MSC [2000]}:\ Primary 47G30, 35S05, 47A05, 35P05; Secondary
47A75

\section{Introduction}

Every continuous linear operator $A:\mathcal{S}(\mathbb{R}^{n})\longrightarrow
\mathcal{S}^{\prime}(\mathbb{R}^{n})$ can be written as a pseudo-differential
operator
\begin{equation}
A\psi(x)=\frac{1}{(2\pi)^{n}}\int_{\mathbb{R}^{n}\times\mathbb{R}^{n}%
}e^{i(x-y)\cdot\xi}a_{\tau}((1-\tau)x+\tau y,\xi)\psi(y)\,dyd\xi
\label{Tau-PDO}%
\end{equation}
in terms of its $\tau$-symbol $a_{\tau}\in\mathcal{S}^{\prime}(\mathbb{R}%
^{n}\times\mathbb{R}^{n})$ (Shubin \cite{sh87}); the integral is convergent
for $a_{\tau}\in\mathcal{S}(\mathbb{R}^{n}\times\mathbb{R}^{n})$ and $\psi
\in\mathcal{S}(\mathbb{R}^{n})$, and should otherwise be interpreted in the
distributional sense. The usual left, right and Weyl pseudo-differential
operators correspond to the cases $\tau=0,1$ and $1/2$, respectively.

Let $\mathcal{L}(\mathcal{S}(\mathbb{R}^{n}),\mathcal{S}^{\prime}%
(\mathbb{R}^{n}))$ be the space of linear and continuous operators
$A:\mathcal{S}(\mathbb{R}^{n})\longrightarrow\mathcal{S}^{\prime}%
(\mathbb{R}^{n})$. In this paper we will define and study a family of
embedding maps ${\mathbb{E}}_{s}:\mathcal{L}(\mathcal{S}(\mathbb{R}%
^{n}),\mathcal{S}^{\prime}(\mathbb{R}^{n}))\longrightarrow\mathcal{L}%
(\mathcal{S}(\mathbb{R}^{n+k}),\mathcal{S}^{\prime}(\mathbb{R}^{n+k}))$
indexed by the elements $s$ of the symplectic group of
$\mathbb{R}^{2(n+k)}$, which we will call \textit{symplectic
dimensional extension maps}. In a nutshell, we are interested in
these transformations because (i) they generate large classes of
important operators $\widetilde{A}\in\mathcal{L}(\mathcal{S}(\mathbb{R}%
^{n+k}),\mathcal{S}^{\prime}(\mathbb{R}^{n+k}))$ and (ii) many properties of
the operators ${\mathbb{E}}_{s}[A]$, including the spectral properties, can be
completely determined from those of $A$.

From now on we will write the Weyl symbols simply as $a=a_{1/2}$ and denote
the Weyl correspondence by $A\overset{\text{Weyl}}{\longleftrightarrow}a$ or
$a\overset{\text{Weyl}}{\longleftrightarrow}A$. The spaces $\mathbb{R}^{2n}$
and $\mathbb{R}^{2k}$ are equipped with the standard symplectic forms denoted
by $\sigma_{n}$ and $\sigma_{k}$, respectively. In $\mathbb{R}^{2(n+k)}$ we
have the symplectic form $\sigma_{n+k}=\sigma_{n}\oplus\sigma_{k}$.

Since the operators $A\in\mathcal{L}(\mathcal{S}(\mathbb{R}^{n}),\mathcal{S}%
^{\prime}(\mathbb{R}^{n}))$ are in one-to-one correspondence with their Weyl
symbols $a\in\mathcal{S}^{\prime}(\mathbb{R}^{2n})$, an arbitrary map for
operators can be defined in terms of the corresponding map for symbols. We
then present our main definition

\begin{definition}
\label{Extmap} For arbitrary $k \in\mathbb{N}_{0}$ define the embedding map
\begin{equation}
{E}_{s} :\mathcal{S}^{\prime}(\mathbb{R}^{2n})\longrightarrow\mathcal{S}%
^{\prime}(\mathbb{R}^{2(n+k)});\quad a \longmapsto\widetilde{a}={E}_{s}
[a]=(a\otimes1_{2k})\circ{s} \label{ext3}%
\end{equation}
where $1_{{2k}}:\mathbb{R}^{2k}\longrightarrow\mathbb{R}$ is the
trivial function $1_{{2k}}(y,\eta)=1$ and ${s}$ is a linear
symplectomorphism in $\mathbb{R}^{2(n+k)}$. A symplectic
dimensional extension map $\mathbb{E}_{s}$ is an embedding map
\begin{gather}
{\mathbb{E}}_{s}:\mathcal{L}(\mathcal{S}(\mathbb{R}^{n}),\mathcal{S}^{\prime
}(\mathbb{R}^{n}))\longrightarrow\mathcal{L}(\mathcal{S}(\mathbb{R}%
^{n+k}),\mathcal{S}^{\prime}(\mathbb{R}^{n+k}))\label{ext3bis}\\
\,A\longmapsto\widetilde{A}_{}={\mathbb{E}}_{ {s}}[A]\nonumber
\end{gather}
uniquely defined, for each $E_{s}$, by the following commutative diagram
\begin{equation}%
\begin{array}
[c]{ccccc}
& a & \overset{\text{Weyl}}{<--------->} & A & \\
& | &  & | & \\
{E}_{s} & | &  & | & {\mathbb{E}}_{s}\\
& \downarrow &  & \downarrow & \\
& \widetilde{a} & \overset{{\text{Weyl}}}{<--------->} & \widetilde{A} &
\end{array}
\label{diag}%
\end{equation}

\end{definition}

\begin{remark}
In order to write the symbol $\widetilde{a}$ (\ref{ext3}) more explicitly let
us make the identifications
\[
\mathbb{R}^{2n}=\mathbb{R}_{x}^{n}\times\mathbb{R}_{\xi}^{n} \quad,
\quad\mathbb{R}^{2k} =\mathbb{R}_{y}^{k}\times\mathbb{R}_{\eta}^{k}
\]
\[
\mathbb{R}^{2(n+k)} =\mathbb{R}_{x,y}^{(n+k)}\times\mathbb{R}_{\xi,\eta
}^{(n+k)}
\]
Then for $s=I$ we have in coordinates $\widetilde{a}(x,y;\xi,\eta)=a(x,\xi)$.
In the general case
\begin{equation}
s: \mathbb{R}^{2(n+k)} \longrightarrow\mathbb{R}^{2(n+k)}; (x,y;\xi,\eta)
\longrightarrow(x^{\prime},y^{\prime};\xi^{\prime},\eta^{\prime}%
)=s(x,y;\xi,\eta)
\end{equation}
and
\begin{equation}
\widetilde{a}(x,y;\xi,\eta)=a(x^{\prime}(x,y;\xi,\eta),\xi^{\prime}%
(x,y;\xi,\eta)).
\end{equation}

\end{remark}

The class of operators of the form $\widetilde{A}=\mathbb{E}_{s}[A]$ is quite
large. A few examples are:

\begin{itemize}
\item All partial differential operators $\widetilde{A}=\partial_{
x_{i}}$ on $\mathbb{R}^{m}$ are symplectic dimensional extensions
of the ordinary derivative operator $A=\partial_{x}$ on
$\mathbb{R}$. In this case $n=1$, $k=m-1$, ${s}=I$ and
$a(x,\xi)=i\xi$.

\item The Landau Hamiltonian \cite{lali97}
\begin{equation}
\widetilde{H}_{L}=-(\partial_{x}^{2}+\partial_{y}^{2})+i(x\partial
_{y}-y\partial_{x})+\tfrac{1}{4}(x^{2}+y^{2}) \label{LandauH}%
\end{equation}
which describes the motion of a test particle in the presence of a
magnetic field, is a symplectic dimensional extension of the
harmonic oscillator Hamiltonian $H_{0}= -\partial_{x}^{2}+x^{2}$
(section \ref{secLandau}).

\item The Bopp pseudo-differential operators $\widetilde{A}_{B}:\mathcal{S}%
(\mathbb{R}^{2n})\longrightarrow\mathcal{S}^{\prime}(\mathbb{R}^{2n})$,
formally $\widetilde{A}_{B}=a\star$ where $a\in\mathcal{S}^{\prime}%
(\mathbb{R}^{2n})$ and $\star$ is the Moyal star product, are very
important in the deformation quantization of Bayen et al.
\cite{BFFLS1,BFFLS2,Bopp1}. The operators
$\widetilde{A}_{B}=a\star$ are symplectic dimensional extensions
of the Weyl operators $A\overset{\text{Weyl}}{\longleftrightarrow}
a$ (section \ref{secBopp}).
\end{itemize}

The first part of this paper is devoted to study the relation between the
operators $A \in\mathcal{L}(\mathcal{S}(\mathbb{R}^{n}),\mathcal{S}^{\prime
}(\mathbb{R}^{n}))$ and their dimensional extensions $\widetilde{A}%
=\mathbb{E}_{s}[A]$. We prove several general results about the regularity,
invertibility and spectral properties of $\widetilde{A}$. In particular, we
show that, in the general case, the eigenfunctions of $\widetilde{A}$ can be
completely determined from those of $A$.

In the second part of the paper we define new classes of pseudo-differential
operators, which are extensions of the Shubin classes of globally hypoelliptic
operators. Using the results of the first part, we prove a complete set of
results about the spectral, invertibility and hypoellipticity properties of
the operators in the new classes. Finally, in section \ref{secExa} we present
several examples of operators that belong to the new classes and have
important applications in quantum mechanics and deformation
quantization.\bigskip

\noindent\textbf{Notation}. We will denote by $\mathcal{S}(\mathbb{R}^{m})$
the Schwartz space of rapidly decreasing functions on $\mathbb{R}^{m}$; its
dual $\mathcal{S}^{\prime}(\mathbb{R}^{m})$ is the space of tempered
distributions. The space of linear and continuous operators of the form
$\mathcal{S}(\mathbb{R}^{m})\longrightarrow\mathcal{S}^{\prime}(\mathbb{R}%
^{m})$ is denoted by $\mathcal{L}(\mathcal{S}(\mathbb{R}^{m}),\mathcal{S}%
^{\prime}(\mathbb{R}^{m}))$. The scalar product of two functions $f,g\in
L^{2}(\mathbb{R}^{m})$ is denoted by $(f|g)$ and the corresponding norm by
$||\cdot||$. The distributional bracket is $\langle\,,\,\rangle$. The
Euclidean product of two vectors $x,y$ in $\mathbb{R}^{m}$ is written $x\cdot
y$ and the norm of $x\in\mathbb{R}^{m}$ is $|x|$. The Weyl correspondence is
denoted by $A\overset{\text{Weyl}}{\longleftrightarrow}a$ or
$a\overset{\text{Weyl}}{\longleftrightarrow}A$. The Weyl operators are also
written $A=$ $Op^{w}(a)$. The symplectic form on $\mathbb{R}^{2n}$ is defined
by $\sigma_{n}(z,z^{\prime})=Jz\cdot z^{\prime}$ where $J=%
\begin{pmatrix}
0 & I\\
-I & 0
\end{pmatrix}
$. For $z=(x,\xi)$ and $z^{\prime}=(x^{\prime},\xi^{\prime})$ we have
explicitly $\sigma_{n}(z,z^{\prime})=\xi\cdot x^{\prime}-\xi^{\prime}\cdot x$.
The extension to higher dimensions is obviously $\sigma_{n+k}=\sigma_{n}%
\oplus\sigma_{k}$, that is
\[
\sigma_{n+k}(z^{\prime},u^{\prime};z^{\prime\prime},u^{\prime\prime}%
)=\sigma_{n}(z^{\prime},z^{\prime\prime})+\sigma_{k}(u^{\prime},u^{\prime
\prime})
\]
for $(z^{\prime},u^{\prime}),(z^{\prime\prime},u^{\prime\prime})\in
\mathbb{R}^{2n}\times\mathbb{R}^{2k}$.\bigskip

\section{Symplectic Covariance of Weyl Calculus: Review}

For details and proofs we refer to Folland \cite{Folland}, de Gosson
\cite{Birk,Birkbis}, or Wong \cite{Wong}.

\subsection{Standard Weyl calculus}

In view of Schwartz kernel theorem all operators $A\in\mathcal{L}%
(\mathcal{S}(\mathbb{R}^{n}),\mathcal{S}^{\prime}(\mathbb{R}^{n}))$ admit the
representation
\begin{equation}
A\psi(x)=\langle K_{A}(x,\cdot),\psi(\cdot)\rangle\label{Ker}%
\end{equation}
where $K_{A}\in\mathcal{S}^{\prime}(\mathbb{R}^{n}\times\mathbb{R}^{n})$ and
$\langle\,,\,\rangle$ is the distributional bracket. The Weyl symbol of $A$ is
then
\begin{equation}
a(x,\xi_{x})=\int_{\mathbb{R}^{n}}e^{-i\xi_{x}\cdot y}K_{A}(x+\tfrac{1}%
{2}y,x-\tfrac{1}{2}y)\,dy \label{Ker-Sym}%
\end{equation}
with inverse
\begin{equation}
K_{A}(x,y)=\left(  \frac{1}{2\pi}\right)  ^{n}\int_{\mathbb{R}^{n}}%
e^{i\xi\cdot(x-y)}a(\frac{x+y}{2},\xi)\,d\xi\, . \label{Sym-Ker}%
\end{equation}
These integrals are well defined for $a\in\mathcal{S}(\mathbb{R}^{2n})
\Longleftrightarrow K_{A} \in\mathcal{S}(\mathbb{R}^{2n})$, and should
otherwise be interpreted as generalized Fourier (or inverse Fourier)
transforms (i.e. in the sense of distributions).

Substituting (\ref{Sym-Ker}) in (\ref{Ker}) and writing the distributional
bracket as an integral we obtain the standard formula for Weyl
pseudo-differential operators
\begin{equation}
{A}\psi(x)=\frac{1}{(2\pi)^{n}}\int_{\mathbb{R}^{2n}}e^{i(x-y)\cdot\xi}%
a(\frac{1}{2}(x+y),\xi)\psi(y)\,dy d\xi\,. \label{WeylPDO}%
\end{equation}

\subsection{The metaplectic group}

The symplectic group $\operatorname*{Sp}(2n,\mathbb{R})$ of $\mathbb{R}%
^{2n}\equiv\mathbb{R}^{n}\times\mathbb{R}^{n}$ consists of all linear
automorphisms $s$ of $\mathbb{R}^{2n}$ such that $\sigma_{n}(s(z),s(z^{\prime
}))=\sigma_{n}(z,z^{\prime})$ for all $z,z^{\prime}\in\mathbb{R}^{2n}$. The
group $\operatorname*{Sp}(2n,\mathbb{R})$ is a connected classical Lie group;
it has covering groups of all orders. Its double cover admits a faithful (but
not irreducible) representation by a group of unitary operators on
$L^{2}(\mathbb{R}^{n})$, the metaplectic group $\operatorname*{Mp}%
(2n,\mathbb{R})$. Thus, to every $s\in\operatorname*{Sp}(2n,\mathbb{R})$ one
can associate two unitary operators $S$ and $-S\in\operatorname*{Mp}%
(2n,\mathbb{R})$. One shows (Leray \cite{Leray}, de Gosson \cite{wiley,Birk})
that every element of $\operatorname*{Mp}(2n,\mathbb{R})$ is the product of
exactly two Fourier integral operators of the type
\begin{equation}
S_{W,.m}f(x)=\left(  \tfrac{1}{2\pi i}\right)  ^{n/2}\Delta_{m}(W)\int%
_{\mathbb{R}^{n}}e^{iW(x,x^{\prime})}f(x^{\prime})dx^{\prime} \label{swm}%
\end{equation}
for $f\in\mathcal{S}(\mathbb{R}^{n})$ where%
\[
W(x,x^{\prime})=\tfrac{1}{2}Px\cdot x-Lx\cdot x^{\prime}+\tfrac{1}%
{2}Qx^{\prime}\cdot x^{\prime}%
\]
is a quadratic form with $P=P^{T}$, $Q=Q^{T}$ and $\det L\neq0$, and
\begin{equation}
\Delta_{m}(W)=i^{m}\sqrt{|\det L|} \label{deltaw}%
\end{equation}
where $m$ is an integer (called the Maslov index) corresponding to a choice of
$\arg\det L$. The operator $S_{W,.m}$ belongs itself to $\operatorname*{Mp}%
(2n,\mathbb{R})$ and corresponds to the element $s\in\operatorname*{Sp}%
(2n,\mathbb{R})$ characterized by the condition%
\begin{equation}
(x,\xi)=s(x^{\prime},\xi^{\prime})\Longleftrightarrow\left\{
\begin{array}
[c]{c}%
\xi=\partial_{x}W(x,x^{\prime})\\
\xi^{\prime}=-\partial_{x^{\prime}}W(x,x^{\prime})
\end{array}
\right.  . \label{wgen}%
\end{equation}
It easily follows from the form of the generators (\ref{swm}) of
$\operatorname*{Mp}(2n,\mathbb{R})$ that metaplectic operators are continuous
mappings $\mathcal{S}(\mathbb{R}^{n})\longrightarrow\mathcal{S}(\mathbb{R}%
^{n})$ which extend by duality to continuous mappings $\mathcal{S}^{\prime
}(\mathbb{R}^{n})\longrightarrow\mathcal{S}^{\prime}(\mathbb{R}^{n})$. The
inverse of the operator $S_{W,m}$ is given by $S_{W,m}^{-1}=S_{W,m}^{\ast
}=S_{W^{\ast},m^{\ast}}$ where $W^{\ast}(x,x^{\prime})=-W(x^{\prime},x)$ and
$m^{\ast}=n-m$.

\subsection{Symplectic covariance}

The following important property is characteristic of Weyl pseudo-differential calculus:

\begin{proposition}
\label{prop1}Let $a\in\mathcal{S}^{\prime}(\mathbb{R}^{2n})$ and let
$A=\operatorname*{Op}^{w}(a)$ be the corresponding Weyl pseudo-differential
operator. Let $s\in\operatorname*{Sp}(2n,\mathbb{R)}$. We have%
\begin{equation}
\operatorname*{Op}\nolimits^{w}(a\circ s)=S^{-1}\operatorname*{Op}%
\nolimits^{w}(a)S \label{syco1}%
\end{equation}
where $S$ is anyone of the elements of $\operatorname*{Mp}(2n,\mathbb{R})$
corresponding to $s$.
\end{proposition}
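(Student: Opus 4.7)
The plan is to reduce the identity (\ref{syco1}) to the case where $S$ is one of the generators $S_{W,m}$ described in (\ref{swm}), and then to verify it by a direct oscillatory integral computation.

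First I would reduce to $a\in\mathcal{S}(\mathbb{R}^{2n})$ by weak-$\ast$ continuity. Both sides of (\ref{syco1}) depend continuously on $a\in\mathcal{S}^{\prime}(\mathbb{R}^{2n})$: the pull-back $a\mapsto a\circ s$ is a topological automorphism of $\mathcal{S}^{\prime}(\mathbb{R}^{2n})$, the Weyl correspondence is continuous by (\ref{Ker-Sym})--(\ref{Sym-Ker}), and conjugation by the fixed operators $S, S^{-1}$ (which act continuously on $\mathcal{S}$ and $\mathcal{S}^{\prime}$) is continuous. Moreover the sign ambiguity between the two metaplectic lifts $\pm S$ of $s$ cancels in the conjugation $S^{-1}(\cdot)S$, so the right-hand side is well-defined as a function of $s$ alone.

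Next I would observe that (\ref{syco1}) is multiplicative in $s$: if it holds for $s_1, s_2$ with lifts $S_1, S_2$, then
\begin{equation*}
(S_{2}S_{1})^{-1}\operatorname*{Op}\nolimits^{w}(a)(S_{2}S_{1})=S_{1}^{-1}\operatorname*{Op}\nolimits^{w}(a\circ s_{2})S_{1}=\operatorname*{Op}\nolimits^{w}((a\circ s_{2})\circ s_{1})=\operatorname*{Op}\nolimits^{w}(a\circ s_{2}s_{1}).
\end{equation*}
Since every element of $\operatorname*{Mp}(2n,\mathbb{R})$ is the product of two generators of the form $S_{W,m}$, it suffices to prove (\ref{syco1}) when $S=S_{W,m}$. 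For this case I would substitute the representation (\ref{swm}) of $S$ and $S^{-1}=S_{W^{\ast},m^{\ast}}$ into $(S^{-1}\operatorname*{Op}\nolimits^{w}(a) S)\psi(x)$ together with the Weyl formula (\ref{WeylPDO}) for the middle factor. The outcome is a multivariate oscillatory integral whose phase couples the quadratic forms $W(x,u)$, $W^{\ast}(v,y)=-W(y,v)$ with the Weyl phase $(u-v)\cdot\eta$. Exact Gaussian integration in the intermediate position variables $u,v$ (the phase being quadratic in them), followed by the change of variables in $\eta$ dictated by the generating-function relations (\ref{wgen}), collapses the result into the Weyl formula for $a\circ s$. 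The Maslov prefactors $\Delta_{m}(W)\Delta_{m^{\ast}}(W^{\ast})$ combine with the Jacobian of the substitution to give $1$, using $\det s = 1$ and $m^{\ast}=n-m$.

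The main obstacle is this last bookkeeping step: one has to verify that the stationary point of the phase, combined with the substitution to the image coordinates $(x^{\prime},\xi^{\prime})=s(x^{\prime\prime},\xi^{\prime\prime})$ prescribed by (\ref{wgen}), produces exactly the argument shift $z\mapsto sz$ inside $a$ with no residual phase from the Maslov factors. A more conceptual alternative, which avoids the explicit Gaussian, is to use the pairing identity $(\operatorname*{Op}\nolimits^{w}(a)\psi\mid\phi)=\langle a,\overline{W(\psi,\phi)}\rangle$ with the symplectic covariance of the Wigner transform $W(S\psi,S\phi)=W(\psi,\phi)\circ s^{-1}$: then (\ref{syco1}) follows by a single change of variables $z\mapsto sz$ in the pairing integral, shifting the computational burden to proving the covariance of the Wigner transform itself (which in turn again reduces to checking the generators $S_{W,m}$).
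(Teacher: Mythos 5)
The paper does not actually prove Proposition~\ref{prop1}; it cites the standard references (Folland, de Gosson, Wong) for this fact. So there is no ``paper's own proof'' to compare against, and I will evaluate your plan on its own merits.

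Your outline is correct and is essentially the textbook proof found in those references. The reduction to Schwartz symbols by continuity is legitimate, the multiplicativity computation
\begin{equation*}
(S_{2}S_{1})^{-1}\operatorname*{Op}\nolimits^{w}(a)(S_{2}S_{1})
= S_{1}^{-1}\operatorname*{Op}\nolimits^{w}(a\circ s_{2})S_{1}
= \operatorname*{Op}\nolimits^{w}(a\circ s_{2}s_{1})
\end{equation*}
is exact, and the reduction to the generators $S_{W,m}$ is what the Leray factorization in Section~2.2 is for. The direct Gaussian computation for a generator is doable but is easily the bulkiest part; your own ``more conceptual alternative'' via the pairing identity and the Wigner covariance $W(S\psi,S\phi)=W(\psi,\phi)\circ s^{-1}$ is the cleaner route and is closer in spirit to what this paper actually uses downstream (cf.\ equation~(\ref{atildabis})--(\ref{atilda5}), which is precisely that pairing in the extended setting). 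One small point worth tightening: the observation that the sign ambiguity $\pm S$ ``cancels in the conjugation'' is true but should be used to justify that the \emph{statement} makes sense for any lift, not to claim that the right-hand side depends only on $s$ a priori — that dependence is something your argument establishes, not something it may assume. Also, be consistent between the sesquilinear pairing $(\cdot\mid\cdot)$ and the bilinear distributional bracket when you invoke $(\operatorname*{Op}^{w}(a)\psi\mid\phi)=\langle a,\overline{W(\psi,\phi)}\rangle$: with the paper's conventions (equation~(\ref{atildater})) the conjugate sits on $\Phi$ rather than on $W$. Neither issue is a gap in the argument, only in the exposition.
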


This property has the following essential consequence for the
symplectic dimensional extensions. We will denote by
$\widetilde{S}$ the elements of
$\operatorname*{Mp}(2(n+k),\mathbb{R})$ because these operators
act on the "extended" space $L^{2}(\mathbb{R}^{n+k})$.

\begin{proposition}
\label{prop2}Let ${s}\in\operatorname*{Sp}(2(n+k),\mathbb{R)}$ and
$a\in\mathcal{S}^{\prime}(\mathbb{R}^{2n})$. Let $\widetilde{A}_{I}%
={\mathbb{E}}_{I}[A]$ and $\widetilde{A}_{{s}}={\mathbb{E}}_{s}[A]$ be the
corresponding symplectic dimensional extensions of $A=\operatorname*{Op}\nolimits^{w}%
(a)$. We have%
\begin{equation}
\widetilde{A}_{{s}}=\widetilde{S}^{-1}\widetilde{A}_{I}\widetilde{S}
\label{syco1tilde}%
\end{equation}
where $\widetilde{S}\in\operatorname*{Mp}(2(n+k),\mathbb{R)}$ is any of the
two metaplectic operators that projects onto ${s}$.
\end{proposition}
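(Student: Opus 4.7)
The plan is to reduce Proposition \ref{prop2} to a direct application of the symplectic covariance formula \eqref{syco1} from Proposition \ref{prop1}, applied in dimension $2(n+k)$ rather than $2n$. The only real content is bookkeeping: one must check that the symbol of $\widetilde{A}_s$ is exactly the composition of the symbol of $\widetilde{A}_I$ with $s$, so that Proposition \ref{prop1} fires on the nose.

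Concretely, I would first unfold Definition \ref{Extmap}. By \eqref{ext3}, the Weyl symbol of $\widetilde{A}_I = \mathbb{E}_I[A]$ is
\[
\widetilde{a}_I = (a\otimes 1_{2k})\circ I = a\otimes 1_{2k} \in \mathcal{S}'(\mathbb{R}^{2(n+k)}),
\]
while the Weyl symbol of $\widetilde{A}_s = \mathbb{E}_s[A]$ is
\[
\widetilde{a}_s = (a\otimes 1_{2k})\circ s = \widetilde{a}_I \circ s.
\]
Thus $\widetilde{a}_s$ is obtained from $\widetilde{a}_I$ by precomposition with the symplectomorphism $s \in \operatorname{Sp}(2(n+k),\mathbb{R})$.

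Next I would invoke the symplectic covariance of the Weyl calculus (Proposition \ref{prop1}) in the ambient space $\mathbb{R}^{2(n+k)}$, applied to the symbol $\widetilde{a}_I$ and the symplectomorphism $s$. This gives
\[
\operatorname{Op}^w(\widetilde{a}_I \circ s) = \widetilde{S}^{-1}\, \operatorname{Op}^w(\widetilde{a}_I)\, \widetilde{S}
\]
for any $\widetilde{S} \in \operatorname{Mp}(2(n+k),\mathbb{R})$ projecting onto $s$. Translating back via the Weyl correspondence and the diagram \eqref{diag}, the left-hand side is $\widetilde{A}_s$ and the right-hand side is $\widetilde{S}^{-1}\widetilde{A}_I \widetilde{S}$, which is exactly \eqref{syco1tilde}.

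There is really no hard step here; the only thing to be careful about is that Proposition \ref{prop1} is stated for symbols in $\mathcal{S}'(\mathbb{R}^{2m})$ for any $m$, so its application with $m=n+k$ to the tempered distribution $\widetilde{a}_I = a\otimes 1_{2k}$ is legitimate, and the two-valued choice of $\widetilde{S}$ is irrelevant since $\widetilde{S}$ and $-\widetilde{S}$ yield the same conjugation. The essential conceptual point is simply that the extension map $\mathbb{E}_s$ factors as $\mathbb{E}_s = (\text{conjugation by }\widetilde{S}) \circ \mathbb{E}_I$, which is an immediate consequence of the factorization $\widetilde{a}_s = \widetilde{a}_I \circ s$ at the symbol level.
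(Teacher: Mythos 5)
Your proof is correct and follows exactly the route of the paper: unfold the definition to see that $\widetilde{a}_I = a\otimes 1_{2k}$ and $\widetilde{a}_s = \widetilde{a}_I\circ s$, then apply the symplectic covariance formula of Proposition \ref{prop1} in the ambient space $\mathbb{R}^{2(n+k)}$.
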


\begin{proof}
In view of the definition of the dimensional extensions $\widetilde{A}%
_{I}={\mathbb{E}}_{I}[A]$ and $\widetilde{A}_{{s}}={\mathbb{E}}_{ {s}}[A]$
formula (\ref{syco1tilde}) is equivalent to
\begin{equation}
\operatorname*{Op}\nolimits^{w}[(a\otimes1_{2k})\circ{s}]=\widetilde{S}%
^{-1}\operatorname*{Op}\nolimits^{w}((a\otimes1_{2k}))\widetilde{S}
\label{syco2}%
\end{equation}
and the latter is a straightforward consequence of Proposition \ref{prop1}.
\end{proof}

From this property it is easy to deduce some continuity properties for the
operators $\widetilde{A}_{{s}}$ knowing those of $\widetilde{A}_{I}$. Suppose
for instance that $\widetilde{A}_{I}$ is bounded on $L^{2}(\mathbb{R}^{n+k})$
then so is $\widetilde{A}_{{s}}$ since the metaplectic operators
$\widetilde{S}\in\operatorname*{Mp}(2(n+k),\mathbb{R)}$ are bounded on
$L^{2}(\mathbb{R}^{n+k})$. It actually suffices in this case to assume that
the operator $A$ is bounded on $L^{2}(\mathbb{R}^{n})$ as will follow from the
intertwining results of the next section.

\section{Properties of the Dimensional Extensions}

\subsection{A redefinition of $\protect\widetilde{A}$}

We begin by making a few general observations. Assume that $a\in
\mathcal{S}(\mathbb{R}^{2n})$, $A \overset{\text{Weyl}}{\longleftrightarrow}
a$ and let $\widetilde{A}={\mathbb{E}}_{I}[A]$. Then, for $\Psi\in
\mathcal{S}(\mathbb{R}^{n+k})$ we have from (\ref{WeylPDO})%
\begin{equation}
\widetilde{A}\Psi(x,y)=\left(  \tfrac{1}{2\pi}\right)  ^{n+k}\int%
_{\mathbb{R}^{2(n+k)}}e^{i[(x-x^{\prime})\xi+(y-y^{\prime})\eta]}a(\tfrac
{1}{2}(x+x^{\prime}),\xi)\Psi(x^{\prime},y^{\prime})dx^{\prime}dy^{\prime}d\xi
d\eta\label{atilda}%
\end{equation}
where the integral in $\eta$ is viewed as the inverse Fourier transform of
$1$; we thus have
\begin{equation}
\widetilde{A}\Psi(x,y)=\left(  \tfrac{1}{2\pi}\right)  ^{n}\int_{\mathbb{R}%
^{2n}}e^{i(x-x^{\prime})\xi}a(\tfrac{1}{2}(x+x^{\prime}),\xi)\Psi(x^{\prime
},y)dx^{\prime}d\xi\label{form}%
\end{equation}
which we can write in compact form as%
\begin{equation}
\widetilde{A}\Psi(\cdot,y)=A\psi_{y}\text{ \ with \ }\psi_{y}=\Psi(\cdot,y).
\label{aa1}%
\end{equation}
In particular, if $\psi\in\mathcal{S}(\mathbb{R}^{n})$ and $\phi\in
\mathcal{S}(\mathbb{R}^{k})$ then%
\begin{equation}
\widetilde{A}(\psi\otimes\phi)=(A\psi)\otimes\phi. \label{aa2}%
\end{equation}

These results can be extended to the more general case $a\in\mathcal{S}%
^{\prime}(\mathbb{R}^{2n})$. In view of formula (\ref{Ker}) we have
\begin{equation}
\widetilde{A}\Psi(z)=\langle K_{\widetilde{A}}(z,\cdot),\Psi(\cdot
)\rangle\label{Ext-Ker}%
\end{equation}
where $z=(x,y)$ and $K_{\widetilde{A}} \in\mathcal{S}^{\prime}(\mathbb{R}%
^{n+k}\times\mathbb{R}^{n+k})$ is given by
\begin{align}
K_{\widetilde{A}}(x,y;x^{\prime},y^{\prime})  &  =\left(  \tfrac{1}{2\pi
}\right)  ^{n+k}\int_{\mathbb{R}^{n+k}}e^{i\xi\cdot(x-x^{\prime}%
)+i\eta(y-y^{\prime})}a(\frac{x+x^{\prime}}{2},\xi)\,d\xi d\eta\nonumber\\
&  =K_{A}(x;x^{\prime})\delta(y-y^{\prime}) \label{Sym-ExtKer}%
\end{align}
where the Fourier transform is interpreted in the sense of distributions and
$\delta$ is the Dirac measure. It follows from (\ref{Sym-ExtKer}) that
\begin{align}
\widetilde{A}\Psi(x,y)  &  =\langle K_{A}(x,\cdot),\Psi(\cdot,y)\rangle
\nonumber\\
&  =\langle K_{A}(x,\cdot),\psi_{y}(\cdot)\rangle=A\psi_{y}(x) \label{aa3}%
\end{align}
where, once again, $\psi_{y}=\Psi(\cdot,y)$. In particular, if $\Psi
=\psi\otimes\phi$ with $\psi\in\mathcal{S}(\mathbb{R}^{n})$ and $\phi
\in\mathcal{S}(\mathbb{R}^{k})$, we have again
\begin{equation}
\widetilde{A}(\psi\otimes\phi)=(A\psi)\otimes\phi\,. \label{aa4}%
\end{equation}
This intertwining relation will be extended to the general case $\widetilde{A}%
=\mathbb{E}_{s}[A]$ in the next section.

Another interesting formulation of $\widetilde{A}=\mathbb{E}_{I}[A]$, also
valid in the case $a\in\mathcal{S}^{\prime}(\mathbb{R}^{2n})$, is as follows.
For $\Psi,\Phi\in\mathcal{S}(\mathbb{R}^{n+k})$ the cross-Wigner transform is
defined by \cite{Folland,Birk,Birkbis}%
\begin{align}
\label{wig1}W(\Psi,\Phi)(x,y;\xi,\eta)  &  =\left(  \tfrac{1}{2\pi}\right)
^{n+k}\int_{\mathbb{R}^{n}\times\mathbb{R}^{k}}e^{-i(\xi\cdot x^{\prime}%
+\eta\cdot y^{\prime})}\\
&  \times\Psi((x,y)+\tfrac{1}{2}(x^{\prime},y^{\prime}))\overline
{\Phi((x,y)-\tfrac{1}{2}(x^{\prime},y^{\prime}))}dx^{\prime}dy^{\prime
}\nonumber
\end{align}
and formula (\ref{atilda}) is equivalent to%
\begin{equation}
(\widetilde{A}\Psi|\Phi)_{L^{2}(\mathbb{R}^{n+k})}=\int_{\mathbb{R}^{2(n+k)}%
}a(x,\xi)W(\Psi,\Phi)(x,y;\xi,\eta)dxdyd\xi d\eta. \label{atildabis}%
\end{equation}
Note that since $W(\Psi,\Phi)\in\mathcal{S}(\mathbb{R}^{2(n+k)})$ the integral
above makes sense not only when $a\in\mathcal{S}(\mathbb{R}^{2n})$, but also
when $a$ is measurable and does not increase too fast at infinity. In fact,
viewing the integral as a distributional bracket, we can rewrite
(\ref{atildabis}) as%
\begin{equation}
\langle\widetilde{A}\Psi,\overline{\Phi}\rangle=\langle\widetilde{a}%
,W(\Psi,\Phi)\rangle\label{atildater}%
\end{equation}
which makes sense for every $\widetilde{a}=E_{I}[a]\in\mathcal{S}^{\prime
}(\mathbb{R}^{2(n+k)})$. More generally, we can redefine the operator
$\widetilde{A}_{{s}}=\widetilde{S}^{-1}\widetilde{A}\widetilde{S}%
=\mathbb{E}_{s}[A]$ by the formula%
\begin{equation}
\langle\widetilde{A}_{{s}}\Psi,\overline{\Phi}\rangle=\langle\widetilde{a}%
\circ s,W(\Psi,\Phi)\rangle\label{atilda4}%
\end{equation}
or, equivalently, by%
\begin{equation}
\langle\widetilde{A}_{{s}}\Psi,\overline{\Phi}\rangle=\langle\widetilde{a}%
,W(\widetilde{S}\Psi,\widetilde{S}\Phi)\rangle. \label{atilda5}%
\end{equation}

\subsection{Composition, invertibility and adjoints}

We will need the following result:

\begin{lemma}
\label{lem}If $A$ is a continuous operator $\mathcal{S}(\mathbb{R}%
^{n})\longrightarrow\mathcal{S}(\mathbb{R}^{n})$, then $\widetilde{A}%
=\mathbb{E}_{s}[A]$ is a continuous operator $\mathcal{S}(\mathbb{R}%
^{n+k})\longrightarrow\mathcal{S}(\mathbb{R}^{n+k}).$
\end{lemma}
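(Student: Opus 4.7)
The plan is to split the proof in two clean steps: first reduce to the case $s=I$ using the symplectic covariance already established, and then verify that $\widetilde{A}_I$ preserves $\mathcal{S}(\mathbb{R}^{n+k})$ by a direct seminorm estimate using the pointwise formula (\ref{aa3}).

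For the reduction, Proposition \ref{prop2} gives $\widetilde{A}_s = \widetilde{S}^{-1}\widetilde{A}_I\widetilde{S}$ with $\widetilde{S}\in\operatorname{Mp}(2(n+k),\mathbb{R})$. Since metaplectic operators and their inverses are continuous mappings $\mathcal{S}(\mathbb{R}^{n+k})\to \mathcal{S}(\mathbb{R}^{n+k})$ (as recalled in the metaplectic subsection), the continuity of $\widetilde{A}_s$ on $\mathcal{S}(\mathbb{R}^{n+k})$ will follow from that of $\widetilde{A}_I$.

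For $\widetilde{A}_I$, formula (\ref{aa3}) says $\widetilde{A}_I\Psi(x,y)=A\psi_y(x)$ with $\psi_y=\Psi(\cdot,y)\in\mathcal{S}(\mathbb{R}^{n})$. Because $A$ acts only in $x$, it commutes with multiplication by $y^{\beta}$ and with $\partial_y^{\delta}$, yielding
\[
x^{\alpha}\partial_{x}^{\gamma}\bigl(y^{\beta}\partial_{y}^{\delta}\widetilde{A}_I\Psi(x,y)\bigr)=x^{\alpha}\partial_{x}^{\gamma}A\bigl[y^{\beta}\partial_{y}^{\delta}\Psi(\cdot,y)\bigr](x).
\]
The continuity of $A:\mathcal{S}(\mathbb{R}^{n})\to\mathcal{S}(\mathbb{R}^{n})$ means that for every seminorm $\|f\|_{\alpha\gamma}=\sup_{x}|x^{\alpha}\partial_{x}^{\gamma}f(x)|$ there exist a constant $C>0$ and finitely many multi-indices $(\alpha'_j,\gamma'_j)$ with $\|Af\|_{\alpha\gamma}\le C\sum_{j}\|f\|_{\alpha'_j\gamma'_j}$. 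Applying this to $f=y^{\beta}\partial_{y}^{\delta}\Psi(\cdot,y)$ and then taking the supremum over $y$, one obtains the seminorm bound
\[
\sup_{x,y}\bigl|x^{\alpha}y^{\beta}\partial_{x}^{\gamma}\partial_{y}^{\delta}\widetilde{A}_I\Psi(x,y)\bigr|\le C\sum_{j}\sup_{x,y}\bigl|x^{\alpha'_j}y^{\beta}\partial_{x}^{\gamma'_j}\partial_{y}^{\delta}\Psi(x,y)\bigr|,
\]
which is exactly the continuity estimate for $\widetilde{A}_I$ in the Fréchet topology of $\mathcal{S}(\mathbb{R}^{n+k})$.

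The main technical point is the commutation $\partial_{y}^{\delta}A=A\partial_{y}^{\delta}$, which requires interpreting $y\mapsto\Psi(\cdot,y)$ as a $C^{\infty}$ map into the Fréchet space $\mathcal{S}(\mathbb{R}^{n})$ and pulling derivatives through $A$ using its continuity there; this is a standard argument but the only place care is needed. A clean alternative, which avoids this point entirely, is to use that $\mathcal{S}(\mathbb{R}^{n+k})\cong\mathcal{S}(\mathbb{R}^{n})\,\hat{\otimes}\,\mathcal{S}(\mathbb{R}^{k})$ as a nuclear topological tensor product: the tensor product operator $A\otimes\mathrm{id}$ is continuous on this space, it coincides with $\widetilde{A}_I$ on elementary tensors by (\ref{aa4}), and agreement on the dense subspace of finite sums of such tensors then forces $\widetilde{A}_I=A\otimes\mathrm{id}$ globally.
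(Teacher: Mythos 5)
Your proof is correct and follows the same strategy the paper uses: reduce to $s=I$ by the metaplectic conjugation of Proposition \ref{prop2}, then observe that $\widetilde{A}_I$ acts fibrewise in $x$ via $\widetilde{A}_I\Psi(\cdot,y)=A\Psi(\cdot,y)$ (formulas (\ref{aa1}), (\ref{aa3})). The paper is terse at the second step, simply citing (\ref{aa1}); your seminorm estimate (together with the remark on interpreting $y\mapsto\Psi(\cdot,y)$ as a smooth $\mathcal{S}(\mathbb{R}^{n})$-valued map, or the cleaner $\mathcal{S}(\mathbb{R}^{n+k})\cong\mathcal{S}(\mathbb{R}^{n})\,\hat{\otimes}\,\mathcal{S}(\mathbb{R}^{k})$ identification) supplies exactly the details the paper leaves implicit.
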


\begin{proof}
We first remark that it is sufficient to prove this result for ${s}=I$. This
follows from the conjugation formula (\ref{syco1tilde}) in Proposition
\ref{prop2}: if $\widetilde{A}:\mathcal{S}(\mathbb{R}^{n+k})\longrightarrow
\mathcal{S}(\mathbb{R}^{n+k})$ then $\widetilde{S}^{-1}\widetilde{A}%
\widetilde{S}:\mathcal{S}(\mathbb{R}^{n+k})\longrightarrow\mathcal{S}%
(\mathbb{R}^{n+k})$ since metaplectic operators preserve the Schwartz classes;
the continuity statement follows likewise since metaplectic operators are
unitary. Assume that $A$ is a continuous operator $\mathcal{S}(\mathbb{R}%
^{n})\longrightarrow\mathcal{S}(\mathbb{R}^{n})$, and let $\widetilde{A}%
=\mathbb{E}_{I}[A]$ and $\Psi\in\mathcal{S}(\mathbb{R}^{n+k}) $. Then, in view
of formula (\ref{aa1}), $\widetilde{A}$ is also continuous and $\widetilde{A}%
\Psi\in\mathcal{S}(\mathbb{R}^{n+k})$.
\end{proof}

Recall that if $A\overset{\text{Weyl}}{\longleftrightarrow}a$ then
the formal adjoint of $A$ corresponds to the complex conjugate of
$a$, that is we have
$A^{\ast}\overset{\text{Weyl}}{\longleftrightarrow}\overline{a}$.
In particular $A$ is (formally) self-adjoint if and only if its
Weyl symbol is real. This property carries over to the case of
symplectic dimensionally extended operators:

\begin{proposition}
\label{propSA} We have:
\begin{equation}
{\mathbb{E}}_{s}[A]^{\ast}={\mathbb{E}}_{ {s}}[A^{\ast}]\,. \label{asas}%
\end{equation}
Hence, in particular, $\widetilde{A}=\mathbb{E}_{s}[A]$ is (formally)
self-adjoint if and only if $A$ is.
\end{proposition}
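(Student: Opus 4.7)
The plan is to first handle the special case $s=I$ by an easy Weyl-symbol computation, and then bootstrap to arbitrary $s$ using the symplectic-covariance conjugation formula of Proposition \ref{prop2}.

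For $s=I$, Definition \ref{Extmap} says $\mathbb{E}_{I}[A]\overset{\text{Weyl}}{\longleftrightarrow}a\otimes 1_{2k}$. The formal adjoint of a Weyl operator corresponds to the complex conjugate of its symbol, so
\[
\mathbb{E}_{I}[A]^{\ast}\overset{\text{Weyl}}{\longleftrightarrow}\overline{a\otimes 1_{2k}}.
\]
Because $1_{2k}$ is real-valued, $\overline{a\otimes 1_{2k}}=\overline{a}\otimes 1_{2k}$, and this is exactly the Weyl symbol of $\mathbb{E}_{I}[A^{\ast}]$ (since $A^{\ast}\overset{\text{Weyl}}{\longleftrightarrow}\overline{a}$). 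The Weyl correspondence being a bijection on $\mathcal{S}'(\mathbb{R}^{2(n+k)})$, we conclude $\mathbb{E}_{I}[A]^{\ast}=\mathbb{E}_{I}[A^{\ast}]$.

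For arbitrary $s\in\operatorname*{Sp}(2(n+k),\mathbb{R})$, Proposition \ref{prop2} gives $\mathbb{E}_{s}[A]=\widetilde{S}^{-1}\mathbb{E}_{I}[A]\widetilde{S}$ with $\widetilde{S}\in\operatorname*{Mp}(2(n+k),\mathbb{R})$ projecting onto $s$. Since metaplectic operators are unitary on $L^{2}(\mathbb{R}^{n+k})$, we have $\widetilde{S}^{\ast}=\widetilde{S}^{-1}$, and metaplectic operators preserve $\mathcal{S}(\mathbb{R}^{n+k})$, so the factors compose at the formal-adjoint level without difficulty. Taking adjoints and invoking the $s=I$ case yields
\[
\mathbb{E}_{s}[A]^{\ast}=\widetilde{S}^{-1}\,\mathbb{E}_{I}[A]^{\ast}\,\widetilde{S}=\widetilde{S}^{-1}\,\mathbb{E}_{I}[A^{\ast}]\,\widetilde{S}=\mathbb{E}_{s}[A^{\ast}],
\]
the last equality being Proposition \ref{prop2} applied to $A^{\ast}$ in place of $A$. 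The (formal) self-adjointness equivalence then follows since $\mathbb{E}_{s}$ is injective: $\mathbb{E}_{s}[A]=\mathbb{E}_{s}[A]^{\ast}=\mathbb{E}_{s}[A^{\ast}]$ iff $A=A^{\ast}$.

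I do not anticipate any serious obstacle. The only two small points requiring care are (i) the observation that $1_{2k}$ is real, so that complex conjugation passes through the tensor product $a\otimes 1_{2k}$, and (ii) the legitimacy of the adjoint manipulation on the composition $\widetilde{S}^{-1}\mathbb{E}_{I}[A]\widetilde{S}$, which is justified by the continuity of metaplectic operators on $\mathcal{S}(\mathbb{R}^{n+k})$ recalled in Section~2. Everything else is symbolic book-keeping using Proposition \ref{prop2} and the standard Weyl-adjoint rule.
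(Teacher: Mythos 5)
Your proof is correct, but it takes a longer route than the paper for the general-$s$ step. The paper argues entirely at the symbol level: since $\mathbb{E}_{s}[A]\overset{\text{Weyl}}{\longleftrightarrow}(a\otimes 1_{2k})\circ s$ and the Weyl adjoint corresponds to the conjugate symbol, one simply notes that complex conjugation passes through both the tensor product with the real function $1_{2k}$ and the composition with the real linear map $s$, so $\overline{(a\otimes 1_{2k})\circ s}=(\overline{a}\otimes 1_{2k})\circ s$, which is the symbol of $\mathbb{E}_{s}[A^{\ast}]$; this settles arbitrary $s$ in one line, with no appeal to Proposition \ref{prop2}. Your argument reproduces exactly this symbol computation for $s=I$, but then handles general $s$ at the operator level, via the conjugation formula $\mathbb{E}_{s}[A]=\widetilde{S}^{-1}\mathbb{E}_{I}[A]\widetilde{S}$, unitarity $\widetilde{S}^{\ast}=\widetilde{S}^{-1}$, and the formal-adjoint identity for the triple composition (legitimate here since $\widetilde{S}$ preserves $\mathcal{S}(\mathbb{R}^{n+k})$, so all compositions and pairings are well defined). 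What your route buys is that the adjoint bookkeeping is made explicit and is consistent with how the paper itself later manipulates adjoints (e.g.\ in the proof of Proposition \ref{proptic1}); what it costs is dependence on the metaplectic machinery where a purely symbolic observation suffices, namely that conjugation commutes with pullback by the real map $s$. Your closing injectivity remark for the self-adjointness equivalence is fine and slightly more explicit than the paper's ``hence, in particular''.
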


\begin{proof}
This property immediately follows from the definitions since we have
$\mathbb{E}_{s}[A]\overset{\text{Weyl}}{\longleftrightarrow}(a\otimes
1_{2k})\circ{s}$ and noting that the complex conjugate of $(a\otimes
1_{2k})\circ{s}$ is $(\overline{a}\otimes1_{2k})\circ{s}$.
\end{proof}

The proof of the following composition property is slightly more technical; it
shows that the extension mapping ${\mathbb{E}}_{s}$ preserves products of operators:

\begin{proposition}
Let $A\overset{\text{Weyl}}{\longleftrightarrow}a$ and $B\overset{\text{Weyl}%
}{\longleftrightarrow}b$ and assume that $B:\mathcal{S}(\mathbb{R}%
^{n})\longrightarrow\mathcal{S}(\mathbb{R}^{n})$. In this case $A$ and $B$ can
be composed. Then $\widetilde{A}={\mathbb{E}}_{s}[A]$ and $\widetilde{B}%
={\mathbb{E}}_{s}[B]$ can also be composed and we have:
\begin{equation}
{\mathbb{E}}_{s}[AB]={\mathbb{E}}_{{s}}[A]\, {\mathbb{E}}_{s}[B].
\label{absab}%
\end{equation}

\end{proposition}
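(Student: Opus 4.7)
My plan is to reduce the statement to the case $s = I$ by invoking the symplectic covariance formula (\ref{syco1tilde}) of Proposition \ref{prop2}. If we know $\mathbb{E}_I[AB] = \mathbb{E}_I[A]\,\mathbb{E}_I[B]$, then inserting $\widetilde{S}\widetilde{S}^{-1} = \mathrm{Id}$ between the two factors gives
\[
\mathbb{E}_s[AB] = \widetilde{S}^{-1}\mathbb{E}_I[AB]\widetilde{S} = \widetilde{S}^{-1}\mathbb{E}_I[A]\widetilde{S}\,\widetilde{S}^{-1}\mathbb{E}_I[B]\widetilde{S} = \mathbb{E}_s[A]\,\mathbb{E}_s[B],
\]
which is exactly (\ref{absab}). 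So the real content lies in handling $s = I$.

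For the case $s = I$, I would use the intertwining formula (\ref{aa3}): for any $\Psi \in \mathcal{S}(\mathbb{R}^{n+k})$ and any $y \in \mathbb{R}^k$, writing $\psi_y = \Psi(\cdot,y)$, one has $\widetilde{A}\Psi(x,y) = A\psi_y(x)$ and likewise for $\widetilde{B}$. Since $B:\mathcal{S}(\mathbb{R}^n) \to \mathcal{S}(\mathbb{R}^n)$, Lemma \ref{lem} guarantees $\widetilde{B}:\mathcal{S}(\mathbb{R}^{n+k}) \to \mathcal{S}(\mathbb{R}^{n+k})$, so $\widetilde{A}\widetilde{B}$ is well-defined on $\mathcal{S}(\mathbb{R}^{n+k})$. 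Setting $\Phi = \widetilde{B}\Psi$, one has $\Phi(\cdot,y) = B\psi_y$ for each $y$, and hence
\[
(\widetilde{A}\widetilde{B}\Psi)(x,y) = \widetilde{A}\Phi(x,y) = A(B\psi_y)(x) = (AB)\psi_y(x) = \widetilde{AB}\,\Psi(x,y),
\]
where the last equality is again (\ref{aa3}) applied to the composed operator $AB$, which makes sense because $B:\mathcal{S}\to\mathcal{S}$ lets us compose $A$ with $B$.

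The point I expect to be most delicate is verifying that the chain of distributional identities is legitimate even when $a \in \mathcal{S}'(\mathbb{R}^{2n})$, since in that generality $\widetilde{A}$ is only defined as a map $\mathcal{S}(\mathbb{R}^{n+k}) \to \mathcal{S}'(\mathbb{R}^{n+k})$. The argument above is fine because the intermediate object $\widetilde{B}\Psi$ lives in $\mathcal{S}(\mathbb{R}^{n+k})$ by Lemma \ref{lem}, so $\widetilde{A}$ can be applied to it and the pointwise reading of (\ref{aa3}) is to be interpreted via the Schwartz kernel $K_A$ as in (\ref{Ext-Ker})--(\ref{aa3}); the fact that the Schwartz kernel of $\widetilde{A}$ factors as $K_A(x,x')\delta(y-y')$ is precisely what makes the $y$-variable a passive parameter in the composition. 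Once this is observed, the equality $\widetilde{A}\widetilde{B}\Psi = \widetilde{AB}\Psi$ holds in $\mathcal{S}'(\mathbb{R}^{n+k})$ for every $\Psi \in \mathcal{S}(\mathbb{R}^{n+k})$, which is the required identity of operators, and the general $s$ case follows from the reduction in the first paragraph.
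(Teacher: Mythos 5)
Your proof is correct, and for the core case $s=I$ it takes a genuinely different route from the paper. Both proofs reduce the general case to $s=I$ in the same way, by conjugating with $\widetilde{S}$ via Proposition \ref{prop2}. Where you diverge is in handling $s=I$: the paper works entirely on the \emph{symbol} side, writing the Weyl symbol of $\widetilde{A}\widetilde{B}$ with the Moyal-type composition formula (\ref{cab1}), splitting $\sigma_{n+k}=\sigma_n\oplus\sigma_k$, and evaluating the extra $u',u''$ integral by Fourier inversion to see that the result is precisely the symbol of $AB$, independent of the extra variables. You instead work on the \emph{operator/kernel} side, using the already-established identity (\ref{aa3}), namely $\widetilde{A}\Psi(\cdot,y)=A\psi_y$ with $\psi_y=\Psi(\cdot,y)$, together with the fact (Lemma \ref{lem}) that $\widetilde{B}\Psi\in\mathcal{S}(\mathbb{R}^{n+k})$, to make $y$ a passive parameter and conclude $\widetilde{A}\widetilde{B}\Psi(\cdot,y)=A(B\psi_y)=(AB)\psi_y=\widetilde{AB}\Psi(\cdot,y)$. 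Your route is more conceptual and avoids the explicit Fourier/oscillatory integral manipulation; it buys you immediate clarity on why the extra variables are inert, at the cost of leaning harder on the already-proved identity (\ref{aa3}) and on Lemma \ref{lem} to certify that the intermediate object $\widetilde{B}\Psi$ is a Schwartz function. The paper's symbol computation, while more explicit, has the advantage of exhibiting directly that $E_I$ is a homomorphism for the Moyal product, which is the algebraic heart of the statement. You correctly flag and resolve the one subtle point, namely that (\ref{aa3}) must be applied three times (to $\widetilde{A}$, $\widetilde{B}$, and $\widetilde{AB}$), and that the middle application needs $\widetilde{B}\Psi\in\mathcal{S}(\mathbb{R}^{n+k})$ so that $\widetilde{A}$ acting on it makes sense even for $a\in\mathcal{S}'(\mathbb{R}^{2n})$.
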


\begin{proof}
That $\widetilde{A}\widetilde{B}$ is well-defined follows from Lemma
\ref{lem}. Recall that the Weyl symbol of the product $C=AB$ is given by the
formula%
\begin{equation}
c(z)=\left(  \tfrac{1}{4\pi}\right)  ^{2n}\int_{\mathbb{R}^{4n}}e^{\frac{i}%
{2}\sigma_{n}(z^{\prime},z^{\prime\prime})}a(z+\tfrac{1}{2}z^{\prime
})b(z-\tfrac{1}{2}z^{\prime\prime})dz^{\prime}dz^{\prime\prime} \label{cab1}%
\end{equation}
(see for instance \cite{Birkbis}, p.148). Let us first assume that ${s}$ is
the identity; then
\begin{align*}
\widetilde{a}(z+\tfrac{1}{2}z^{\prime},u+\tfrac{1}{2}u^{\prime})  &
=a(z+\tfrac{1}{2}z^{\prime})\\
\widetilde{b}(z-\tfrac{1}{2}z^{\prime},u-\tfrac{1}{2}u^{\prime})  &
=b(z-\tfrac{1}{2}z^{\prime})
\end{align*}
and the Weyl symbol $\widetilde{c}$ of $\widetilde{C}=\widetilde{A}%
\widetilde{B}$ is thus given by%
\begin{align*}
\widetilde{c}(z,u)  &  =\left(  \tfrac{1}{4\pi}\right)  ^{2(n+k)}%
\int_{\mathbb{R}^{4(n+k)}}e^{\frac{i}{2}\sigma_{n+k}(z^{\prime},u^{\prime
};z^{\prime\prime},u^{\prime\prime})}a(z+\tfrac{1}{2}z^{\prime})b(z-\tfrac
{1}{2}z^{\prime\prime})dz^{\prime}dz^{\prime\prime}du^{\prime}du^{\prime
\prime}\\
&  =\left(  \tfrac{1}{4\pi}\right)  ^{2(n+k)}\int_{\mathbb{R}^{4k}}e^{\frac
{i}{2}\sigma_{k}(u^{\prime},u^{\prime\prime})}du^{\prime}du^{\prime\prime}%
\int_{\mathbb{R}^{4n}}e^{\frac{i}{2}\sigma_{n}(z^{\prime},z^{\prime\prime}%
)}a(z+\tfrac{1}{2}z^{\prime})b(z-\tfrac{1}{2}z^{\prime\prime})dz^{\prime
}dz^{\prime\prime}\\
&  =\left(  \tfrac{1}{4\pi}\right)  ^{2n}\int_{\mathbb{R}^{4n}}e^{\frac{i}%
{2}\sigma_{n}(z^{\prime},z^{\prime\prime})}a(z+\tfrac{1}{2}z^{\prime
})b(z-\tfrac{1}{2}z^{\prime\prime})dz^{\prime}dz^{\prime\prime}%
\end{align*}
where we have used the relation $\sigma_{n+k}=\sigma_{n}\oplus\sigma_{k}$ and
the Fourier inversion formula to compute the integral in $u^{\prime}%
,u^{\prime\prime}$; thus $\widetilde{c}(z,u)=c(z)$ which proves our claim in
the case ${s}=I$. The general case follows from formula (\ref{syco1tilde}) of
Proposition \ref{prop2}.
\end{proof}

The next result is a corollary of the previous proposition.

\begin{corollary}
\label{CorIN} Let $A^{-1}:\mathcal{S}(\mathbb{R}^{n}) \longrightarrow
\mathcal{S}(\mathbb{R}^{n})$ be the inverse of $A:\mathcal{S}(\mathbb{R}^{n})
\longrightarrow\mathcal{S}(\mathbb{R}^{n})$, i.e. $AA^{-1}=A^{-1}A=I_{n}$
where $I_{n}$ is the identity operator in $\mathcal{S}(\mathbb{R}^{n})$. Then
$\mathbb{E}_{s}[A^{-1}]$ is the inverse of $\mathbb{E}_{s}[A]$, that is
\begin{equation}
\label{Inverse}\mathbb{E}_{s}[A^{-1}]=\left(  \mathbb{E}_{s}[A]\right)  ^{-1}.
\end{equation}

\end{corollary}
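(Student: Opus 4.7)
The plan is to deduce this corollary directly from the composition property $\mathbb{E}_s[AB] = \mathbb{E}_s[A]\,\mathbb{E}_s[B]$ just established. Since by hypothesis both $A$ and $A^{-1}$ are continuous operators $\mathcal{S}(\mathbb{R}^n) \longrightarrow \mathcal{S}(\mathbb{R}^n)$, the previous proposition applies to both compositions $AA^{-1}$ and $A^{-1}A$, yielding
\begin{equation*}
\mathbb{E}_s[A]\,\mathbb{E}_s[A^{-1}] = \mathbb{E}_s[AA^{-1}] = \mathbb{E}_s[I_n], \qquad \mathbb{E}_s[A^{-1}]\,\mathbb{E}_s[A] = \mathbb{E}_s[A^{-1}A] = \mathbb{E}_s[I_n].
\end{equation*}

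The remaining task is therefore to identify $\mathbb{E}_s[I_n]$ with the identity operator $I_{n+k}$ on $\mathcal{S}(\mathbb{R}^{n+k})$. I would argue this at the level of Weyl symbols: the identity $I_n \in \mathcal{L}(\mathcal{S}(\mathbb{R}^n),\mathcal{S}'(\mathbb{R}^n))$ corresponds under the Weyl correspondence to the constant symbol $a \equiv 1$ on $\mathbb{R}^{2n}$ (this is immediate from (\ref{WeylPDO}), where the integral reduces to the Fourier inversion formula). Hence by Definition \ref{Extmap}, the extended symbol is
\begin{equation*}
\widetilde{a} = (1 \otimes 1_{2k}) \circ s = 1_{2(n+k)} \circ s = 1_{2(n+k)},
\end{equation*}
the last equality because $1_{2(n+k)}$ is constant and therefore invariant under the symplectomorphism $s$. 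Applying the Weyl correspondence in the reverse direction on $\mathbb{R}^{2(n+k)}$ gives $\mathbb{E}_s[I_n] = I_{n+k}$.

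Combining these two observations shows that $\mathbb{E}_s[A^{-1}]$ is a two-sided inverse of $\mathbb{E}_s[A]$, which is exactly (\ref{Inverse}). There is no serious obstacle here; the only subtlety worth flagging explicitly is the verification that the identity operator is preserved under $\mathbb{E}_s$, and this is guaranteed by the $s$-invariance of constant functions together with the fact that $a \equiv 1$ is the Weyl symbol of the identity.
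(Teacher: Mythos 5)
Your proof is correct and follows essentially the same route as the paper: invoke the composition property to reduce the claim to showing $\mathbb{E}_s[I_n]=I_{n+k}$. The only (minor) difference is in how that last identity is verified: the paper cites formulas (\ref{syco1tilde}) and (\ref{aa1}) at the operator level, whereas you argue directly at the symbol level that the constant symbol $1$ (the Weyl symbol of the identity) is fixed by $a\mapsto(a\otimes 1_{2k})\circ s$, which is an equally valid and perhaps cleaner way to make the same point.
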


\begin{proof}
We first notice that $\mathbb{E}_{s}[I_{n}]=I_{n+k}$, which follows directly
from (\ref{syco1tilde}) and (\ref{aa1}). Then from (\ref{absab})
\begin{equation}
\mathbb{E}_{s}[A]\mathbb{E}_{s}[A^{-1}]=\mathbb{E}_{s}[AA^{-1}]=\mathbb{E}%
_{s}[I_{n}]=I_{n+k}%
\end{equation}
and likewise $\mathbb{E}_{s}[A^{-1}]\mathbb{E}_{s}[A]=I_{n+k}$. Hence
\[
\mathbb{E}_{s}[A^{-1}]=\left(  \mathbb{E}_{s}[A]\right)  ^{-1}.
\]

\end{proof}

We notice for future reference that the composition and invertibility results
extend by duality to $\mathcal{S}^{\prime}(\mathbb{R}^{n+k})$.

\begin{remark}
\label{remark-duality} If $A:\mathcal{S}(\mathbb{R}^{n}) \longrightarrow
\mathcal{S}(\mathbb{R}^{n})$ is linear and continuous then it extends by
duality to a continuous map that we shall denote by
\begin{equation}
\label{A-extension}A^{\prime}:\mathcal{S}^{\prime}(\mathbb{R}^{n})
\longrightarrow\mathcal{S}^{\prime}(\mathbb{R}^{n}).
\end{equation}
We have, of course, $\left.  A^{\prime}\right|  _{\mathcal{S}(\mathbb{R}^{n}%
)}=A$.

In this case, $\widetilde{A}=\mathbb{E}_{s}[A]$ is also a linear and
continuous map $\mathcal{S}(\mathbb{R}^{n+k}) \longrightarrow\mathcal{S}%
(\mathbb{R}^{n+k})$ (Lemma \ref{lem}) and so it also extends to a continuous
map
\[
\widetilde{A}^{\prime}:\mathcal{S}^{\prime}(\mathbb{R}^{n+k}) \longrightarrow
\mathcal{S}^{\prime}(\mathbb{R}^{n+k})
\]
Since both $A^{\prime}$ and $\widetilde{A}^{\prime}$ are uniquely defined, we
may extend the action of $\mathbb{E}$ to the operators $A^{\prime}%
:\mathcal{S}^{\prime}(\mathbb{R}^{n}) \longrightarrow\mathcal{S}^{\prime
}(\mathbb{R}^{n})$ by setting
\begin{equation}
\label{Ext-E}\mathbb{E}_{s}[A^{\prime}]:= \widetilde{A}^{\prime}.
\end{equation}
Let $B$ be also a continuous map $\mathcal{S}(\mathbb{R}^{n})\longrightarrow
\mathcal{S}(\mathbb{R}^{n})$. It then follows trivially from the definition
(\ref{Ext-E}) that
\begin{equation}
\mathbb{E}_{s}[A^{\prime}B^{\prime}]=\mathbb{E}_{s}[A^{\prime}]\mathbb{E}%
_{s}[B^{\prime}]
\end{equation}
since $A^{\prime}B^{\prime}=(AB)^{\prime}$; and that
\begin{equation}
\mathbb{E}_{s}[A^{\prime-1}]=\left(  \mathbb{E}_{s}[A^{\prime}]\right)  ^{-1}%
\end{equation}
since $A^{\prime-1}=\left(  A^{-1}\right)  ^{\prime}$.

We conclude that the three maps $^{\prime}\, , \, \mathbb{E}_{s}$ and $^{-1}$
commute with each other in the space of linear, continuous and invertible
operators $\mathcal{S}(\mathbb{R}^{n}) \longrightarrow\mathcal{S}%
(\mathbb{R}^{n})$.
\end{remark}

\subsection{Intertwiners}

For $\chi\in\mathcal{S}(\mathbb{R}^{k})$ and $\widetilde{S}\in
\operatorname*{Mp}(2(n+k),\mathbb{R)}$ let us define the operator
$T_{\widetilde{S},\chi}:\mathcal{S}(\mathbb{R}^{n})\longrightarrow
\mathcal{S}(\mathbb{R}^{n+k})$ by the formula%
\begin{equation}
T_{\widetilde{S},\chi}\psi=\widetilde{S}^{-1}(\psi\otimes\chi). \label{tics}%
\end{equation}
These operators $T_{\widetilde{S},\chi}$ have the following analytical properties:

\begin{proposition}
\label{proptic2}The operator $T_{\widetilde{S},\chi}$ extends to a continuous
map $\mathcal{S}^{\prime}(\mathbb{R}^{n})\longrightarrow\mathcal{S}^{\prime
}(\mathbb{R}^{n+k})$ and, if $||\chi||=1$, to a partial isometry
$L^{2}(\mathbb{R}^{n})\longrightarrow L^{2}(\mathbb{R}^{n+k})$.

The adjoint $T_{\widetilde{S},\chi}^{\ast}:L^{2}(\mathbb{R}^{n+k}%
)\longrightarrow L^{2}(\mathbb{R}^{n})$ is given by the formula%
\begin{equation}
T_{\widetilde{S},\chi}^{\ast}\Phi(x)=\int_{\mathbb{R}^{k}}\overline{\chi(y)}
{\widetilde{S}\Phi(x,y)}\, dy \label{tadj}%
\end{equation}
and also extends to the continuous operator $\mathcal{S}^{\prime}%
(\mathbb{R}^{n+k})\longrightarrow\mathcal{S}^{\prime}(\mathbb{R}^{n})$ defined
by
\begin{equation}
\langle T_{\widetilde{S},\chi}^{\ast} \Phi, \psi\rangle=\langle{\widetilde{S}
\Phi} , \overline{\chi} \otimes\psi\rangle\quad, \quad\forall\psi
\in\mathcal{S}(\mathbb{R}^{n}). \label{tadj2}%
\end{equation}

\end{proposition}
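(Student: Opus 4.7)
\textbf{Proof plan for Proposition \ref{proptic2}.} The plan is to reduce everything to the case $\widetilde{S}=I$ and then handle the tensor product with $\chi$, invoking the general mapping properties of metaplectic operators to return to arbitrary $\widetilde{S}$. Recall (Section 2.2) that metaplectic operators are continuous $\mathcal{S}(\mathbb{R}^{n+k})\to\mathcal{S}(\mathbb{R}^{n+k})$, extend by duality to continuous $\mathcal{S}'(\mathbb{R}^{n+k})\to\mathcal{S}'(\mathbb{R}^{n+k})$, and are unitary on $L^{2}(\mathbb{R}^{n+k})$. Since $T_{\widetilde{S},\chi}=\widetilde{S}^{-1}\circ T_{I,\chi}$, each of the claimed properties for $T_{\widetilde{S},\chi}$ follows at once from the same property for $T_{I,\chi}$.

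So the substantive content is the study of the map $T_{I,\chi}\colon\psi\mapsto\psi\otimes\chi$. First I would check continuity on the Schwartz level: the estimate
\[
|x^{\alpha}y^{\beta}\partial_{x}^{\gamma}\partial_{y}^{\delta}(\psi\otimes\chi)(x,y)|=|x^{\alpha}\partial_{x}^{\gamma}\psi(x)|\,|y^{\beta}\partial_{y}^{\delta}\chi(y)|
\]
shows at once that $\psi\mapsto\psi\otimes\chi$ is continuous $\mathcal{S}(\mathbb{R}^{n})\to\mathcal{S}(\mathbb{R}^{n+k})$. Extension to tempered distributions is done by duality: for $\psi\in\mathcal{S}'(\mathbb{R}^{n})$ and $\Phi\in\mathcal{S}(\mathbb{R}^{n+k})$, the partial integration
\[
\Phi_{\chi}(x):=\int_{\mathbb{R}^{k}}\chi(y)\Phi(x,y)\,dy
\]
lies in $\mathcal{S}(\mathbb{R}^{n})$ and depends continuously on $\Phi$, so we may set $\langle\psi\otimes\chi,\Phi\rangle:=\langle\psi,\Phi_{\chi}\rangle$, which defines the continuous extension $T_{I,\chi}\colon\mathcal{S}'(\mathbb{R}^{n})\to\mathcal{S}'(\mathbb{R}^{n+k})$. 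For the $L^{2}$ partial-isometry claim with $\|\chi\|=1$, one computes
\[
\|\psi\otimes\chi\|_{L^{2}(\mathbb{R}^{n+k})}^{2}=\|\psi\|^{2}\,\|\chi\|^{2}=\|\psi\|^{2},
\]
so $T_{I,\chi}$ is an isometry (hence in particular a partial isometry), and composing with the unitary $\widetilde{S}^{-1}$ preserves this.

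For the adjoint formula, I would start from the identity
\[
(T_{\widetilde{S},\chi}\psi\,|\,\Phi)_{L^{2}(\mathbb{R}^{n+k})}=\bigl(\widetilde{S}^{-1}(\psi\otimes\chi)\,\big|\,\Phi\bigr)=\bigl(\psi\otimes\chi\,\big|\,\widetilde{S}\Phi\bigr),
\]
using unitarity of $\widetilde{S}$. Writing this out as an iterated integral and applying Fubini (valid for $\psi\in L^{2}(\mathbb{R}^{n})$, $\Phi\in L^{2}(\mathbb{R}^{n+k})$, $\chi\in\mathcal{S}(\mathbb{R}^{k})$ with $\|\chi\|=1$) immediately yields (\ref{tadj}).

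Finally, to extend $T_{\widetilde{S},\chi}^{\ast}$ to $\mathcal{S}'(\mathbb{R}^{n+k})\to\mathcal{S}'(\mathbb{R}^{n})$, I would take (\ref{tadj2}) as the definition: for $\Phi\in\mathcal{S}'(\mathbb{R}^{n+k})$ the distribution $\widetilde{S}\Phi\in\mathcal{S}'(\mathbb{R}^{n+k})$ pairs with the Schwartz function $\overline{\chi}\otimes\psi\in\mathcal{S}(\mathbb{R}^{n+k})$, and the map $\psi\mapsto\langle\widetilde{S}\Phi,\overline{\chi}\otimes\psi\rangle$ is continuous on $\mathcal{S}(\mathbb{R}^{n})$ by the same tensor-product continuity used above, hence defines an element of $\mathcal{S}'(\mathbb{R}^{n})$. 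One must then verify that this definition agrees with (\ref{tadj}) when $\Phi\in L^{2}\cap\mathcal{S}'$, which is a routine computation using Fubini. The only mild obstacle is keeping the ordering of the tensor product $\overline{\chi}\otimes\psi$ consistent with the splitting $\mathbb{R}^{n+k}=\mathbb{R}^{n}_{x}\times\mathbb{R}^{k}_{y}$ adopted earlier; this is essentially bookkeeping and does not affect the argument.
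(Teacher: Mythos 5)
Your proposal is correct and follows essentially the same route as the paper's own proof: reduce to the tensor-product map $\psi\mapsto\psi\otimes\chi$, use the unitarity and $\mathcal{S}'$-continuity of the metaplectic operator $\widetilde{S}$, compute the $L^{2}$ norm for the partial-isometry claim, obtain (\ref{tadj}) from $(\Phi|T_{\widetilde{S},\chi}\psi)=(\widetilde{S}\Phi|\psi\otimes\chi)$, and read off (\ref{tadj2}) as the distributional definition of the adjoint. The only difference is that you make explicit some steps the paper leaves implicit (the seminorm estimate for $\psi\mapsto\psi\otimes\chi$, the duality pairing via the partial integral, and the Fubini justification), which is harmless added detail rather than a different argument.
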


\begin{proof}
Let $\psi\in\mathcal{S}^{\prime}(\mathbb{R}^{n})$; then $\psi\otimes\chi
\in\mathcal{S}^{\prime}(\mathbb{R}^{n+k})$ and hence $\widetilde{S}^{-1}%
(\psi\otimes\chi)\in\mathcal{S}^{\prime}(\mathbb{R}^{n+k})$. The formula
$T_{\widetilde{S},\chi}\psi=\widetilde{S}^{-1}(\psi\otimes\chi)$ defines the
desired extension. Assume now $||\chi||=1$ and $\psi\in L^{2}(\mathbb{R}^{n}%
)$; dropping for notational simplicity the references to the spaces in scalar
products and norms we have%
\[
||T_{\widetilde{S},\chi}\psi||^{2}=||\widetilde{S}^{-1}(\psi\otimes\chi
)||^{2}=||\psi\otimes\chi||^{2}%
\]
since $\widetilde{S}$ is unitary. Now $||\psi\otimes\chi||^{2}=||\psi
||^{2}||\chi||^{2}$ hence $T_{\widetilde{S},\chi}$ is a partial isometry. Its
adjoint is defined by $(T_{\widetilde{S},\chi}^{\ast}\Phi|\psi)=(\Phi
|T_{\widetilde{S},\chi}\psi)$. Formula (\ref{tadj}) follows since we have%
\begin{align*}
(\Phi|T_{\widetilde{S},\chi}\psi)  &  =(\Phi|\widetilde{S}^{-1}(\psi
\otimes\chi))\\
&  =(\widetilde{S}\Phi|\psi\otimes\chi)\\
&  =\int_{\mathbb{R}^{n}}\overline{\psi(x)}\left[  \int_{\mathbb{R}^{k}%
}\overline{\chi(y)}{\widetilde{S}\Phi(x,y)}dy\right]  dx.
\end{align*}
Finally, using distributional brackets and the definition of the adjoint, the
previous equation can be re-written as
\[
\langle T_{\widetilde{S},\chi}^{\ast} \Phi, \psi\rangle=\langle\widetilde{S}
\Phi, \overline{\chi} \otimes\psi\rangle\quad, \quad\forall\psi\in
\mathcal{S}(\mathbb{R}^{n})
\]
which defines the distribution $T_{\widetilde{S},\chi}^{\ast} \Phi
\in\mathcal{S}^{\prime}(\mathbb{R}^{n})$ for every $\Phi\in\mathcal{S}%
^{\prime}(\mathbb{R}^{n+k})$.
\end{proof}

The following intertwining relations are essential for studying
the spectral properties of the symplectic dimensional extensions:

\begin{proposition}
\label{proptic1}Let $s\in\operatorname*{Sp}(2(n+k),\mathbb{R})$ and
$\widetilde{S}$ be one of the two metaplectic operators that projects onto
$s$. Then
\begin{equation}
\widetilde{A}_{{s}}T_{\widetilde{S},\chi}=T_{\widetilde{S},\chi}A\text{ \ ,
\ }T_{\widetilde{S},\chi}^{\ast}\widetilde{A}_{ {s}}=AT_{\widetilde{S},\chi
}^{\ast} \label{inter}%
\end{equation}
for every $A=\operatorname*{Op}\nolimits^{w}(a)$, $a\in\mathcal{S}^{\prime
}(\mathbb{R}^{2n})$ and $\widetilde{A}_{s}={\mathbb{E}}_{s}[A]$.
\end{proposition}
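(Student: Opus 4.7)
The plan is to reduce everything to the case $s = I$ via the conjugation formula $\widetilde{A}_s = \widetilde{S}^{-1}\widetilde{A}_I\widetilde{S}$ of Proposition \ref{prop2}, and then to exploit the key tensor-factorization property (\ref{aa4}), namely $\widetilde{A}_I(\psi\otimes\chi) = (A\psi)\otimes\chi$, which is the mechanism that "extracts" $A$ from $\widetilde{A}_I$ when acting on simple tensors.

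For the first intertwiner, I would start with $\psi\in\mathcal{S}(\mathbb{R}^n)$ and compute directly:
\begin{equation*}
\widetilde{A}_{s}T_{\widetilde{S},\chi}\psi = \widetilde{S}^{-1}\widetilde{A}_I\widetilde{S}\,\widetilde{S}^{-1}(\psi\otimes\chi) = \widetilde{S}^{-1}\widetilde{A}_I(\psi\otimes\chi) = \widetilde{S}^{-1}\bigl((A\psi)\otimes\chi\bigr) = T_{\widetilde{S},\chi}(A\psi),
\end{equation*}
where the middle equality uses (\ref{aa4}). This establishes the first identity on $\mathcal{S}(\mathbb{R}^n)$; the case $a\in\mathcal{S}'(\mathbb{R}^{2n})$ (where $A:\mathcal{S}(\mathbb{R}^n)\to\mathcal{S}'(\mathbb{R}^n)$) is handled by the same calculation, but now interpreting each equality distributionally via (\ref{aa3}). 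Extension to $\psi\in\mathcal{S}'(\mathbb{R}^n)$ is obtained by the duality extension described in Remark \ref{remark-duality} and the fact that $T_{\widetilde{S},\chi}$ already extends to $\mathcal{S}'$ by Proposition \ref{proptic2}.

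For the second intertwiner, the cleanest route is to apply the first identity to the adjoint operator $A^*$, which has Weyl symbol $\overline{a}$, and to use Proposition \ref{propSA}: by (\ref{asas}) we have $\mathbb{E}_s[A^*] = \mathbb{E}_s[A]^* = \widetilde{A}_s^*$. The first intertwiner applied to $A^*$ reads $\widetilde{A}_s^* T_{\widetilde{S},\chi} = T_{\widetilde{S},\chi}A^*$; taking Hilbert-space adjoints (legitimate on $L^2$ since $T_{\widetilde{S},\chi}$ is a partial isometry when $\|\chi\|=1$) yields exactly $T_{\widetilde{S},\chi}^* \widetilde{A}_s = A\,T_{\widetilde{S},\chi}^*$. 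Alternatively, one can compute directly using formula (\ref{tadj}): writing $\Psi = \widetilde{S}\Phi$ and using (\ref{aa3}),
\begin{equation*}
T_{\widetilde{S},\chi}^*\widetilde{A}_s\Phi(x) = \int_{\mathbb{R}^k}\overline{\chi(y)}\,\widetilde{A}_I\Psi(x,y)\,dy = \int_{\mathbb{R}^k}\overline{\chi(y)}\,A\psi_y(x)\,dy = A\!\left(T_{\widetilde{S},\chi}^*\Phi\right)(x),
\end{equation*}
the last equality being continuity of $A$ under the $y$-integral.

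The main obstacle is the rigorous handling of the distributional case: for a general symbol $a\in\mathcal{S}'(\mathbb{R}^{2n})$, the operator $A$ maps $\mathcal{S}$ into $\mathcal{S}'$, so the tensor identity (\ref{aa4}) is only a shorthand for the kernel identity (\ref{Sym-ExtKer})--(\ref{aa3}), and the interchange of $A$ with the $y$-integration in the second intertwiner must be justified via the distributional pairing (\ref{tadj2}). I would dispatch this by reformulating both identities in terms of the distributional brackets (\ref{atildater})--(\ref{atilda5}) and Proposition \ref{proptic2}, so that the tensor-factorization argument becomes a statement about the equality of two tempered distributions on $\mathcal{S}(\mathbb{R}^n)$ tested against arbitrary $\psi\in\mathcal{S}(\mathbb{R}^n)$.
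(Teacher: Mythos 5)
Your proposal is correct and follows essentially the same route as the paper's own proof: reduction to $s=I$ via the conjugation formula (\ref{syco1tilde}), extraction of $A$ through the tensor-factorization identity (\ref{aa3})--(\ref{aa4}), and derivation of the second relation by applying the first to $A^{\ast}$ via (\ref{asas}) and taking adjoints. Your additional remarks on the distributional justification and the alternative direct computation with (\ref{tadj}) are consistent with, but not needed beyond, the paper's argument.
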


\begin{proof}
Let us first prove that $\widetilde{A}_{{s}}T_{\widetilde{S},\chi
}=T_{\widetilde{S},\chi}A$ for the case $\widetilde{S}=I$. Let us set
$T_{\chi}=T_{I,\chi}$ and let $\psi\in\mathcal{S}(\mathbb{R}^{n})$. In view of
formula (\ref{aa3}) we have%
\begin{align*}
\widetilde{A}_{I}(T_{\chi}\psi)(x,y)  &  =A(T_{\chi}\psi)_{y}(x)\\
&  =A\psi(x)\chi(y)\\
&  =T_{\chi}(A\psi)(x,y).
\end{align*}
The general case follows using formula (\ref{syco1tilde}) since%
\begin{align*}
\widetilde{A}_{{s}}T_{\widetilde{S},\chi}  &  =\widetilde{S}^{-1}%
\widetilde{A}_{I}\widetilde{S}(\widetilde{S}^{-1}T_{\chi})=\widetilde{S}%
^{-1}\widetilde{A}_{I}T_{\chi}\\
&  =\widetilde{S}^{-1}T_{\chi}A=T_{\widetilde{S},\chi}A.
\end{align*}
The second formula (\ref{inter}) is deduced from the first: since
$\widetilde{A}_{{s}}^{\ast}T_{\widetilde{S},\chi}=T_{\widetilde{S},\chi
}A^{\ast}$ (because of the equality (\ref{asas})), we have $(\widetilde{A}_{
{s}}^{\ast}T_{\widetilde{S},\chi})^{\ast}=(T_{\widetilde{S},\chi}A^{\ast
})^{\ast}$ that is $T_{\widetilde{S},\chi}^{\ast}\widetilde{A}_{ {s}%
}=AT_{\widetilde{S},\chi}^{\ast}$.
\end{proof}

\section{Spectral Results}

\subsection{Orthonormal bases and the partial isometries
$T_{\protect\widetilde{S},\chi}$}

In view of Proposition \ref{proptic2} the intertwining operators
$T_{\widetilde{S},\chi}$ are partial isometries of $L^{2}(\mathbb{R}^{n})$
into $L^{2}(\mathbb{R}^{n+k})$. Hence the range $\mathcal{H}_{\widetilde{S}%
,\chi}$ of $T_{\widetilde{S},\chi}$ is a closed subspace of $L^{2}%
(\mathbb{R}^{n+k})$. The following result shows how the operators
$T_{\widetilde{S},\chi}$ generate orthonormal bases of $L^{2}(\mathbb{R}%
^{n+k})$.

\begin{proposition}
\label{propONB}Let $(\phi_{j})_{j}$ be an orthonormal basis of $L^{2}%
(\mathbb{R}^{n})$ and $(\chi_{l})_{l}$ an orthonormal basis of $L^{2}%
(\mathbb{R}^{k})$. Then $(T_{\widetilde{S},\chi_{l}}\phi_{j})_{j,l}$ is an
orthonormal basis of $L^{2}(\mathbb{R}^{n+k})$ and $L^{2}(\mathbb{R}%
^{n+k})=\oplus_{l} \mathcal{H}_{\widetilde{S},\chi_{l}}$.
\end{proposition}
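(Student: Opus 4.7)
The plan is to reduce everything to the unitarity of $\widetilde{S}$ together with the standard Hilbert-space fact that tensor products of orthonormal bases are again orthonormal bases of the tensor product space.

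First, I would observe that under the canonical isomorphism $L^{2}(\mathbb{R}^{n})\otimes L^{2}(\mathbb{R}^{k})\cong L^{2}(\mathbb{R}^{n+k})$, the family $(\phi_{j}\otimes\chi_{l})_{j,l}$ is an orthonormal basis of $L^{2}(\mathbb{R}^{n+k})$: orthonormality follows from
\[
(\phi_{j}\otimes\chi_{l}\mid\phi_{j'}\otimes\chi_{l'})_{L^{2}(\mathbb{R}^{n+k})}=(\phi_{j}\mid\phi_{j'})_{L^{2}(\mathbb{R}^{n})}(\chi_{l}\mid\chi_{l'})_{L^{2}(\mathbb{R}^{k})}=\delta_{jj'}\delta_{ll'},
\]
and completeness is the classical consequence of Fubini applied to an arbitrary $F\in L^{2}(\mathbb{R}^{n+k})$: expanding $F(\cdot,y)$ in $(\phi_{j})_{j}$ for a.e.\ $y$ and then expanding the coefficient function in $(\chi_{l})_{l}$ yields $F=\sum_{j,l}c_{jl}\,\phi_{j}\otimes\chi_{l}$.

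Next, I would invoke the fact that the metaplectic operator $\widetilde{S}\in\operatorname{Mp}(2(n+k),\mathbb{R})$, and hence $\widetilde{S}^{-1}$, is unitary on $L^{2}(\mathbb{R}^{n+k})$. Since unitary operators map orthonormal bases to orthonormal bases, the family
\[
T_{\widetilde{S},\chi_{l}}\phi_{j}=\widetilde{S}^{-1}(\phi_{j}\otimes\chi_{l}),\qquad (j,l),
\]
is an orthonormal basis of $L^{2}(\mathbb{R}^{n+k})$, which is the first claim.

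For the direct sum decomposition, I would note that $\mathcal{H}_{\widetilde{S},\chi_{l}}=T_{\widetilde{S},\chi_{l}}(L^{2}(\mathbb{R}^{n}))=\widetilde{S}^{-1}\bigl(L^{2}(\mathbb{R}^{n})\otimes\chi_{l}\bigr)$, so $\mathcal{H}_{\widetilde{S},\chi_{l}}$ is the closed linear span of $\{T_{\widetilde{S},\chi_{l}}\phi_{j}\}_{j}$. For $l\neq l'$, any element of $\mathcal{H}_{\widetilde{S},\chi_{l}}$ is orthogonal to any element of $\mathcal{H}_{\widetilde{S},\chi_{l'}}$ because $\widetilde{S}^{-1}$ is unitary and $L^{2}(\mathbb{R}^{n})\otimes\chi_{l}\perp L^{2}(\mathbb{R}^{n})\otimes\chi_{l'}$ in $L^{2}(\mathbb{R}^{n+k})$. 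Finally, the orthonormal basis statement just established implies that the algebraic sum $\bigoplus_{l}\mathcal{H}_{\widetilde{S},\chi_{l}}$ is dense in $L^{2}(\mathbb{R}^{n+k})$, giving the desired Hilbert direct sum.

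The main obstacle, though mild, is the completeness part of step one: one needs the tensor-product ONB result in $L^{2}$, which is standard but must be invoked explicitly. Once that is in hand, the rest is a transparent application of unitarity of $\widetilde{S}$.
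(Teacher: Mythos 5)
Your proof is correct and follows essentially the same route as the paper: both reduce the statement to the fact that $(\phi_j\otimes\chi_l)_{j,l}$ is an orthonormal basis of $L^{2}(\mathbb{R}^{n+k})$ and then apply the unitarity of $\widetilde{S}^{-1}$, the only cosmetic difference being that the paper verifies orthonormality and completeness by direct computation while you invoke ``unitaries map orthonormal bases to orthonormal bases,'' and your treatment of the decomposition $\oplus_{l}\mathcal{H}_{\widetilde{S},\chi_{l}}$ via mutual orthogonality of the ranges is, if anything, slightly more explicit than the paper's.
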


\begin{proof}
Let us set $\Phi_{j,l}=T_{\widetilde{S},\chi_{l}}\phi_{j}$. We have%
\begin{align*}
(\Phi_{j,l}|\Phi_{j^{\prime},l^{\prime}})  &  =(\widetilde{S}^{-1}(\phi
_{j}\otimes\chi_{l})|\widetilde{S}^{-1}(\phi_{j^{\prime}}\otimes
\chi_{l^{\prime}}))\\
&  =(\phi_{j}\otimes\chi_{l}|\phi_{j^{\prime}}\otimes\chi_{l^{\prime}})\\
&  =(\phi_{j}|\phi_{j^{\prime}})(\chi_{l}|\chi_{l^{\prime}})
\end{align*}
hence the vectors $\Phi_{j,l}$ form an orthonormal system in $L^{2}%
(\mathbb{R}^{n+k})$. Let us prove that this system is complete in
$L^{2}(\mathbb{R}^{n+k})$; it suffices for that to show that if $\Psi\in
L^{2}(\mathbb{R}^{n+k})$ is such that $(\Psi|\Phi_{j,l})=0$ for all indices
$j,l$ then $\Psi=0$. Now,%
\[
(\Psi|\Phi_{j,l})=(\Psi|\widetilde{S}^{-1}(\phi_{j}\otimes\chi_{l}%
))=(\widetilde{S}\Psi|\phi_{j}\otimes\chi_{l}).
\]
and $(\Psi|\Phi_{j,l})=0$ for all $j,l$ implies $\widetilde{S}\Psi=0$ because
the tensor product of the two orthonormal bases is an orthonormal basis of
$L^{2}(\mathbb{R}^{n+k})$; it follows that $\Psi=0$ as claimed. Moreover, for
each $l$ the set $(\Phi_{j,l})_{j}$ is an orthonormal basis of $\mathcal{H}%
_{\widetilde{S},\chi_{l}}$. Since for $l \not =l^{\prime}$ we have
$\mathcal{H}_{\widetilde{S},\chi_{l}} \cap\mathcal{H}_{\widetilde{S}%
,\chi_{l^{\prime}}}=\{0\}$ it follows that $L^{2}(\mathbb{R}^{n+k})=\oplus_{l}
\mathcal{H}_{\widetilde{S},\chi_{l}}$.
\end{proof}

The observant Reader will have noticed that in the first part of the proof we
established the equality%
\begin{equation}
(T_{\widetilde{S},\chi}\phi|T_{\widetilde{S},\chi^{\prime}}\phi^{\prime
})_{L^{2}(\mathbb{R}^{n+k})}=(\phi|\phi^{\prime})_{L^{2}(\mathbb{R}^{n})}%
(\chi|\chi^{\prime})_{L^{2}(\mathbb{R}^{k})} \label{moylike}%
\end{equation}
which is valid for all pairs of functions $(\phi,\phi^{\prime})$
in\ $L^{2}(\mathbb{R}^{n})$ and $(\chi,\chi^{\prime})$ in\ $L^{2}%
(\mathbb{R}^{k})$.

\subsection{Spectral result}

Let us prove the main result of this section:

\begin{proposition}
\label{spectraltheor} Let $s\in$ $\operatorname*{Sp}(2(n+k),\mathbb{R})$, let
$A:\mathcal{S}(\mathbb{R}^{n})\mapsto\mathcal{S}^{\prime}(\mathbb{R}^{n})$ be
a linear continuous operator and let $\widetilde{A}_{s}={\mathbb{E}}_{s}[A]$.
Let $\widetilde{S}$ be one of the metaplectic operators associated with $s$. Then:

(i) The eigenvalues of $A$ and $\widetilde{A}_{{s}}$ are the same.

(ii) If $\psi$ is an eigenfunction of $A$ corresponding to the eigenvalue
$\lambda$ then $\Psi=T_{\widetilde{S},\chi}\psi$ is an eigenfunction of
$\widetilde{A}_{s}$ corresponding to $\lambda$, for every $\chi\in
\mathcal{S}(\mathbb{R}^{k}) \backslash\left\{  0 \right\}  $, and we have
$\Psi\in\mathcal{S}(\mathbb{R}^{n+k})$.

(iii) Conversely, if $\Psi$ is an eigenfunction of $\widetilde{A}_{{s}}$ and
$\psi=T_{\widetilde{S},\chi}^{*}\Psi$ is different from zero then $\psi$ is an
eigenfunction of $A$ and corresponds to the same eigenvalue.

(iv) If $(\psi_{j})_{j}$ is an orthonormal basis of eigenfunctions of $A$ and
$(\chi_{l}\in\mathcal{S}(\mathbb{R}^{k}))_{l}$ is an orthonormal basis of
$L^{2}(\mathbb{R}^{k})$ then $(T_{\widetilde{S},\chi_{l}}\psi_{j})_{j,l}$ is a
complete set of eigenfunctions of $\widetilde{A}_{s}$ and forms an orthonormal
basis of $L^{2}(\mathbb{R}^{n+k})$.
\end{proposition}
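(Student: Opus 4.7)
The whole proof rests on the two intertwining relations of Proposition \ref{proptic1},
\[
\widetilde{A}_{s}\, T_{\widetilde{S},\chi} = T_{\widetilde{S},\chi}\, A, \qquad
T_{\widetilde{S},\chi}^{*}\, \widetilde{A}_{s} = A\, T_{\widetilde{S},\chi}^{*},
\]
combined with the orthonormal-basis construction of Proposition \ref{propONB}. Parts (ii) and (iii) follow essentially by applying these two identities to eigenvalue equations; part (i) is extracted from (ii) and (iii) but needs one extra argument for the converse direction; (iv) is then an immediate corollary of (ii) and Proposition \ref{propONB}.

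For (ii), I would start from $A\psi=\lambda\psi$ and apply $T_{\widetilde{S},\chi}$ on both sides to get $\widetilde{A}_{s}(T_{\widetilde{S},\chi}\psi)=T_{\widetilde{S},\chi}(A\psi)=\lambda\, T_{\widetilde{S},\chi}\psi$, which is the eigenvalue equation for $\Psi=T_{\widetilde{S},\chi}\psi$. That $\Psi\in\mathcal{S}(\mathbb{R}^{n+k})$ is immediate from the definition $T_{\widetilde{S},\chi}\psi=\widetilde{S}^{-1}(\psi\otimes\chi)$ since $\psi\otimes\chi\in\mathcal{S}(\mathbb{R}^{n+k})$ and metaplectic operators preserve the Schwartz class; and $\Psi\neq 0$ since $\widetilde{S}$ is unitary and $\psi\otimes\chi\neq 0$ as soon as $\psi,\chi\neq 0$. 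For (iii), I apply the second intertwining relation to $\widetilde{A}_{s}\Psi=\lambda\Psi$ to obtain
\[
A(T_{\widetilde{S},\chi}^{*}\Psi)=T_{\widetilde{S},\chi}^{*}(\widetilde{A}_{s}\Psi)=\lambda\, T_{\widetilde{S},\chi}^{*}\Psi,
\]
so if $\psi=T_{\widetilde{S},\chi}^{*}\Psi\neq 0$ it is an eigenfunction of $A$ with the same eigenvalue $\lambda$.

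For (i), the forward direction is a direct consequence of (ii). The reverse direction is the only mildly delicate point: given an eigenvalue $\lambda$ of $\widetilde{A}_{s}$ with eigenfunction $\Psi\neq 0$, we must produce $\chi\in\mathcal{S}(\mathbb{R}^{k})$ with $T_{\widetilde{S},\chi}^{*}\Psi\neq 0$ so that (iii) can be applied. I would select an orthonormal basis $(\chi_{l})_{l}$ of $L^{2}(\mathbb{R}^{k})$ consisting of Schwartz functions (for instance Hermite functions) and any orthonormal basis $(\phi_{j})_{j}$ of $L^{2}(\mathbb{R}^{n})$. By Proposition \ref{propONB} the family $(T_{\widetilde{S},\chi_{l}}\phi_{j})_{j,l}$ is an orthonormal basis of $L^{2}(\mathbb{R}^{n+k})$, and the coefficients of $\Psi$ in this basis are $(\Psi|T_{\widetilde{S},\chi_{l}}\phi_{j})=(T_{\widetilde{S},\chi_{l}}^{*}\Psi|\phi_{j})$. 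Since $\Psi\neq 0$, at least one such coefficient is nonzero; the corresponding index $l_{0}$ yields $\chi=\chi_{l_{0}}$ with $T_{\widetilde{S},\chi}^{*}\Psi\neq 0$, and (iii) finishes the argument. Finally, part (iv) follows at once: by Proposition \ref{propONB} the family $(T_{\widetilde{S},\chi_{l}}\psi_{j})_{j,l}$ is an orthonormal basis of $L^{2}(\mathbb{R}^{n+k})$, and by (ii) each of its vectors is an eigenfunction of $\widetilde{A}_{s}$ with the same eigenvalue as $\psi_{j}$. The only real obstacle is the existence-of-$\chi$ step in the converse of (i), and this is handled cleanly by the orthonormal basis produced in Proposition \ref{propONB}.
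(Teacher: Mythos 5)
Your proposal is correct and follows essentially the same route as the paper: both rest on the intertwining relations of Proposition \ref{proptic1} for (ii) and (iii), and on Proposition \ref{propONB} for (iv) and for producing a $\chi$ with $T_{\widetilde{S},\chi}^{*}\Psi\neq 0$ in the converse of (i). The only cosmetic difference is that you exhibit a nonzero basis coefficient $(\Psi|T_{\widetilde{S},\chi_{l}}\phi_{j})=(T_{\widetilde{S},\chi_{l}}^{*}\Psi|\phi_{j})$, whereas the paper phrases the same completeness argument through the projections $P_{\widetilde{S},\chi}=T_{\widetilde{S},\chi}T_{\widetilde{S},\chi}^{*}$.
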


\begin{proof}
That every eigenvalue of $A$ also is an eigenvalue of $\widetilde{A}_{ {s}}$
is clear: if $A\psi=\lambda\psi$ for some $\psi\neq0$ then
\[
\widetilde{A}_{{s}}(T_{\widetilde{S},\chi}\psi)=T_{\widetilde{S},\chi}%
A\psi=\lambda(T_{\widetilde{S},\chi}\psi)
\]
and $T_{\widetilde{S},\chi}\psi\neq0$ because $T_{\widetilde{S},\chi}$ is
injective; this shows at the same time that $\Psi=T_{\widetilde{S},\chi}\psi$
is an eigenfunction of $\widetilde{A}_{{s}}$. Since $T_{\widetilde{S},\chi
}:\mathcal{S}(\mathbb{R}^{n})\longrightarrow\mathcal{S}(\mathbb{R}^{n+k})$ it
is clear that $\Psi\in\mathcal{S}(\mathbb{R}^{n+k})$ which concludes the proof
of (ii).

Assume conversely that $\widetilde{A}_{{s}}\Psi=\lambda\Psi$ and $\Psi\neq0$ .
For every $\chi$ we have, using the second intertwining relation
(\ref{inter}),
\[
AT_{\widetilde{S},\chi}^{\ast}\Psi=T_{\widetilde{S},\chi}^{\ast}%
\widetilde{A}_{{s}}\Psi=\lambda T_{\widetilde{S},\chi}^{\ast}\Psi
\]
hence $\lambda$ is an eigenvalue of $A$ and $T_{\widetilde{S},\chi}^{\ast}%
\Psi$ will be an eigenfunction of $A$ if it is different from zero. This
proves (iii).

To conclude the proof of (i) we still have to show that if $\Psi$ is a
eigenfunction of $\widetilde{A}_{s}$ then $T_{\widetilde{S},\chi}^{\ast}
\Psi\not = 0$ for some $\chi\in\mathcal{S}(\mathbb{R}^{k})$. We note that
$T_{\widetilde{S},\chi}T_{\widetilde{S},\chi}^{\ast}=P_{\widetilde{S},\chi}$
is the orthogonal projection onto the range $\mathcal{H}_{\widetilde{S},\chi}$
of $T_{\widetilde{S},\chi}$. Assume that $T_{\widetilde{S},\chi}^{\ast}\Psi=0$
for all $\chi\in\mathcal{S}(\mathbb{R}^{k})$; then $P_{\widetilde{S},\chi}%
\Psi=0$ for every $\chi\in\mathcal{S}(\mathbb{R}^{k})$, and hence $\Psi=0$ in
view of Proposition \ref{propONB}; but this is not possible since $\Psi$ is an eigenfunction.

Finally, the statement (iv) follows from (ii) and Proposition \ref{propONB}.
\end{proof}

\subsection{Generalized spectral theorem\label{subspec}}

If $A$ is a continuous linear operator $\mathcal{S}(\mathbb{R}^{n}%
)\longrightarrow\mathcal{S}(\mathbb{R}^{n})$ then it can be extended to a
continuous operator $\mathcal{S}^{\prime}(\mathbb{R}^{n})\longrightarrow
\mathcal{S}^{\prime}(\mathbb{R}^{n})$. In view of Lemma \ref{lem},
$\widetilde{A}_{s}={\mathbb{E}}_{s}[A]$ is also a continuous operator
$\mathcal{S}(\mathbb{R}^{n+k})\longrightarrow\mathcal{S}(\mathbb{R}^{n+k})$
and thus can also be extended to a continuous operator $\mathcal{S}^{\prime
}(\mathbb{R}^{n+k})\longrightarrow\mathcal{S}^{\prime}(\mathbb{R}^{n+k})$. In
this case Proposition \ref{spectraltheor} can be extended to a generalized
spectral result. Let $(\psi,\lambda)\in\mathcal{S}^{\prime}(\mathbb{R}%
^{n})\times\mathbb{C}$. We will say that $\psi$ is a generalized eigenvector
of $A$, corresponding to the generalized eigenvalue $\lambda$ if $\psi\not =0$
and
\[
\langle\psi,\overline{A^{\ast}\phi}\rangle=\lambda\langle\psi,\overline{\phi
}\rangle\text{ \ \textit{for all} }\phi\in\mathcal{S}(\mathbb{R}^{n}).
\]
We will write this equality formally as $(\psi|A^{\ast}\phi)=\lambda(\psi
|\phi) $ since both sides coincide with the usual scalar products when $\psi$
is a square integrable function.

\begin{proposition}
\label{GST} Assume that $A$ is a continuous linear operator of the form
$A:\mathcal{S}(\mathbb{R}^{n})\longrightarrow\mathcal{S}(\mathbb{R}^{n})$ and
let $\widetilde{A}_{s}={\mathbb{E}}_{s}[A]$. Then:

(i) The generalized eigenvalues of the operators $A$ and $\widetilde{A}_{{s}}$
are the same.

(ii) Let $\psi$ be a generalized eigenvector of $A$. Then, for every $\chi
\in\mathcal{S}(\mathbb{R}^{k}) \backslash\left\{  0 \right\}  $ the vector
$\Psi=T_{\widetilde{S},\chi} \psi$ is a generalized eigenvector of
$\widetilde{A}_{s}$ and corresponds to the same generalized eigenvalue.

(iii) Conversely, if $\Psi$ is a generalized eigenvector of $\widetilde{A}%
_{s}$ and $\psi=T_{\widetilde{S},\chi}^{\ast} \Psi\not =0$ then $\psi$ is a
generalized eigenvector of $A$ corresponding to the same generalized eigenvalue.
\end{proposition}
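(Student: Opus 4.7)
The plan is to mirror the proof of Proposition \ref{spectraltheor}, replacing ordinary scalar products by distributional pairings throughout, and relying on three ingredients already at hand: the extensions of $T_{\widetilde{S},\chi}$ and $T_{\widetilde{S},\chi}^{\ast}$ to the distributional level given by Proposition \ref{proptic2}, the adjoint identity $\widetilde{A}_{s}^{\ast}=\mathbb{E}_{s}[A^{\ast}]$ from Proposition \ref{propSA}, and the intertwining relations \eqref{inter} from Proposition \ref{proptic1} (applied both to $A$ and to $A^{\ast}$). Since $A$ maps $\mathcal{S}(\mathbb{R}^{n})$ into $\mathcal{S}(\mathbb{R}^{n})$, the adjoint $A^{\ast}$ is a continuous operator $\mathcal{S}(\mathbb{R}^{n})\to\mathcal{S}(\mathbb{R}^{n})$ as well, and likewise $\widetilde{A}_{s}^{\ast}$ preserves $\mathcal{S}(\mathbb{R}^{n+k})$, so all objects that appear below are well defined.

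For (ii), given a generalized eigenvector $\psi\in\mathcal{S}'(\mathbb{R}^{n})$ with eigenvalue $\lambda$, I set $\Psi=T_{\widetilde{S},\chi}\psi=\widetilde{S}^{-1}(\psi\otimes\chi)\in\mathcal{S}'(\mathbb{R}^{n+k})$ (nonzero because $\widetilde{S}^{-1}$ is a bijection and $\chi\neq0$ forces $\psi\otimes\chi\neq0$ as a distribution). Taking any $\Phi\in\mathcal{S}(\mathbb{R}^{n+k})$, I would compute, using the adjoint pairing from Proposition \ref{proptic2} and the second intertwining relation in \eqref{inter} applied to $A^{\ast}$,
\[
(\Psi\,|\,\widetilde{A}_{s}^{\ast}\Phi)=(\psi\,|\,T_{\widetilde{S},\chi}^{\ast}\widetilde{A}_{s}^{\ast}\Phi)=(\psi\,|\,A^{\ast}T_{\widetilde{S},\chi}^{\ast}\Phi).
\]
Since $T_{\widetilde{S},\chi}^{\ast}\Phi\in\mathcal{S}(\mathbb{R}^{n})$, the generalized-eigenvector hypothesis on $\psi$ applies and gives $\lambda(\psi\,|\,T_{\widetilde{S},\chi}^{\ast}\Phi)=\lambda(\Psi\,|\,\Phi)$, proving (ii). For (iii), starting from $\widetilde{A}_{s}\Psi=\lambda\Psi$ in the generalized sense and setting $\psi=T_{\widetilde{S},\chi}^{\ast}\Psi\neq0$, the dual computation uses the first intertwining relation applied to $A^{\ast}$: for any $\phi\in\mathcal{S}(\mathbb{R}^{n})$,
\[
(\psi\,|\,A^{\ast}\phi)=(\Psi\,|\,T_{\widetilde{S},\chi}A^{\ast}\phi)=(\Psi\,|\,\widetilde{A}_{s}^{\ast}T_{\widetilde{S},\chi}\phi)=\lambda(\Psi\,|\,T_{\widetilde{S},\chi}\phi)=\lambda(\psi\,|\,\phi),
\]
where the third equality uses that $T_{\widetilde{S},\chi}\phi\in\mathcal{S}(\mathbb{R}^{n+k})$ so the generalized-eigenvector relation for $\Psi$ applies.

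Part (i) follows from (ii) in one direction and from (iii) in the other, provided I can show that for every nonzero $\Psi\in\mathcal{S}'(\mathbb{R}^{n+k})$ there exists $\chi\in\mathcal{S}(\mathbb{R}^{k})$ with $T_{\widetilde{S},\chi}^{\ast}\Psi\neq0$. This is the main obstacle and is where the $L^{2}$ argument in Proposition \ref{spectraltheor} (which uses an orthogonal decomposition) cannot be reused directly. I would instead argue via the distributional formula \eqref{tadj2}: if $T_{\widetilde{S},\chi}^{\ast}\Psi=0$ for every $\chi\in\mathcal{S}(\mathbb{R}^{k})$, then $\langle\widetilde{S}\Psi,\overline{\chi}\otimes\psi\rangle=0$ for every $\chi\in\mathcal{S}(\mathbb{R}^{k})$ and every $\psi\in\mathcal{S}(\mathbb{R}^{n})$; since finite linear combinations of tensor products $\overline{\chi}\otimes\psi$ are dense in $\mathcal{S}(\mathbb{R}^{n+k})$ (a standard consequence of the Schwartz kernel theorem), this forces $\widetilde{S}\Psi=0$ and hence $\Psi=0$, a contradiction. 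Combining this existence statement with (iii) completes (i).
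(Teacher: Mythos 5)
Your proof is correct and follows essentially the same route as the paper's: for (ii) the paper gives exactly your chain of equalities, pushing $T_{\widetilde{S},\chi}$ across the pairing, invoking the intertwining relations (\ref{inter}) together with $\widetilde{A}_{s}^{\ast}=\mathbb{E}_{s}[A^{\ast}]$ (Proposition \ref{propSA}), and then applying the generalized-eigenvector hypothesis to the test function $T_{\widetilde{S},\chi}^{\ast}\Phi$; for everything else it simply defers, \emph{mutatis mutandis}, to Theorem 445 of \cite{Birkbis}. Your part (iii) is the evident dual computation and matches that reference. Where you add genuine content is part (i): you correctly note that the $L^{2}$ orthonormal-basis argument used in Proposition \ref{spectraltheor} (via Proposition \ref{propONB}) does not apply to a distributional eigenvector $\Psi$, and you replace it by the density of finite linear combinations of tensor products $\overline{\chi}\otimes\psi$ in $\mathcal{S}(\mathbb{R}^{n+k})$, so that $T_{\widetilde{S},\chi}^{\ast}\Psi=0$ for all $\chi$ forces $\widetilde{S}\Psi=0$ and hence $\Psi=0$ via formula (\ref{tadj2}); that density fact is standard and the argument is sound, and it is precisely the step the paper leaves to the cited reference. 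One small caveat, which you share with the paper rather than introduce: your claim that $A^{\ast}$ is automatically a continuous map $\mathcal{S}(\mathbb{R}^{n})\longrightarrow\mathcal{S}(\mathbb{R}^{n})$ does not follow merely from $A$ being so; it is an implicit standing assumption already needed for the paper's definition of a generalized eigenvector (which pairs $\psi$ with $\overline{A^{\ast}\phi}$), so it does not count against your argument here.
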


\begin{proof}
It is similar, \textit{mutatis mutandis}, to the proof of Theorem 445, Ch.19,
in \cite{Birkbis} (also see \cite{CPDE}). For instance, to prove (ii) one
notes that if $(\psi|A^{\ast}\phi)=\lambda(\psi|\phi)$ for every $\phi
\in\mathcal{S}(\mathbb{R}^{n})$ then also $(T_{\widetilde{S},\chi}%
\psi|\widetilde{A}^{\ast}_{s}\Phi)=\lambda(T_{\widetilde{S},\chi}\psi|\Phi)$
for every $\Phi\in\mathcal{S}(\mathbb{R}^{n+k})$. In fact, using the
intertwining property (\ref{inter}),%
\begin{align*}
(T_{\widetilde{S},\chi}\psi|\widetilde{A}^{\ast}_{s}\Phi)  &  =(\psi
|T_{\widetilde{S},\chi}^{\ast}\widetilde{A}^{\ast}_{s}\Phi)\\
&  =(\psi|(\widetilde{A}_{s}T_{\widetilde{S},\chi})^{\ast}\Phi)\\
&  =(\psi|(T_{\widetilde{S},\chi}A)^{\ast}\Phi)\\
&  =(\psi|A^{\ast}T_{\widetilde{S},\chi}^{\ast}\Phi)\\
&  =\lambda(\psi|T_{\widetilde{S},\chi}^{\ast}\Phi)
\end{align*}
hence the claim.
\end{proof}

\section{A new class of pseudo-differential operators\label{sectionSEC}}

In this section we define a new class of pseudo-differential operators which
is an extension of the well-known Shubin class $HG_{\rho}^{m_{1},m_{0}%
}(\mathbb{R}^{2n})$ of globally hypoelliptic operators. Moreover, we prove
several general results about the spectral, invertibility and hypoellipticity
properties of the operators in the new class.

\subsection{The Shubin classes of global hypoelliptic
operators\label{subsectionSC}}

A linear operator $A:\mathcal{S}^{\prime}(\mathbb{R}^{n})\longrightarrow
\mathcal{S}^{\prime}(\mathbb{R}^{n})$ is \textquotedblleft globally
hypoelliptic\textquotedblright\ if it satisfies%
\begin{equation}
\psi\in\mathcal{S}^{\prime}(\mathbb{R}^{n})\text{ and }A\psi\in\mathcal{S}%
(\mathbb{R}^{n})\Longrightarrow\psi\in\mathcal{S}(\mathbb{R}^{n}).
\label{glob1}%
\end{equation}
This notion is different with respect to the ordinary $C^{\infty}$
hypoellipticity (which is a \textit{local} property) because it
incorporates the decay at infinity of the involved functions or
distributions.

The Shubin class of globally hypoelliptic operators $HG_{\rho}^{m_{1},m_{0}%
}(\mathbb{R}^{2n})$ is a subclass of the Shubin class $G_{\rho}^{m_{1}%
}(\mathbb{R}^{2n})$. It is defined (Shubin \cite{sh87}, de Gosson
\cite{Birkbis}) as follows:

\begin{definition}
\label{defintionSC} Let $m_{0}$, $m_{1}$, and $\rho$ be real numbers such that
$m_{0}\leq m_{1}$ and $0<\rho\leq1$. The symbol class $H\Gamma_{\rho}%
^{m_{1},m_{0}}(\mathbb{R}^{2n})$ consists of all functions $a\in C^{\infty
}(\mathbb{R}^{2n})$ such that for $|z|$ sufficiently large the following
estimates hold:
\begin{equation}
C_{0}|z|^{m_{0}}\leq|a(z)|\leq C_{1}|z|^{m_{1}}%
\end{equation}
for some $C_{0},C_{1}>0$ and, for every $\alpha\in\mathbb{N}^{2n}$ there
exists $C_{\alpha}\geq0$ such that
\begin{equation}
|\partial_{z}^{\alpha}a(z)|\leq C_{\alpha}|a(z)||z|^{-\rho|\alpha|}.
\end{equation}
The Shubin class $HG_{\rho}^{m_{1},m_{0}}(\mathbb{R}^{2n})$ consists of all
operators $A:\mathcal{S}(\mathbb{R}^{n})\longrightarrow\mathcal{S}^{\prime
}(\mathbb{R}^{n})$ with Weyl symbols $a\in H\Gamma_{\rho}^{m_{1},m_{0}%
}(\mathbb{R}^{2n})$.
\end{definition}

The main appeal of the classes $HG_{\rho}^{m_{1},m_{0}}(\mathbb{R}^{2n})$ is
the set of general properties which is satisfied by their operators. Moreover,
they contain many operators which are important in mathematical physics (a
trivial example is the harmonic oscillator Hamiltonian). Let us then state the
main properties of the operators in $HG_{\rho}^{m_{1},m_{0}}(\mathbb{R}
^{2n})$ (for proofs see [\cite{sh87}, Chapter IV]).

Since $H\Gamma_{\rho}^{m_{1},m_{0}}(\mathbb{R}^{2n})\subset\Gamma_{\rho
}^{m_{1}}(\mathbb{R}^{2n})$, every operator $A\in HG_{\rho}^{m_{1},m_{0}%
}(\mathbb{R}^{2n})$ is a continuous operator $\mathcal{S}(\mathbb{R}%
^{n})\longrightarrow\mathcal{S}(\mathbb{R}^{n})$ which extends by duality to a
continuous operator $\mathcal{S}^{\prime}(\mathbb{R}^{n})\longrightarrow
\mathcal{S}^{\prime}(\mathbb{R}^{n})$. Moreover,

\begin{proposition}
[Shubin]\label{propshubin}Let $A\in HG_{\rho}^{m_{1},m_{0}}(\mathbb{R}^{2n})$
with $m_{0}>0$. If $A$ is formally self-adjoint, that is if $(\widehat{A}%
\psi|\phi)=(\psi|\widehat{A}\phi)$ for all test functions $\psi,\phi
\in\mathcal{S}(\mathbb{R}^{n})$, then $A$ is essentially self-adjoint and has
discrete spectrum in $L^{2}(\mathbb{R}^{n})$. Moreover there exists an
orthonormal basis of eigenfunctions $\phi_{j}\in\mathcal{S}(\mathbb{R}^{n})$
($j=1,2,...$) with eigenvalues $\lambda_{j}\in\mathbb{R}$ such that
$\lim_{j\rightarrow\infty}|\lambda_{j}|=\infty$.
\end{proposition}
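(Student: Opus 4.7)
The plan is to use the standard parametrix method for globally hypoelliptic pseudo-differential operators. The core idea is that the lower bound $|a(z)|\geq C_0|z|^{m_0}$ for large $|z|$, combined with the derivative estimates $|\partial_z^\alpha a(z)|\leq C_\alpha|a(z)||z|^{-\rho|\alpha|}$, furnishes a symbol $b$ whose associated Weyl operator $B$ inverts $A$ modulo a smoothing (hence compact) remainder. Since $m_0>0$, the parametrix will itself be compact on $L^2(\mathbb{R}^n)$, from which the discreteness of the spectrum follows.

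First I would construct a two-sided parametrix $B$ of $A$. Starting from $b_0(z)=1/a(z)$, which is well-defined off a large ball, and using the Moyal star-product expansion of the Weyl symbol of $BA$, one inductively builds $b\sim\sum_{j\geq 0}b_j$ with $b_j$ of decreasing order, such that $BA=I+R_1$ and $AB=I+R_2$ with $R_1,R_2$ having Weyl symbols in $\mathcal{S}(\mathbb{R}^{2n})$. The derivative estimate is precisely what guarantees that each correction term lies in a class of lower order, while the bound $0<\rho\leq1$ controls the closure of the asymptotic expansion modulo $\mathcal{S}$. This step is the principal technical obstacle: one must verify at each stage that the constructed symbol remains in the appropriate Shubin class $H\Gamma_{\rho}^{\cdot,\cdot}$, and then assemble the partial sums via a Borel-type summation to obtain a genuine symbol.

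Next, since $m_0>0$, the symbol $b$ is of strictly negative order, and standard Hilbert--Schmidt estimates (or a Calder\'on--Vaillancourt argument applied to suitable truncations) show that $B$, along with the smoothing remainders $R_i$, is compact on $L^2(\mathbb{R}^n)$. The global hypoellipticity property (\ref{glob1}) itself follows from the parametrix: if $A\psi\in\mathcal{S}$ for some $\psi\in\mathcal{S}'$, then $\psi=B(A\psi)-R_1\psi\in\mathcal{S}$ since $B$ maps $\mathcal{S}$ to $\mathcal{S}$ and $R_1$ sends $\mathcal{S}'$ into $\mathcal{S}$. With the parametrix in hand, essential self-adjointness of $A$ on $\mathcal{S}(\mathbb{R}^n)$ is established by showing that the ranges of $A\pm i$ are dense in $L^2$: any $\phi\in L^2$ orthogonal to $\operatorname{Range}(A\mp i)$ satisfies $(A\pm i)\phi=0$ in $\mathcal{S}'$, hence $\phi\in\mathcal{S}$ by global hypoellipticity, and then the identity $\|(A\pm i)\phi\|^2=\|A\phi\|^2+\|\phi\|^2$ forces $\phi=0$.

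Finally, the unique self-adjoint extension $\overline{A}$ has compact resolvent: for $\lambda$ in the resolvent set, $A-\lambda$ still lies in $HG_\rho^{m_1,m_0}$ and admits a parametrix $B_\lambda$ of negative order, and the identity $(\overline{A}-\lambda)^{-1}=B_\lambda-B_\lambda R_{1,\lambda}(\overline{A}-\lambda)^{-1}$ exhibits $(\overline{A}-\lambda)^{-1}$ as a sum of compact operators. Applying the spectral theorem for self-adjoint compact operators to $(\overline{A}-\lambda_0)^{-1}$ for any fixed $\lambda_0$ in the resolvent set produces an orthonormal basis $(\phi_j)$ of $L^2(\mathbb{R}^n)$ of eigenfunctions of $\overline{A}$, with eigenvalues $\mu_j\to 0$ for the resolvent; correspondingly, the eigenvalues $\lambda_j=\lambda_0+1/\mu_j$ of $\overline{A}$ satisfy $|\lambda_j|\to\infty$. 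That each $\phi_j\in\mathcal{S}(\mathbb{R}^n)$ is an immediate application of global hypoellipticity to $(A-\lambda_j)\phi_j=0\in\mathcal{S}$, completing the proof.
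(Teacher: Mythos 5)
The paper does not actually prove this proposition: it is quoted as Shubin's theorem with the proof deferred to Chapter IV of \cite{sh87}, and your parametrix argument is precisely that standard proof, correct in outline (parametrix of negative order $-m_{0}$, compactness of negative-order operators and of the smoothing remainders, essential self-adjointness via the range criterion combined with global hypoellipticity of $A\pm i$, compact resolvent, and Schwartz regularity of the eigenfunctions from hypoellipticity of $A-\lambda_{j}$, all of which use $m_{0}>0$ to keep the shifted symbols in the class). The only slip is the resolvent identity, which should read $(\overline{A}-\lambda)^{-1}=B_{\lambda}-R_{1,\lambda}(\overline{A}-\lambda)^{-1}$ rather than $B_{\lambda}-B_{\lambda}R_{1,\lambda}(\overline{A}-\lambda)^{-1}$; this does not affect the compactness conclusion.
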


In addition, every $A\in HG_{\rho}^{m_{1},m_{0}}(\mathbb{R}^{2n})$ is globally
hypoelliptic and, if $\operatorname*{Ker}A=\operatorname*{Ker}$ $A^{\ast
}=\{0\}$, it is also invertible with inverse $A^{-1} \in HG_{\rho}%
^{-m_{0},-m_{1}}(\mathbb{R}^{2n})$.

\subsection{The extended Shubin classes $\protect\widetilde{HG}_{\rho}%
^{m_{1},m_{0}}(\mathbb{R}^{2n})$\label{subsectionESC}}

We now define a new class of pseudo-differential operators:

\begin{definition}
\label{definitionESC}

The symbol class
$\widetilde{H\Gamma}_{\rho}^{m_{1},m_{0}}(\mathbb{R}^{2n})$
consists of all functions $\widetilde{a} \in
C^{\infty}(\mathbb{R}^{2n})$ for which there is $a
\in{H\Gamma}_{\rho}^{m_{1},m_{0}}(\mathbb{R}^{2(n-k)})$ (for some
$k \in\{0,1,..,n-1\}$) and a symplectic extension map
$E_{s}:\mathcal{S}^{\prime }(\mathbb{R}^{2(n-k)})
\longrightarrow\mathcal{S}^{\prime}(\mathbb{R}^{2n})$ such that
$\widetilde{a}=E_{s}[a]$. The set of pseudo-differential operators
with Weyl symbols $\widetilde{a} \in\widetilde{H\Gamma}_{\rho}^{m_{1},m_{0}%
}(\mathbb{R}^{2n})$ is called an "extended Shubin class" and denoted by
$\widetilde{HG}_{\rho}^{m_{1},m_{0}}(\mathbb{R}^{2n})$.
\end{definition}

We notice that, by construction, the symbols in the classes
$\widetilde{H\Gamma}_{\rho}^{m_{1},m_{0}}(\mathbb{R}^{2n})$ satisfy the
property
\[
\widetilde{a}\in\widetilde{H\Gamma}_{\rho}^{m_{1},m_{0}}(\mathbb{R}%
^{2n})\Longleftrightarrow\widetilde{a}\circ s\in\widetilde{H\Gamma}_{\rho
}^{m_{1},m_{0}}(\mathbb{R}^{2n}),\quad\forall s\in\operatorname*{Sp}%
(2n,\mathbb{R}).
\]
Apart from the trivial case $k=0$ or the case where $a(z)$ is a
polynomial, the symbol $E_s[a]$ does not belong any more to the
Shubin classes. Hence the Shubin results concerning
hypoellipticity and spectral theory do not apply to the operators
in the extended Shubin classes of Definition \ref{definitionESC}.

In the next section we will present several examples of operators in the
classes $\widetilde{HG}^{m_{1},m_{0}}_{\rho}(\mathbb{R}^{2n})$, which are
important for applications in quantum mechanics and deformation quantization.
First, let us study the general properties of the operators in $\widetilde{HG}%
^{m_{1},m_{0}}_{\rho}(\mathbb{R}^{2n})$.

\begin{lemma}
\label{Kernelrelation} Let $\widetilde{A} \in\widetilde{HG}_{\rho}%
^{m_{1},m_{0}}(\mathbb{R}^{2n})$ and let $A \in{HG}_{\rho}^{m_{1},m_{0}%
}(\mathbb{R}^{2(n-k)})$ be the associated operator such that $\widetilde{A}
=\mathbb{E}_{s}[A]$. Then Ker $\widetilde{A}=\{0\}$ iff Ker $A=\{0\}$. (To be
clear, both operators are considered as continuous maps of the form
$\mathcal{S}(\mathbb{R}^{m}) \longrightarrow\mathcal{S}(\mathbb{R}^{m})$,
where $m=n$ and $m=n-k$ for $\widetilde{A}$ and $A$, respectively).
\end{lemma}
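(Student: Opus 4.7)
The plan is to recognize that $\mathrm{Ker}\,A$ and $\mathrm{Ker}\,\widetilde{A}$ are precisely the eigenspaces for the eigenvalue $\lambda=0$, so the result should follow from the eigenvalue correspondence in Proposition \ref{spectraltheor}, with the extra task of checking that the constructed eigenvectors lie in the appropriate Schwartz space rather than merely in $L^{2}$.

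For the direction $\mathrm{Ker}\,A\ne\{0\}\Rightarrow\mathrm{Ker}\,\widetilde{A}\ne\{0\}$, I would pick a nonzero $\psi\in\mathcal{S}(\mathbb{R}^{n-k})$ with $A\psi=0$, fix any nonzero $\chi\in\mathcal{S}(\mathbb{R}^{k})$, and set $\Psi:=T_{\widetilde{S},\chi}\psi=\widetilde{S}^{-1}(\psi\otimes\chi)$. Since $\psi\otimes\chi\in\mathcal{S}(\mathbb{R}^{n})\setminus\{0\}$ and metaplectic operators preserve the Schwartz space bijectively, $\Psi\in\mathcal{S}(\mathbb{R}^{n})\setminus\{0\}$. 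The first intertwining relation in (\ref{inter}) then gives $\widetilde{A}\Psi=T_{\widetilde{S},\chi}A\psi=0$, so $\Psi$ is a nonzero Schwartz element of $\mathrm{Ker}\,\widetilde{A}$.

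For the converse, take $\Psi\in\mathcal{S}(\mathbb{R}^{n})\setminus\{0\}$ with $\widetilde{A}\Psi=0$. The second intertwining relation in (\ref{inter}) shows that for every $\chi\in\mathcal{S}(\mathbb{R}^{k})$, $\psi_{\chi}:=T_{\widetilde{S},\chi}^{\ast}\Psi$ satisfies $A\psi_{\chi}=T_{\widetilde{S},\chi}^{\ast}\widetilde{A}\Psi=0$; moreover, formula (\ref{tadj}) combined with $\widetilde{S}\Psi\in\mathcal{S}(\mathbb{R}^{n})$ and $\overline{\chi}\in\mathcal{S}(\mathbb{R}^{k})$ shows that $\psi_{\chi}\in\mathcal{S}(\mathbb{R}^{n-k})$. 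The main obstacle is producing some $\chi$ for which $\psi_{\chi}\ne 0$. This is resolved exactly as in the proof of Proposition \ref{spectraltheor}(i): if $T_{\widetilde{S},\chi}^{\ast}\Psi=0$ for every $\chi\in\mathcal{S}(\mathbb{R}^{k})$, then the orthogonal projections $P_{\widetilde{S},\chi}=T_{\widetilde{S},\chi}T_{\widetilde{S},\chi}^{\ast}$ all annihilate $\Psi$, and applying this to any orthonormal Schwartz basis $(\chi_{l})$ of $L^{2}(\mathbb{R}^{k})$ (for instance the Hermite functions) together with the decomposition $L^{2}(\mathbb{R}^{n})=\oplus_{l}\mathcal{H}_{\widetilde{S},\chi_{l}}$ from Proposition \ref{propONB} forces $\Psi=0$, contradicting the choice of $\Psi$. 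Hence some $\chi$ yields $\psi_{\chi}\in\mathrm{Ker}\,A\cap\mathcal{S}(\mathbb{R}^{n-k})\setminus\{0\}$, which completes the proof.
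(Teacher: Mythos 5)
Your proof is correct and follows essentially the same route as the paper: the forward direction via the first intertwining relation applied to $T_{\widetilde{S},\chi}\psi$, and the converse via the second intertwining relation plus Proposition \ref{propONB} to produce a $\chi$ with $T_{\widetilde{S},\chi}^{\ast}\Psi\neq 0$ (the paper phrases this last step directly through the orthonormal basis $(T_{\widetilde{S},\chi_{l}}\phi_{j})_{j,l}$ rather than through the projections $P_{\widetilde{S},\chi_{l}}$, but that is the same argument, borrowed from Proposition \ref{spectraltheor}(i)). Your explicit check that $T_{\widetilde{S},\chi}^{\ast}\Psi$ is Schwartz is a small detail the paper leaves implicit, and is a welcome addition.
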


\begin{proof}
Assume that Ker $A\not =\{0\}$ and let $\psi\in\mathcal{S}(\mathbb{R}%
^{n-k})\backslash\{0\}$ be such that $A \psi=0$. Let $\Psi=T_{\widetilde{S}%
,\chi}\psi=\widetilde{S}^{-1} \psi\otimes\chi$, where $\widetilde{S}$ is (one
of the two) metaplectic operators associated with $s$, and $\chi\in
\mathcal{S}(\mathbb{R}^{k})\backslash\{0\}$. Then $\Psi\not =0$ and in view of
proposition \ref{proptic1}
\[
\widetilde{A} \Psi=\widetilde{A} T_{\widetilde{S},\chi}\psi= T_{\widetilde{S}%
,\chi} A\psi=0\, .
\]
We conclude that Ker $\widetilde{A}\not =\{0\}$.

Conversely, assume that Ker $\widetilde{A}\not =\{0\}$ and let $\Psi
\in\mathcal{S}(\mathbb{R}^{n})\backslash\{0\}$ be such that $\widetilde{A}%
\Psi=0$. Once again, in view of Proposition \ref{proptic1} we have for
$\psi=T^{*}_{\widetilde{S},\chi} \Psi$
\[
A \psi= A T^{*}_{\widetilde{S},\chi} \Psi=T^{*}_{\widetilde{S},\chi}
\widetilde{A}\Psi=0.
\]
It remains to prove that there is always some $\chi\in\mathcal{S}%
(\mathbb{R}^{k})$ such that $\psi=T^{*}_{\widetilde{S},\chi} \Psi\not =0$.
From proposition \ref{propONB} we know that $(T_{\widetilde{S},\chi_{l}}
\phi_{j})_{jl}$ is an orthogonal basis of $L^{2}(\mathbb{R}^{n})$ provided
$(\phi_{j})_{j}$ and $(\chi_{l})_{l}$ are orthogonal basis of $L^{2}%
(\mathbb{R}^{n-k})$ and $L^{2}(\mathbb{R}^{k})$, respectively. In addition, we
may choose $\phi_{j} \in\mathcal{S}(\mathbb{R}^{n-k})$ and $\chi_{l}%
\in\mathcal{S}(\mathbb{R}^{k})$. It follows that there is always some $l,j$
such that
\[
(\Psi|T_{\widetilde{S},\chi_{l}} \phi_{j})=(T^{*}_{\widetilde{S},\chi_{l}}
\Psi|\phi_{j}) \not =0
\]
and so $\psi=T^{*}_{\widetilde{S},\chi_{l}} \Psi\not =0$, which concludes the proof.
\end{proof}

\begin{remark}
Every operator $A\in{HG}_{\rho}^{m_{1},m_{0}}(\mathbb{R}^{2(n-k)})$ extends to
a continuous operator of the form $A^{\prime}:\mathcal{S}^{\prime}%
(\mathbb{R}^{n-k}) \longrightarrow\mathcal{S}^{\prime}(\mathbb{R}^{n-k})$. Its
kernel, however, satisfies [Shubin \cite{sh87}, p.187]
\[
\mbox{Ker} \,\, \left(  \left.  A^{\prime}\right|  _{\mathcal{S}^{\prime
}(\mathbb{R}^{n-k})} \right)  = \mbox{Ker} \,\, \left(  \left.  A \right|
_{\mathcal{S}(\mathbb{R}^{n-k})} \right)  \, .
\]

This property is not shared by the continuous extension $\widetilde{A}%
^{\prime}:\mathcal{S}^{\prime}(\mathbb{R}^{n}) \longrightarrow\mathcal{S}%
^{\prime}(\mathbb{R}^{n})$ of $\widetilde{A}=\mathbb{E}_{s}[A]$. If Ker
$\widetilde{A}^{\prime}\not =\{0\}$ we always have
\[
(\mbox{Ker}\,\,\widetilde{A}^{\prime}) \cap(\mathcal{S}^{\prime}%
(\mathbb{R}^{n}) \backslash\mathcal{S}(\mathbb{R}^{n})) \not =\emptyset\, .
\]
We will prove this relation in proposition \ref{propGH2} where we will also
show that the operators $\widetilde{A}\in\widetilde{HG}_{\rho}^{m_{1},m_{0}%
}(\mathbb{R}^{2n})$ are not, in general, globally hypoelliptic.
\end{remark}

The spectral properties of the operators in the classes $\widetilde{HG}%
^{m_{1},m_{0}}_{\rho}(\mathbb{R}^{2n})$, for $m_{0}>0$, follow from
propositions \ref{spectraltheor} and \ref{propshubin}

\begin{proposition}
\label{propSP} Let $\widetilde{A}\in\widetilde{HG}_{\rho}^{m_{1},m_{0}%
}(\mathbb{R}^{2n})$ and let $m_{0}>0$. Assume in addition that $\widetilde{A}$
is formally self-adjoint. Then

(i) $\widetilde{A}$ has discrete spectrum $(\lambda_{j})_{j\in\mathbb{N}}$ and
$\lim_{j\rightarrow\infty}|\lambda_{j}|=\infty$;

(ii) A complete set of eigenfunctions of $\widetilde{A}$ is given by
$\Phi_{jl}=T_{\widetilde{S},\chi_{l}}\phi_{j}$ where $(\phi_{j})_{j}$ is a
complete set of eigenfunctions of the associated operator $A \in HG_{\rho
}^{m_{1},m_{0}}(\mathbb{R}^{2(n-k)})$ (such that $\widetilde{A}=\mathbb{E}%
_{s}[A]$) and $(\chi_{l} \in\mathcal{S}(\mathbb{R}^{k}))_{l}$ is an
orthonormal basis of $L^{2}(\mathbb{R}^{k})$;

(iii) We have $\Phi_{jl}\in\mathcal{S}(\mathbb{R}^{n})$ and $\left(  \Phi
_{jl}\right)  _{jl}$ form an orthonormal basis of $L^{2}(\mathbb{R}^{n})$.
\end{proposition}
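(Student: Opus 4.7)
The plan is to assemble the conclusion from three earlier ingredients: Shubin's classical result for $HG_\rho^{m_1,m_0}$ (Proposition~\ref{propshubin}), the symbol-level transfer of self-adjointness (Proposition~\ref{propSA}), and the spectral correspondence for symplectic extensions (Propositions~\ref{spectraltheor} and~\ref{propONB}). There is no substantially new work to do; the proof is essentially a verification that the hypotheses propagate correctly across the extension map.

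First, by Definition~\ref{definitionESC} there exist $k\in\{0,\dots,n-1\}$, a symplectomorphism $s\in\operatorname{Sp}(2n,\mathbb{R})$, and an operator $A\in HG_\rho^{m_1,m_0}(\mathbb{R}^{2(n-k)})$ such that $\widetilde{A}=\mathbb{E}_s[A]$. By Proposition~\ref{propSA}, the formal self-adjointness of $\widetilde{A}$ is equivalent to that of $A$, so $A$ is itself formally self-adjoint. Since $m_0>0$, Proposition~\ref{propshubin} then gives an orthonormal basis $(\phi_j)_j\subset\mathcal{S}(\mathbb{R}^{n-k})$ of $L^2(\mathbb{R}^{n-k})$ consisting of eigenfunctions of $A$, with real eigenvalues $(\lambda_j)$ satisfying $|\lambda_j|\to\infty$.

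Next, pick any orthonormal basis $(\chi_l)_l$ of $L^2(\mathbb{R}^k)$ with $\chi_l\in\mathcal{S}(\mathbb{R}^k)$ (e.g. Hermite functions). Let $\widetilde{S}\in\operatorname{Mp}(2n,\mathbb{R})$ be one of the metaplectic operators projecting onto $s$, and set $\Phi_{jl}=T_{\widetilde{S},\chi_l}\phi_j$. By Proposition~\ref{spectraltheor}(ii), each $\Phi_{jl}$ is an eigenfunction of $\widetilde{A}$ for the eigenvalue $\lambda_j$, and by Proposition~\ref{spectraltheor}(iv) together with Proposition~\ref{propONB} the family $(\Phi_{jl})_{j,l}$ is an orthonormal basis of $L^2(\mathbb{R}^n)$. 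Regularity is immediate: $\phi_j\otimes\chi_l\in\mathcal{S}(\mathbb{R}^n)$ and $\widetilde{S}^{-1}$ preserves the Schwartz class, so $\Phi_{jl}\in\mathcal{S}(\mathbb{R}^n)$. This yields (ii) and (iii).

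Finally, for (i), Proposition~\ref{spectraltheor}(i) asserts that $\widetilde{A}$ and $A$ share the same set of eigenvalues; since the eigenfunctions $(\Phi_{jl})_{j,l}$ form a basis of $L^2(\mathbb{R}^n)$, the spectrum is exhausted by $(\lambda_j)_j$, which is countable and real with $|\lambda_j|\to\infty$. The only point that requires a small comment is the \emph{interpretation} of ``discrete spectrum'' here: for fixed $j$ the eigenvalue $\lambda_j$ acquires infinite multiplicity (indexed by $l$), but as a subset of $\mathbb{C}$ the spectrum is still a discrete, unbounded sequence accumulating only at infinity, which is the content claimed in (i). This is the only mildly delicate point; everything else is a direct application of the cited results.
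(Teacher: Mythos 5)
Your proof is correct and follows essentially the same route as the paper's: reduce to $A\in HG_\rho^{m_1,m_0}(\mathbb{R}^{2(n-k)})$ via Definition~\ref{definitionESC}, transfer self-adjointness with Proposition~\ref{propSA}, invoke Proposition~\ref{propshubin} for the Shubin spectral result, and push it forward through Propositions~\ref{spectraltheor} and~\ref{propONB}; the paper simply states this chain of reductions in one sentence without spelling it out. Your closing remark about the interpretation of ``discrete spectrum'' is an astute one: for $k\geq 1$ each $\lambda_j$ has infinite multiplicity, so the spectrum is discrete only as a subset of $\mathbb{C}$ and not in the textbook sense of isolated eigenvalues of finite multiplicity — a genuine subtlety that the paper leaves implicit.
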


\begin{proof}
From definition \ref{definitionESC}, $\widetilde{A} \in\widetilde{HG}_{\rho
}^{m_{1},m_{0}}(\mathbb{R}^{2n})$ iff there is some operator $A \in{HG}_{\rho
}^{m_{1},m_{0}}(\mathbb{R}^{2(n-k)})$ such that $\widetilde{A}=\mathbb{E}%
_{s}[A]$ for some extension map $\mathbb{E}_{s}$. Recall, also from
proposition \ref{propSA}, that $\widetilde{A}$ is formally self-adjoint iff
$A$ is also formally self-adjoint. The results of the present theorem are then
corollaries of the spectral theorem \ref{spectraltheor} after taking into
account the spectral properties of Shubin's classes listed in Proposition
\ref{propshubin} above.
\end{proof}

We also have

\begin{proposition}
\label{propIV} Let $\widetilde{A} \in\widetilde{HG}_{\rho}^{m_{1},m_{0}%
}(\mathbb{R}^{2n})$. If Ker $\widetilde{A}=$Ker $\widetilde{A}^{*}=\{0\}$ then
$\widetilde{A}$ is invertible with inverse $\widetilde{A}^{-1} \in
\widetilde{HG}_{\rho}^{-m_{0},-m_{1}}(\mathbb{R}^{2n})$.
\end{proposition}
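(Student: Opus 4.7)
The plan is to reduce everything to the already-known Shubin invertibility result by pulling the hypotheses back through the extension map. By Definition \ref{definitionESC} we can write $\widetilde{A} = \mathbb{E}_s[A]$ for some $A \in HG_\rho^{m_1,m_0}(\mathbb{R}^{2(n-k)})$ and some symplectomorphism $s$, with associated metaplectic lift $\widetilde{S}$. The target will then be produced by applying $\mathbb{E}_s$ to $A^{-1}$ and invoking Corollary \ref{CorIN}, so the real work is to verify that the Shubin hypotheses $\ker A = \ker A^* = \{0\}$ hold.

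First I would translate the kernel conditions. By Proposition \ref{propSA} we have $\widetilde{A}^{*} = \mathbb{E}_s[A^{*}]$, and of course $A^*$ has Weyl symbol $\overline{a}$, which lies in the same Shubin symbol class $H\Gamma_\rho^{m_1,m_0}(\mathbb{R}^{2(n-k)})$ (the defining estimates are preserved by complex conjugation), so $A^* \in HG_\rho^{m_1,m_0}(\mathbb{R}^{2(n-k)})$ and in particular it is a continuous map $\mathcal{S}(\mathbb{R}^{n-k})\to\mathcal{S}(\mathbb{R}^{n-k})$. Lemma \ref{Kernelrelation} therefore applies to both $\widetilde{A}$ and $\widetilde{A}^*$ and yields
\begin{equation*}
\ker \widetilde{A} = \{0\} \ \Longleftrightarrow\ \ker A = \{0\}, \qquad
\ker \widetilde{A}^{*} = \{0\} \ \Longleftrightarrow\ \ker A^{*} = \{0\}.
\end{equation*}
Thus the assumption $\ker \widetilde{A} = \ker \widetilde{A}^{*} = \{0\}$ transports exactly to $\ker A = \ker A^{*} = \{0\}$.

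Next I would apply the Shubin invertibility theorem recalled at the end of Section \ref{subsectionSC}: an operator $A \in HG_\rho^{m_1,m_0}(\mathbb{R}^{2(n-k)})$ with $\ker A = \ker A^{*} = \{0\}$ is invertible, and $A^{-1} \in HG_\rho^{-m_0,-m_1}(\mathbb{R}^{2(n-k)})$. In particular $A^{-1}$ is a continuous map $\mathcal{S}(\mathbb{R}^{n-k})\to\mathcal{S}(\mathbb{R}^{n-k})$, so Corollary \ref{CorIN} gives
\begin{equation*}
\bigl(\mathbb{E}_s[A]\bigr)^{-1} \;=\; \mathbb{E}_s[A^{-1}],
\end{equation*}
that is, $\widetilde{A}^{-1} = \mathbb{E}_s[A^{-1}]$ exists. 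Finally, since $A^{-1} \in HG_\rho^{-m_0,-m_1}(\mathbb{R}^{2(n-k)})$ and $\widetilde{A}^{-1}$ arises from it by the same symplectic extension $\mathbb{E}_s$, Definition \ref{definitionESC} immediately yields $\widetilde{A}^{-1} \in \widetilde{HG}_\rho^{-m_0,-m_1}(\mathbb{R}^{2n})$, which is the desired conclusion.

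The one subtle point, which I expect to be the main (and really the only) obstacle, is consistency about which space the kernel conditions refer to: Lemma \ref{Kernelrelation} is stated for the restrictions $\widetilde{A},A:\mathcal{S}\to\mathcal{S}$, so I must read the hypothesis $\ker \widetilde{A} = \ker \widetilde{A}^{*} = \{0\}$ accordingly. On the Shubin side this is harmless (the remark after Lemma \ref{Kernelrelation} recalls that for $A \in HG_\rho^{m_1,m_0}$ the kernel on $\mathcal{S}$ coincides with the kernel of the $\mathcal{S}'$-extension), so the chain $\ker \widetilde{A} = \{0\} \Rightarrow \ker A = \{0\}$ plugs directly into Shubin's invertibility statement, and the argument closes without further analytic input.
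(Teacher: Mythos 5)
Your proposal is correct and follows essentially the same route as the paper's own proof: transport the kernel hypotheses to $A$ and $A^{*}$ via Lemma \ref{Kernelrelation}, invoke Shubin's invertibility theorem to get $A^{-1}\in HG_{\rho}^{-m_{0},-m_{1}}(\mathbb{R}^{2(n-k)})$, and conclude with Corollary \ref{CorIN} and Definition \ref{definitionESC}. Your added remarks (that $\widetilde{A}^{*}=\mathbb{E}_{s}[A^{*}]$ via Proposition \ref{propSA}, that $\overline{a}$ stays in the same symbol class, and the consistency of which space the kernels live in) only make explicit details the paper leaves implicit.
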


\begin{proof}
If $\widetilde{A} \in\widetilde{HG}_{\rho}^{m_{1},m_{0}}(\mathbb{R}^{2n})$
then also $\widetilde{A}^{*} \in\widetilde{HG}_{\rho}^{m_{1},m_{0}}%
(\mathbb{R}^{2n})$ and there are some $A,A^{*}\in{HG}_{\rho}^{m_{1},m_{0}%
}(\mathbb{R}^{2(n-k)})$ such that $\widetilde{A}=\mathbb{E}_{s}[A]$ and
$\widetilde{A}^{*}=\mathbb{E}_{s}[A^{*}]$. If Ker $\widetilde{A}=$Ker
$\widetilde{A}^{*}=\{0\}$ then, in view of Lemma \ref{Kernelrelation}, we also
have Ker $A=$Ker $A^{*}=\{0\}$. It follows from Shubin \cite{sh87} that $A$ is
invertible with inverse $A^{-1} \in{HG}_{\rho}^{-m_{0},-m_{1}}(\mathbb{R}%
^{2(n-k)})$. From Corollary \ref{CorIN} we conclude that $\mathbb{E}%
_{s}[A^{-1}] \in\widetilde{HG}_{\rho}^{-m_{0},-m_{1}}(\mathbb{R}^{2n})$ is the
inverse of $\widetilde{A}$.
\end{proof}

Contrary to what happens in ${HG}_{\rho}^{m_{1},m_{0}}(\mathbb{R}^{2n})$, the
operators $\widetilde{A} \in\widetilde{HG}_{\rho}^{m_{1},m_{0}}(\mathbb{R}%
^{2n})$ are not in general globally hypoelliptic. A set of sufficient
conditions for hypoellipticity in $\widetilde{HG}_{\rho}^{m_{1},m_{0}%
}(\mathbb{R}^{2n})$ is given by

\begin{proposition}
\label{propGH1} If $\widetilde{A} \in\widetilde{HG}_{\rho}^{m_{1},m_{0}%
}(\mathbb{R}^{2n})$ and Ker $\widetilde{A}=$Ker $\widetilde{A}^{*}=\{0\}$ then
the extension $\widetilde{A}^{\prime}:\mathcal{S}^{\prime}(\mathbb{R}%
^{n})\longrightarrow\mathcal{S}^{\prime}(\mathbb{R}^{n})$ is globally hypoelliptic.
\end{proposition}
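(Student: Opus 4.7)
My plan is to deduce global hypoellipticity from the invertibility result in Proposition \ref{propIV} together with the fact that the inverse, being itself in an extended Shubin class, maps $\mathcal{S}(\mathbb{R}^n)$ continuously into $\mathcal{S}(\mathbb{R}^n)$. The strategy is: construct the Schwartz-preserving inverse of $\widetilde{A}$, check that its tempered-distribution extension is a two-sided inverse of $\widetilde{A}'$, and then read off global hypoellipticity as a one-line consequence.

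First, I would invoke Proposition \ref{propIV}: the hypothesis $\mathrm{Ker}\,\widetilde{A}=\mathrm{Ker}\,\widetilde{A}^{\ast}=\{0\}$ gives $\widetilde{A}^{-1}\in\widetilde{HG}_{\rho}^{-m_{0},-m_{1}}(\mathbb{R}^{2n})$, and by construction $\widetilde{A}^{-1}=\mathbb{E}_{s}[A^{-1}]$ where $A^{-1}\in HG_{\rho}^{-m_{0},-m_{1}}(\mathbb{R}^{2(n-k)})$ is the Shubin-class inverse of the underlying operator $A$. By Shubin's basic properties of $HG_{\rho}^{-m_{0},-m_{1}}$ recalled in Subsection \ref{subsectionSC}, $A^{-1}$ is a continuous map $\mathcal{S}(\mathbb{R}^{n-k})\longrightarrow\mathcal{S}(\mathbb{R}^{n-k})$; applying Lemma \ref{lem} I conclude that $\widetilde{A}^{-1}=\mathbb{E}_{s}[A^{-1}]$ is continuous $\mathcal{S}(\mathbb{R}^{n})\longrightarrow\mathcal{S}(\mathbb{R}^{n})$.

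Next, Remark \ref{remark-duality} applies to both $\widetilde{A}$ and $\widetilde{A}^{-1}$, yielding continuous extensions $\widetilde{A}'$ and $(\widetilde{A}^{-1})'$ acting on $\mathcal{S}^{\prime}(\mathbb{R}^{n})$. The commutativity of the operations ${}^\prime$, $\mathbb{E}_s$ and ${}^{-1}$ asserted in that remark gives $(\widetilde{A}^{-1})'=(\widetilde{A}')^{-1}$ on all of $\mathcal{S}^{\prime}(\mathbb{R}^{n})$. Now suppose $\Psi\in\mathcal{S}^{\prime}(\mathbb{R}^{n})$ satisfies $\widetilde{A}'\Psi=:\Phi\in\mathcal{S}(\mathbb{R}^{n})$. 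Applying $(\widetilde{A}')^{-1}=(\widetilde{A}^{-1})'$ to both sides gives
\[
\Psi=(\widetilde{A}^{-1})'\Phi.
\]
Since $(\widetilde{A}^{-1})'$ restricts to $\widetilde{A}^{-1}$ on Schwartz functions and $\widetilde{A}^{-1}$ maps $\mathcal{S}(\mathbb{R}^{n})$ into $\mathcal{S}(\mathbb{R}^{n})$, it follows that $\Psi=\widetilde{A}^{-1}\Phi\in\mathcal{S}(\mathbb{R}^{n})$, which is exactly the global hypoellipticity condition (\ref{glob1}) for $\widetilde{A}'$.

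The only delicate point is the identification $(\widetilde{A}^{-1})'=(\widetilde{A}')^{-1}$ on $\mathcal{S}^{\prime}(\mathbb{R}^{n})$; a priori one only knows that both sides agree on $\mathcal{S}(\mathbb{R}^{n})$. This is exactly the content of the commutativity of ${}^{\prime}$ and ${}^{-1}$ in Remark \ref{remark-duality} (with the extended invertibility statement for Shubin classes used at the level of $A$ and transported by $\mathbb{E}_s$), so no separate argument is required. I expect this to be the main (and essentially only) conceptual obstacle in an otherwise formal deduction.
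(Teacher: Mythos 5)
Your proposal is correct and follows essentially the same route as the paper: invoke Proposition \ref{propIV} to get a Schwartz-preserving inverse $\widetilde{A}^{-1}$, use Remark \ref{remark-duality} to identify $(\widetilde{A}^{\prime})^{-1}=(\widetilde{A}^{-1})^{\prime}$ so that its restriction to $\mathcal{S}(\mathbb{R}^{n})$ is $\widetilde{A}^{-1}$, and conclude global hypoellipticity by applying this inverse to $\widetilde{A}^{\prime}\Psi\in\mathcal{S}(\mathbb{R}^{n})$. The "delicate point" you flag is handled in the paper exactly as you say, via the commutativity of $^{\prime}$, $\mathbb{E}_{s}$ and $^{-1}$ in Remark \ref{remark-duality}.
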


\begin{proof}
In view of the previous proposition, $\widetilde{A}:\mathcal{S}(\mathbb{R}%
^{n}) \longrightarrow\mathcal{S}(\mathbb{R}^{n})$ is invertible. Then its
extension by duality $\widetilde{A}^{\prime}:\mathcal{S}^{\prime}%
(\mathbb{R}^{n}) \longrightarrow\mathcal{S}^{\prime}(\mathbb{R}^{n})$ is also
invertible (Remark \ref{remark-duality}) and satisfies
\[
(\widetilde{A}^{\prime})^{-1}=(\widetilde{A}^{-1})^{\prime}\Longrightarrow\left.
(\widetilde{A}^{\prime})^{-1} \right|
_{\mathcal{S}(\mathbb{R}^{n})}= \widetilde{A}^{-1}\, .
\]
Hence $(\widetilde{A}^{\prime})^{-1}
\phi\in\mathcal{S}(\mathbb{R}^{n})$ for all
$\phi\in\mathcal{S}(\mathbb{R}^{n})$. It follows that
$\widetilde{A}^{\prime }:\mathcal{S}^{\prime}(\mathbb{R}^{n})
\longrightarrow\mathcal{S}^{\prime }(\mathbb{R}^{n})$ is globally
hypoelliptic. In fact,
\[
\widetilde{A}^{\prime}\psi=\phi\in\mathcal{S}(\mathbb{R}^{n})
\Longrightarrow \psi= (\widetilde{A}^{\prime})^{-1}
\phi\in\mathcal{S}(\mathbb{R}^{n}) \, .
\]

\end{proof}

However,

\begin{proposition}
\label{propGH2} Let $\widetilde{A} \in\widetilde{HG}_{\rho}^{m_{1},m_{0}%
}(\mathbb{R}^{2n})$. If Ker $\widetilde{A}^{\prime}\not =\{0\}$ then
$\widetilde{A}^{\prime}$ is not globally hypoelliptic.
\end{proposition}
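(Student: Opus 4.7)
The plan is to produce $\Phi\in\mathcal{S}^{\prime}(\mathbb{R}^{n})\setminus\mathcal{S}(\mathbb{R}^{n})$ with $\widetilde{A}^{\prime}\Phi=0$; since $0\in\mathcal{S}(\mathbb{R}^{n})$, such a $\Phi$ directly violates the global hypoellipticity criterion (\ref{glob1}). I would build $\Phi$ in two steps: first push the hypothesis on $\widetilde{A}^{\prime}$ down to produce a nonzero Schwartz element of $\operatorname*{Ker}A$, then lift it back using a tensor product with a genuinely non-Schwartz tempered distribution.

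\textbf{Step 1 (descent to $A$).} Let $\Psi\in\mathcal{S}^{\prime}(\mathbb{R}^{n})\setminus\{0\}$ satisfy $\widetilde{A}^{\prime}\Psi=0$. Dualizing the second intertwining relation of Proposition \ref{proptic1} yields $A^{\prime}T_{\widetilde{S},\chi}^{\ast}\Psi=T_{\widetilde{S},\chi}^{\ast}\widetilde{A}^{\prime}\Psi=0$ in $\mathcal{S}^{\prime}(\mathbb{R}^{n-k})$ for every $\chi\in\mathcal{S}(\mathbb{R}^{k})$. By (\ref{tadj2}) one has $\langle T_{\widetilde{S},\chi}^{\ast}\Psi,\psi\rangle=\langle\widetilde{S}\Psi,\overline{\chi}\otimes\psi\rangle$; if the left-hand side vanished for every $\chi$ and every $\psi$, density of simple tensors in $\mathcal{S}(\mathbb{R}^{n})$ would force $\widetilde{S}\Psi=0$ and hence $\Psi=0$. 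So I pick $\chi_{0}$ for which $\psi_{0}:=T_{\widetilde{S},\chi_{0}}^{\ast}\Psi\neq 0$. Then $A^{\prime}\psi_{0}=0\in\mathcal{S}(\mathbb{R}^{n-k})$, and because $A\in HG_{\rho}^{m_{1},m_{0}}(\mathbb{R}^{2(n-k)})$ is globally hypoelliptic (Proposition \ref{propshubin}), $\psi_{0}\in\mathcal{S}(\mathbb{R}^{n-k})\setminus\{0\}$ with $A\psi_{0}=0$.

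\textbf{Step 2 (lift to a non-Schwartz kernel element).} Fix any $\chi^{\prime}\in\mathcal{S}^{\prime}(\mathbb{R}^{k})\setminus\mathcal{S}(\mathbb{R}^{k})$ (for instance $\chi^{\prime}=\delta$) and set $\Phi:=\widetilde{S}^{-1}(\psi_{0}\otimes\chi^{\prime})\in\mathcal{S}^{\prime}(\mathbb{R}^{n})$. Since $\widetilde{S}$ is a bijection of both $\mathcal{S}(\mathbb{R}^{n})$ and $\mathcal{S}^{\prime}(\mathbb{R}^{n})$, it suffices to see $\psi_{0}\otimes\chi^{\prime}\notin\mathcal{S}(\mathbb{R}^{n})$: were it Schwartz, picking $\phi\in\mathcal{S}(\mathbb{R}^{n-k})$ with $c:=\langle\psi_{0},\phi\rangle\neq 0$ and pairing in the $x$-variable would realize $c\chi^{\prime}$ as a Schwartz function of $y$, contradicting the choice of $\chi^{\prime}$. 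For $\widetilde{A}^{\prime}\Phi=0$, I would invoke the conjugation formula (\ref{syco1tilde}) extended to $\mathcal{S}^{\prime}$, together with the identity
\[
\widetilde{A}_{I}^{\prime}(\psi_{0}\otimes\chi^{\prime})=(A\psi_{0})\otimes\chi^{\prime},
\]
which follows from (\ref{aa2}) by duality-testing on simple tensors $\phi_{1}\otimes\phi_{2}$ and density (using $\psi_{0}\in\mathcal{S}$). Since $A\psi_{0}=0$, this gives $\widetilde{A}^{\prime}\Phi=\widetilde{S}^{-1}(0)=0$.

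Thus $\Phi\in\mathcal{S}^{\prime}(\mathbb{R}^{n})\setminus\mathcal{S}(\mathbb{R}^{n})$ while $\widetilde{A}^{\prime}\Phi=0\in\mathcal{S}(\mathbb{R}^{n})$, contradicting (\ref{glob1}) and proving that $\widetilde{A}^{\prime}$ is not globally hypoelliptic. The main technical nuisance I anticipate is bookkeeping the various $\mathcal{S}^{\prime}$-extensions: the intertwining formulas of Proposition \ref{proptic1} and the tensor-product identity (\ref{aa2}) are stated on Schwartz space, and one must verify---routinely but carefully---that they persist when one factor becomes a tempered distribution rather than a Schwartz function. Once those extensions are in hand, the rest of the argument is purely algebraic.
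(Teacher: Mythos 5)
Your proof is correct, and it differs from the paper's in a way worth noting. The paper argues by cases: either $\operatorname*{Ker}\widetilde{A}^{\prime}$ already meets $\mathcal{S}^{\prime}\setminus\mathcal{S}$ (done), or $\operatorname*{Ker}\widetilde{A}^{\prime}\subset\mathcal{S}(\mathbb{R}^{n})$, in which case $\operatorname*{Ker}\widetilde{A}\neq\{0\}$ and Lemma \ref{Kernelrelation} supplies a Schwartz $\phi\neq 0$ with $A\phi=0$, after which the same lift $T_{\widetilde{S},\chi}\phi$ with $\chi\in\mathcal{S}^{\prime}\setminus\mathcal{S}$ produces a non-Schwartz kernel element (a contradiction that forces the first case). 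You avoid the case split entirely: you descend directly via $T_{\widetilde{S},\chi}^{\ast}$ (re-running part of the nondegeneracy argument from Lemma \ref{Kernelrelation}'s proof, via Proposition \ref{propONB}/formula (\ref{tadj2}), to find a $\chi_{0}$ with $\psi_{0}=T_{\widetilde{S},\chi_{0}}^{\ast}\Psi\neq 0$), and then invoke the \emph{global hypoellipticity of} $A$ itself to conclude $\psi_{0}\in\mathcal{S}(\mathbb{R}^{n-k})$. The lift in Step 2 is then the same as the paper's. The trade-off is that the paper's route is lighter (it needs only the kernel-comparison Lemma \ref{Kernelrelation} and no appeal to Shubin's hypoellipticity theorem) but its two-case structure is slightly awkward, since the second case refutes its own hypothesis; your route is structurally cleaner but leans on the heavier fact that $A\in HG_{\rho}^{m_{1},m_{0}}$ is globally hypoelliptic. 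Both proofs share, and must justify, the same routine technicality: extending the intertwining relations of Proposition \ref{proptic1} (and, in your case, also the tensor identity (\ref{aa4}) and the second relation of (\ref{inter})) from Schwartz arguments to tempered-distribution arguments; you flag this explicitly, and the paper dismisses it as ``a trivial extension,'' so neither is more rigorous on this point.
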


\begin{proof}
If (Ker $\widetilde{A}^{\prime}) \cap(\mathcal{S}^{\prime}(\mathbb{R}^{n})
\backslash\mathcal{S}(\mathbb{R}^{n})) \not =\emptyset$ then $\widetilde{A}%
^{\prime}$ is obviously not global hypoelliptic.

Assume that Ker $\widetilde{A}^{\prime}\subset\mathcal{S}(\mathbb{R}^{n})$.
Let $A \in{HG}_{\rho}^{m_{1},m_{0}}(\mathbb{R}^{2(n-k)})$ be such that
$\widetilde{A}=\mathbb{E}_{s}[A]$. Since Ker $\widetilde{A}=$ Ker
$\widetilde{A}^{\prime}\not =\{0\}$, it follows from Lemma
\ref{Kernelrelation} that Ker $A \not =\{0\}$. Let then $\phi\in
\mathcal{S}(\mathbb{R}^{n-k})\backslash\{0\}$ be such that $A\phi=0$ and let
$\chi\in\mathcal{S}^{\prime}(\mathbb{R}^{k})\backslash\mathcal{S}%
(\mathbb{R}^{k})$. Then $\Phi= T_{\widetilde{S},\chi} \phi\notin%
\mathcal{S}(\mathbb{R}^{n})$ and, using a trivial extension of the first
intertwining relation (\ref{inter}) to the case $\chi\notin\mathcal{S}%
(\mathbb{R}^{n})$, we get
\[
\widetilde{A}^{\prime}\Phi=\widetilde{A}^{\prime}T_{\widetilde{S},\chi} \phi=
T_{\widetilde{S},\chi} A^{\prime}\phi=0 \in\mathcal{S}(\mathbb{R}^{n})\, .
\]
Hence, Ker $\widetilde{A}^{\prime}\not \subset \mathcal{S}(\mathbb{R}^{n})$
and $\widetilde{A}^{\prime}$ is not globally hypoelliptic.
\end{proof}

\section{Examples\label{secExa}}

In this section we present a few examples of operators that belong to the new
classes $\widetilde{HG}_{\rho}^{m_{1},m_{0}}(\mathbb{R}^{2n})$ and which play
an important role in mathematical physics. We also determine the intertwining
operators that generate the complete sets of eigenfunctions for each of these
operators. Our results recover the spectral properties that were obtained in
some previous studies \cite{CPDE,Bopp1,JPDOA}.

\subsection{The Landau Hamiltonian\label{secLandau}}

The Landau Hamiltonian \cite{lali97}
\[
\widetilde{H}_{L}=-(\partial_{x}^{2}+\partial_{y}^{2})+i(x\partial
_{y}-y\partial_{x})+\tfrac{1}{4}(x^{2}+y^{2})
\]
describes the motion of a test particle in the presence of a magnetic field.
The spectral properties of this operator have been studied in detail in
\cite{CPDE,JPDOA}. Here we show that $\widetilde{H}_{L} \in\widetilde{HG}%
_{1}^{2,2}(\mathbb{R}^{4})$ and so not only its spectral properties, but also
its invertibility and hypoellipticity properties, follow directly from
propositions \ref{propSP},\ref{propIV} and \ref{propGH1}.

\begin{proposition}
\label{propL1} We have $\widetilde{H}_{L} \in\widetilde{HG}_{1}^{2,2}%
(\mathbb{R}^{4})$.
\end{proposition}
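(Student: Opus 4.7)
The plan is to exhibit $\widetilde{H}_L$ as the symplectic dimensional extension $\mathbb{E}_s[H_0]$ of the one-dimensional harmonic oscillator $H_0 = -\partial_x^2 + x^2$, for a suitable $s \in \operatorname*{Sp}(4,\mathbb{R})$. The Weyl symbol of $H_0$ is $a(x,\xi) = x^2 + \xi^2$; it is a positive definite quadratic form, so $|a(z)| = |z|^2$ and each derivative $\partial^\alpha a$ is a polynomial of degree $2-|\alpha|$ bounded by $C_\alpha |a(z)| |z|^{-|\alpha|}$ for $|z|$ large. Hence $a \in H\Gamma_1^{2,2}(\mathbb{R}^2)$ and $H_0 \in HG_1^{2,2}(\mathbb{R}^2)$. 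Once the symplectomorphism $s$ is produced, Definition~\ref{definitionESC} gives $\widetilde{H}_L \in \widetilde{HG}_1^{2,2}(\mathbb{R}^4)$ at once.

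First I would compute the Weyl symbol $\widetilde{a}$ of $\widetilde{H}_L$. Since $x$ commutes with $\partial_y$ and $y$ with $\partial_x$ the mixed products require no Weyl ordering correction, and the standard correspondence $\partial_{x_j} \leftrightarrow i\xi_j$ yields
\[
\widetilde{a}(x,y,\xi,\eta) = \xi^2 + \eta^2 + y\xi - x\eta + \tfrac{1}{4}(x^2+y^2).
\]
Completing squares reveals the key identity
\[
\widetilde{a}(x,y,\xi,\eta) = \bigl(\eta - \tfrac{1}{2}x\bigr)^2 + \bigl(\xi + \tfrac{1}{2}y\bigr)^2,
\]
which tells us exactly how to choose two of the new symplectic coordinates: set $x' = \eta - \tfrac{1}{2}x$ and $\xi' = \xi + \tfrac{1}{2}y$. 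If these can be completed to a symplectic quadruple $(x',y',\xi',\eta') = s(x,y,\xi,\eta)$, then $(a \otimes 1_2)\circ s = a(x',\xi') = \widetilde{a}$, which is precisely $\widetilde{a} = E_s[a]$, hence $\widetilde{H}_L = \mathbb{E}_s[H_0]$ via the diagram~\eqref{diag}.

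The main step is producing the symplectic completion. Motivated by the structure of the identity above, I would try
\[
y' = \xi - \tfrac{1}{2}y, \qquad \eta' = \eta + \tfrac{1}{2}x,
\]
and verify that $s$ lies in $\operatorname*{Sp}(4,\mathbb{R})$ by checking the six canonical Poisson brackets of $(x',y',\xi',\eta')$ against the target values $\{x',\xi'\} = \{y',\eta'\} = 1$ and the other four brackets equal to $0$. Each bracket is a one-line linear computation; equivalently one writes the $4 \times 4$ matrix $M$ of $s$ in the basis $(x,y,\xi,\eta)$ and checks $M^T J M = J$.

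The only real obstacle is careful bookkeeping: the Weyl quantization of $i(x\partial_y - y\partial_x)$ must be done with the right sign conventions (it is easy to drop a factor of $i$), and the symplectic completion of $(x',\xi')$ must be verified rather than merely guessed. Both verifications are routine linear algebra, and once in place the inclusion $\widetilde{H}_L \in \widetilde{HG}_1^{2,2}(\mathbb{R}^4)$ is an immediate consequence of Definition~\ref{definitionESC}.
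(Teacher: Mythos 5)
Your route is the same as the paper's: compute the Weyl symbol of $\widetilde{H}_{L}$, complete squares to recognize it as the harmonic oscillator symbol composed with a linear map, and exhibit that map as an element of $\operatorname*{Sp}(4,\mathbb{R})$ so that Definition \ref{definitionESC} applies. The symbol computation and the identity $\widetilde{a}=(\eta-\tfrac{1}{2}x)^{2}+(\xi+\tfrac{1}{2}y)^{2}$ are correct. The gap is in the one nontrivial step, the symplectomorphism itself: the map you propose is not symplectic, and the defect cannot be repaired by a better completion. With the canonical brackets normalized by $\{x,\xi\}=\{y,\eta\}=1$, your pair gives $\{x',\xi'\}=\{\eta-\tfrac{1}{2}x,\ \xi+\tfrac{1}{2}y\}=-\tfrac{1}{2}-\tfrac{1}{2}=-1$, whereas any $s\in\operatorname*{Sp}(4,\mathbb{R})$ must reproduce $\{x',\xi'\}=+1$; since this bracket involves only $x'$ and $\xi'$, no choice of $y',\eta'$ can fix it. Concretely, with your completion $y'=\xi-\tfrac{1}{2}y$, $\eta'=\eta+\tfrac{1}{2}x$ the matrix $M$ of the map in the coordinates $(x,y;\xi,\eta)$ satisfies $M^{T}JM=-J$, i.e.\ it is anti-symplectic, so the verification you rightly flag as necessary would fail at exactly this point.

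The repair is a sign flip, harmless because the symbol depends only on the squares of $x'$ and $\xi'$: take $x'=\tfrac{1}{2}x-\eta$, $\xi'=\xi+\tfrac{1}{2}y$, and complete with $y'=\tfrac{1}{2}y-\xi$, $\eta'=\eta+\tfrac{1}{2}x$. Then $\{x',\xi'\}=\{y',\eta'\}=1$ and the four cross brackets vanish, and the resulting matrix is precisely the paper's $s$ in (\ref{sL}). With that correction your argument coincides with the paper's proof: $h_{0}(x,\xi)=x^{2}+\xi^{2}\in H\Gamma_{1}^{2,2}(\mathbb{R}^{2})$, $\widetilde{h}_{L}=E_{s}[h_{0}]$ for this $s\in\operatorname*{Sp}(4,\mathbb{R})$, hence $\widetilde{H}_{L}\in\widetilde{HG}_{1}^{2,2}(\mathbb{R}^{4})$.
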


\begin{proof}
Recall that the harmonic oscillator Hamiltonian
\[
H_{0}=-\partial_{x}^{2}+x^{2}%
\]
is a Weyl operator with symbol $h_{0}(x,\xi)=\xi^{2}+x^{2}$. We have, of
course, $h_{0}\in{H\Gamma}_{1}^{2,2}(\mathbb{R}^{2})$. The Weyl symbol of the
Landau Hamiltonian is
\[
\widetilde{h}_{L}(x,y;\xi,\eta)=\xi^{2}+\eta^{2}-x\eta+y\xi+\frac{1}{4}%
(x^{2}+y^{2})
\]
and one can easily check that $\widetilde{h}_{L}=E_{s}[h_{0}]$
where
$E_{s}:\mathcal{S}^{\prime}(\mathbb{R}^{2})\longrightarrow\mathcal{S}^{\prime
}(\mathbb{R}^{4})$ is the symplectic dimensional extension map
associated with $s\in$ $\operatorname*{Sp}(4,\mathbb{R})$
\begin{equation}
{s}=%
\begin{pmatrix}
\frac{1}{2}I & -D\\
\frac{1}{2}D & I
\end{pmatrix}
\text{ ,}\ D=%
\begin{pmatrix}
0 & 1\\
1 & 0
\end{pmatrix}
.\label{sL}%
\end{equation}
Hence, $\widetilde{h}_{L}\in\widetilde{H\Gamma}_{1}^{2,2}(\mathbb{R}^{4})$ and
so $\widetilde{H}_{L}\in\widetilde{HG}_{1}^{2,2}(\mathbb{R}^{4})$.
\end{proof}

Since Ker $H_{0}=\{0\}$, we conclude from Lemma \ref{Kernelrelation} that also
Ker $\widetilde{H}_{L}=\{0\}$. Since $\widetilde{H}_{L}$ is formally
self-adjoint, it is also globally hypoelliptic and invertible with inverse
$\widetilde{H}_{L}^{-1} \in\widetilde{HG}_{1}^{-2,-2}(\mathbb{R}^{4})$
(Propositions \ref{propIV} and \ref{propGH1}).

Moreover, $\widetilde{H}_{L}$ displays a discrete spectrum (Proposition
\ref{propSP}). The intertwining operators (\ref{tics}) that generate the
eigenfunctions of $\widetilde{H}_{L}$ from those of $H_{0}$ are given by

\begin{proposition}
\label{propL2} The intertwiners for the Landau Hamiltonian are explicitly
\begin{equation}
\label{TL}T_{\widetilde{S},\chi}\phi(x,y)=-i\left(  \frac{\pi}{2}\right)
^{1/2}W(\phi,\overline{\hat{\chi}})\left(  \tfrac{x}{2},\tfrac{y}{2}\right)
\end{equation}
where $W$ is the cross Wigner function
\begin{equation}
\label{CWF}W(\phi,\psi)(x,y)=\left(  \frac{1}{2\pi}\right)  ^{n}%
\int_{\mathbb{R}^{n}}e^{-iy\cdot x^{\prime}}\phi(x+\tfrac{1}{2}x^{\prime
})\overline{\psi(x-\tfrac{1}{2}x^{\prime})}\, dx^{\prime}%
\end{equation}
for $n=1$ and $\hat\chi$ is the Fourier transform of $\chi$.
\end{proposition}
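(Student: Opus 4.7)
The plan is to compute $T_{\widetilde{S},\chi}\phi = \widetilde{S}^{-1}(\phi\otimes\chi)$ by writing down the metaplectic operator $\widetilde{S}^{-1}$ explicitly via (\ref{swm}) and then matching the resulting oscillatory integral with the cross-Wigner expression (\ref{CWF}).

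First, I would determine a quadratic generating function for a metaplectic lift of $s^{-1}$. Unpacking the block form (\ref{sL}) gives
\[
s(x,y;\xi,\eta) = \bigl(\tfrac{1}{2}x - \eta,\ \tfrac{1}{2}y - \xi;\ \tfrac{1}{2}y + \xi,\ \tfrac{1}{2}x + \eta\bigr);
\]
inverting these relations to express all four momenta in terms of the position pair $((x,y),(x',y'))$ and substituting into the characterization (\ref{wgen}) (viewing $(x,y)$ as the output and $(x',y')$ as the input of $s^{-1}$), one obtains a compatible first-order PDE system whose straightforward integration yields
\[
W(x,y;x',y') = \tfrac{1}{2}xy - xy' - x'y + x'y'.
\]
The associated off-diagonal matrix $L$ equals $D$, so $|\det L|=1$ and $\Delta_m(W) = i^m$, where the integer $m$ encodes the choice of one of the two metaplectic lifts.

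Second, I would substitute this $W$ into (\ref{swm}) with $N = n+k = 2$ to obtain a double oscillatory integral for $\widetilde{S}^{-1}(\phi\otimes\chi)(x,y)$. Pulling the factor $e^{ixy/2}$ out of the integral, the remaining $y'$-dependence of the exponent is linear in $y'$, so the inner $y'$-integration collapses to a Fourier transform $\hat{\chi}(x-x')$ (up to a Fourier normalization constant). What is left is a one-dimensional integral of the form $e^{ixy/2}\int e^{-ix'y}\phi(x')\hat{\chi}(x-x')\,dx'$ multiplied by a universal constant.

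Third, I would compute $W(\phi,\overline{\hat{\chi}})(x/2,y/2)$ directly from the definition (\ref{CWF}) and, via the substitution $u = (x+x')/2$, show that it reduces to exactly the same one-dimensional integral multiplied by $e^{ixy/2}/\pi$. Equating the two expressions then determines the overall prefactor to be $-i(\pi/2)^{1/2}$ and simultaneously pins down the Maslov index $m$ modulo~$4$.

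The main obstacle will be the bookkeeping of constants rather than any conceptual difficulty: three separate normalizations---the $(2\pi i)^{-1}$ from (\ref{swm}), the Fourier-transform convention implicit in the symbol $\hat{\chi}$, and the $(2\pi)^{-1}$ from (\ref{CWF})---must combine with the phase $i^m$ coming from the metaplectic cover to produce exactly the single explicit coefficient $-i(\pi/2)^{1/2}$ stated in the proposition.
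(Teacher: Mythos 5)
Your proposal is correct and follows essentially the same route as the paper: you derive the generating quadratic form $W(x,y;x',y')=\tfrac{1}{2}xy-xy'-x'y+x'y'$ for $s^{-1}$ (identical to the paper's), represent $\widetilde{S}^{-1}$ by the Fourier integral operator (\ref{swm}) with $|\det L|=1$, integrate out $y'$, and match the remaining one-dimensional integral with the cross-Wigner transform, and the constants indeed combine to $i^{m-1}(\pi/2)^{1/2}$, which gives the stated $-i(\pi/2)^{1/2}$ for the lift with $m=0$. The only cosmetic difference is that you apply $\widetilde{S}^{-1}$ directly to $\phi\otimes\chi$ and recognize $\hat{\chi}$ after the $y'$-integration, whereas the paper evaluates $\widetilde{S}^{-1}(\phi\otimes\overline{\hat{\chi}})$ and relabels via $\varphi=\overline{\hat{\chi}}\Rightarrow\chi=\overline{\hat{\varphi}}$ at the end.
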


\begin{proof}
We have from (\ref{tics}) that
\[
T_{\widetilde{S},\chi}\phi=\widetilde{S}^{-1}(\phi\otimes\chi)
\]
where $\widetilde{S}^{-1}$ is one of the two metaplectic operators that
projects into the symplectomorphism which is the inverse of $s\in$
$\operatorname*{Sp}(4,\mathbb{R})$ given by (\ref{sL})
\[
{s}^{-1}=%
\begin{pmatrix}
I & D\\
-\frac{1}{2}D & \frac{1}{2}I
\end{pmatrix}
\text{ , }\ D=%
\begin{pmatrix}
0 & 1\\
1 & 0
\end{pmatrix}
.
\]
The quadratic form $W(x,y;x^{\prime},y^{\prime})$ that generates ${s}^{-1}$
satisfies (cf. formula (\ref{wgen}))
\begin{equation}
(x,y;\xi,\eta)=s^{-1}(x^{\prime},y^{\prime};\xi^{\prime},\eta^{\prime
})\Longleftrightarrow\left\{
\begin{array}
[c]{c}%
(\xi,\eta)=(\partial_{x}W,\partial_{y}W)\\
(\xi^{\prime},\eta^{\prime})=-(\partial_{x^{\prime}}W,\partial_{y^{\prime}}W)
\end{array}
\right.  \,\label{defW}%
\end{equation}
We have
\[
W(x,y;x^{\prime},y^{\prime})=-(y\cdot x^{\prime}+x\cdot y^{\prime})+x^{\prime
}\cdot y^{\prime}+\tfrac{1}{2}x\cdot y\,.
\]
Choosing the Maslov index $m=0$ in (\ref{deltaw}) and noticing that $|\det$
$L|=1$ we get from (\ref{swm}):
\begin{equation}
\widetilde{S}^{-1}\Psi(x,y)=\left(  \tfrac{1}{2\pi i}\right)  \int%
_{\mathbb{R}^{2}}e^{iW(x,y;x^{\prime},y^{\prime})}\Psi(x^{\prime},y^{\prime
})\,dx^{\prime}dy^{\prime}.\label{stipsi}%
\end{equation}
Setting $\Psi(x,y)=\phi(x)\overline{\hat{\chi}(y)}$ and using
\begin{equation}
\overline{\hat{\chi}(y)}=\left(  \tfrac{1}{2\pi}\right)  ^{n/2}\int%
_{\mathbb{R}^{n}}e^{iz\cdot y}\overline{\chi(z)}\,dz\label{kihat}%
\end{equation}
with $n=1$, we get
\[
T_{\widetilde{S},\overline{\hat{\chi}}}\phi(x,y)=-i\left(  \tfrac{1}{2\pi
}\right)  ^{3/2}\int_{\mathbb{R}^{3}}e^{iW(x,y;x^{\prime},y^{\prime}%
)}e^{izy^{\prime}}\phi(x^{\prime})\overline{\chi(z)}\,dzdx^{\prime}dy^{\prime
}\,.
\]
A trivial calculation then shows that
\[
\widetilde{S}^{-1}\phi\otimes\overline{\hat{\chi}}(x,y)=\left(  \tfrac{-i}%
{2}\right)  \left(  \tfrac{1}{2\pi}\right)  ^{1/2}\int_{\mathbb{R}}%
e^{-i\tfrac{y}{2}\cdot x^{\prime}}\phi(\tfrac{1}{2}(x+x^{\prime}%
))\overline{\chi(\tfrac{1}{2}(x-x^{\prime}))}\,dx^{\prime}%
\]
and so
\[
T_{\widetilde{S},\overline{\hat{\chi}}}\phi(x,y)=-i\left(  \frac{\pi}%
{2}\right)  ^{1/2}W(\phi,\chi)\left(  \tfrac{x}{2},\tfrac{y}{2}\right)  .
\]
Taking into account that $\varphi=\overline{\hat{\chi}}\Longrightarrow
\chi=\overline{\hat{\varphi}}$ we easily obtain (\ref{TL}).
\end{proof}

This result coincides exactly with the formula for the intertwiners that was
obtained in \cite{CPDE,JPDOA}.

\subsection{Bopp pseudo-differential operators}

\label{secBopp}

Let $a \in\mathcal{S}^{\prime}(\mathbb{R}^{2n})$ and consider the operator
defined by
\begin{equation}
\widetilde{A}_{B}:\mathcal{S}(\mathbb{R}^{2n}) \longrightarrow\mathcal{S}%
^{\prime}(\mathbb{R}^{2n});\,\, \Psi\longrightarrow\widetilde{A}_{B}\Psi=a
\star\Psi
\end{equation}
where $\star$ is the Moyal star-product
\begin{equation}
a \star b (z)=\left(  \frac{1}{4\pi} \right)  ^{2n} \int_{\mathbb{R}%
^{2n}\times\mathbb{R}^{2n}}e^{-\tfrac{i}{2}\sigma_{n}(v,u)} a\left(
z+\tfrac{1}{2}u\right)  b\left(  z-\tfrac{1}{2}v\right)  dudv
\end{equation}
familiar from deformation quantization \cite{BFFLS1,BFFLS2,DiPa1,DiPa2}. The
operators $\widetilde{A}_{B}=a \star$ are usually called "Bopp
pseudo-differential operators" \cite{bopp} (or "Moyal-Weyl pseudo-differential
operators" \cite{DiGoLuPa3}) and are very important in deformation
quantization where they were used, for instance, to obtain a precise
formulation of the spectral problem \cite{Bopp1}.

Moreover, the map $a \longrightarrow\widetilde{A}_{B}=a\star$ yields a phase
space representation of quantum mechanics \cite{DiGoLuPa3}, called the "Moyal
representation", that finds interesting applications in semiclassical dynamics
\cite{Bondar1}, hybrid dynamics \cite{Bondar2} and non-commutative quantum
mechanics \cite{BaBeDiPa,BaDiPa,DiGoLuPa2,DiGoLuPa1}.

In this section we show that for every $a \in{H\Gamma}_{\rho}^{m_{1},m_{0}%
}(\mathbb{R}^{2n})$, the operator $\widetilde{A}_{B}=a \star$ belongs to
$\widetilde{HG}_{\rho}^{m_{1},m_{0}}(\mathbb{R}^{4n})$. Hence its spectral,
invertibility and hypoellipticity properties are given by our Propositions
\ref{propSP},\ref{propIV},\ref{propGH1} and \ref{propGH2}.

We start by recalling that

\begin{lemma}
Let $a\in\mathcal{S}^{\prime}(\mathbb{R}^{2n})$. Then the operator formally
$\widetilde{A}_{B}=a \star$ is a Weyl operator $\mathcal{S}(\mathbb{R}^{2n})
\longrightarrow\mathcal{S}^{\prime}(\mathbb{R}^{2n})$ with symbol
\begin{equation}
\label{Bopp-Weyl}\widetilde{a}_{B}(x,y;\xi,\eta)=a(x-\frac{1}{2}\eta
,y+\frac{1}{2}\xi).
\end{equation}

\end{lemma}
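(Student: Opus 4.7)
The plan is to reduce the identity to a linear change of variables in the Moyal integral. I would first treat the case $a\in\mathcal{S}(\mathbb{R}^{2n})$ (and $\Psi\in\mathcal{S}(\mathbb{R}^{2n})$) in which the Moyal formula is absolutely convergent, and then extend to distributional $a$ by continuity of the Weyl correspondence.

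Writing $z=(x,y)$, $u=(u_{1},u_{2})$, $v=(v_{1},v_{2})\in\mathbb{R}^{n}\times\mathbb{R}^{n}$, so that $\sigma_{n}(v,u)=v_{2}\cdot u_{1}-u_{2}\cdot v_{1}$, I would perform the change of variables
\[
v_{1}=2(x-x^{\prime}),\qquad v_{2}=2(y-y^{\prime}),\qquad u_{1}=x^{\prime}-x-\eta,\qquad u_{2}=y^{\prime}-y+\xi,
\]
mapping $(u_{1},u_{2},v_{1},v_{2})\mapsto(x^{\prime},y^{\prime},\xi,\eta)$. A direct computation then shows that
\[
z+\tfrac{1}{2}u=\Bigl(\tfrac{x+x^{\prime}}{2}-\tfrac{\eta}{2},\tfrac{y+y^{\prime}}{2}+\tfrac{\xi}{2}\Bigr),\qquad z-\tfrac{1}{2}v=(x^{\prime},y^{\prime}),
\]
and the cross terms in $-\tfrac{1}{2}\sigma_{n}(v,u)$ collapse so that the Moyal phase becomes exactly the Weyl phase $i[(x-x^{\prime})\cdot\xi+(y-y^{\prime})\cdot\eta]$. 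A block-determinant calculation for the Jacobian (reordering rows/columns to expose two $2\times 2$ blocks in $n\times n$ identities) yields $|J|=4^{n}$, which combines with the Moyal prefactor as $(4\pi)^{-2n}\cdot 4^{n}=(2\pi)^{-2n}$; this is precisely the Weyl prefactor in (\ref{WeylPDO}) applied to $\mathbb{R}^{2n}$ (i.e.\ with $n$ replaced by $2n$). Reading off the resulting amplitude one obtains $\widetilde{A}_{B}\Psi=\mathrm{Op}^{w}(\widetilde{a}_{B})\Psi$ with the claimed symbol (\ref{Bopp-Weyl}).

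For general $a\in\mathcal{S}^{\prime}(\mathbb{R}^{2n})$, the plan is to use the fact that the map $a\mapsto\widetilde{a}_{B}(x,y;\xi,\eta)=a(x-\tfrac{1}{2}\eta,y+\tfrac{1}{2}\xi)$ is the pullback by the linear automorphism $(x,y;\xi,\eta)\mapsto(x-\tfrac{1}{2}\eta,y+\tfrac{1}{2}\xi)$ of $\mathbb{R}^{4n}$ into $\mathbb{R}^{2n}$, composed with the trivial extension $a\otimes 1_{2n}$. Hence $\widetilde{a}_{B}\in\mathcal{S}^{\prime}(\mathbb{R}^{4n})$ whenever $a\in\mathcal{S}^{\prime}(\mathbb{R}^{2n})$, so $\mathrm{Op}^{w}(\widetilde{a}_{B})$ is well defined as a continuous operator $\mathcal{S}(\mathbb{R}^{2n})\to\mathcal{S}^{\prime}(\mathbb{R}^{2n})$ by the Schwartz kernel formalism of Section 2. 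Since the identity $\widetilde{A}_{B}=\mathrm{Op}^{w}(\widetilde{a}_{B})$ has been established on the dense subspace $\mathcal{S}(\mathbb{R}^{2n})$ of symbols, and both sides depend continuously on $a$ in $\mathcal{S}^{\prime}$ (the left side by the standard continuity of the Moyal product as a bilinear map $\mathcal{S}^{\prime}\times\mathcal{S}\to\mathcal{S}^{\prime}$, the right side by continuity of the Weyl quantization), the identity extends to all $a\in\mathcal{S}^{\prime}(\mathbb{R}^{2n})$.

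The main obstacle is the linear algebra bookkeeping in the change of variables: one must simultaneously match the Moyal phase with the Weyl phase and get the Jacobian and overall constants correct. Once the substitution above is identified, everything falls into place and the proof reduces to the observation that the Moyal kernel and the Weyl kernel differ only by a linear reparametrisation of the integration variables. The distributional extension is then a soft argument using that $\widetilde{a}_{B}$ is obtained from $a$ by a tame linear pullback (a special case of the extension maps $E_{s}$ of Definition \ref{Extmap}).
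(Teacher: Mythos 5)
The paper gives no proof of this lemma; it simply cites \cite{Bopp1} (de Gosson--Luef) for it. So there is no in-paper argument to compare against, and what you have written is a self-contained direct proof rather than a variant of the authors' route.

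Your computation is correct. With $\sigma_{n}(v,u)=v_{2}\cdot u_{1}-v_{1}\cdot u_{2}$ and the substitution $v_{1}=2(x-x')$, $v_{2}=2(y-y')$, $u_{1}=x'-x-\eta$, $u_{2}=y'-y+\xi$, one indeed gets $z-\tfrac12 v=(x',y')$ and $z+\tfrac12 u=\bigl(\tfrac{x+x'}{2}-\tfrac{\eta}{2},\,\tfrac{y+y'}{2}+\tfrac{\xi}{2}\bigr)$; the products $(y-y')\cdot(x'-x)$ and $(x-x')\cdot(y-y')$ cancel in $-\tfrac12\sigma_{n}(v,u)$, leaving exactly $(x-x')\cdot\xi+(y-y')\cdot\eta$; and the Jacobian of $(u,v)\mapsto(x',y',\xi,\eta)$ has absolute value $|\det(-2I_{2n})|\cdot|\det\begin{pmatrix}0&-I_{n}\\ I_{n}&0\end{pmatrix}|=4^{n}$, which turns $(4\pi)^{-2n}$ into $(2\pi)^{-2n}$, the correct Weyl constant on $\mathbb{R}^{2n}$. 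The passage to $a\in\mathcal{S}'(\mathbb{R}^{2n})$ is the expected density/continuity argument and is fine, though you might state more explicitly that $a\star\Psi$ for $a\in\mathcal{S}'$ and $\Psi\in\mathcal{S}$ is being understood in the standard extended sense (e.g.\ via the Weyl symbol of a composition, or via weak-$*$ continuity), since the Moyal integral itself is only formal there; once that convention is fixed, your argument closes the lemma. As a small bonus you could observe, as the paper does implicitly two displays later, that your linear substitution shows $\widetilde{a}_{B}=E_{s}[a]$ for the explicit $s\in\operatorname*{Sp}(4n,\mathbb{R})$ of equation (\ref{sym-Bopp}), which makes the continuity of the pullback $a\mapsto\widetilde{a}_{B}$ on $\mathcal{S}'$ immediate.
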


For a proof see \cite{Bopp1}. We then have

\begin{proposition}
Let $a \in{H\Gamma}_{\rho}^{m_{1},m_{0}}(\mathbb{R}^{2n})$. Then
$\widetilde{A}_{B} \in\widetilde{HG}_{\rho}^{m_{1},m_{0}}(\mathbb{R}^{4n})$.
\end{proposition}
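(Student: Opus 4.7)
The plan is to read off the Weyl symbol of $\widetilde{A}_{B}$ from the preceding lemma, exhibit an explicit linear symplectomorphism $s$ of $\mathbb{R}^{4n}=\mathbb{R}^{2(n+n)}$ for which $\widetilde{a}_{B}=(a\otimes 1_{2n})\circ s$, and then invoke Definition \ref{definitionESC} (taking its $k$ equal to $n$) to conclude that $\widetilde{A}_{B} \in \widetilde{HG}_{\rho}^{m_{1},m_{0}}(\mathbb{R}^{4n})$.

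Concretely, the lemma gives
\[
\widetilde{a}_{B}(x,y;\xi,\eta)=a\bigl(x-\tfrac{1}{2}\eta,\;y+\tfrac{1}{2}\xi\bigr),
\]
so, in the notation of the Remark following Definition \ref{Extmap}, the first two output blocks of $s$ are forced to be $x'(x,y;\xi,\eta)=x-\tfrac{1}{2}\eta$ and $\xi'(x,y;\xi,\eta)=y+\tfrac{1}{2}\xi$. The symmetric completion I would try is
\[
s:(x,y;\xi,\eta)\longmapsto\bigl(x-\tfrac{1}{2}\eta,\;y-\tfrac{1}{2}\xi;\;y+\tfrac{1}{2}\xi,\;x+\tfrac{1}{2}\eta\bigr).
\]
By construction $(a\otimes 1_{2n})\circ s=\widetilde{a}_{B}$, so once $s$ is known to be symplectic the identity $\widetilde{a}_{B}=E_{s}[a]$ follows.

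The only nontrivial step is verifying that $s\in\operatorname*{Sp}(4n,\mathbb{R})$. This is a routine calculation: writing $s$ in block form acting on $(x,y,\xi,\eta)^{T}$ and checking the three standard conditions $A^{T}C=C^{T}A$, $B^{T}D=D^{T}B$, $A^{T}D-C^{T}B=I$ reduces to elementary block products; equivalently, one can expand $\sigma_{n+k}(s(z_{1}),s(z_{2}))$ directly with $k=n$ and observe that the $\tfrac{1}{4}$-cross-terms cancel in antisymmetric pairs while the $\tfrac{1}{2}$-contributions reassemble into $\sigma_{n+k}(z_{1},z_{2})$. Once this is in hand, Definition \ref{definitionESC} applies immediately: since $a\in H\Gamma_{\rho}^{m_{1},m_{0}}(\mathbb{R}^{2n})$ and $\widetilde{a}_{B}=E_{s}[a]$, one gets $\widetilde{a}_{B}\in\widetilde{H\Gamma}_{\rho}^{m_{1},m_{0}}(\mathbb{R}^{4n})$, and hence $\widetilde{A}_{B}\in\widetilde{HG}_{\rho}^{m_{1},m_{0}}(\mathbb{R}^{4n})$. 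No analytic subtlety is anticipated — the content of the proof is entirely the construction of $s$, which is in fact the symplectomorphism already implicit in the classification of Bopp operators as symplectic dimensional extensions mentioned in the introduction.
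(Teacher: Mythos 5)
Your proof is correct and follows essentially the same route as the paper: the coordinate map you construct, $s(x,y;\xi,\eta)=(x-\tfrac{1}{2}\eta,\,y-\tfrac{1}{2}\xi;\,y+\tfrac{1}{2}\xi,\,x+\tfrac{1}{2}\eta)$, is exactly the symplectomorphism the paper writes in block form as $s=\begin{pmatrix} I & -\frac{1}{2}D\\ D & \frac{1}{2}I\end{pmatrix}$ with $D=\begin{pmatrix}0 & I\\ I & 0\end{pmatrix}$, and the conclusion then follows from Definition \ref{definitionESC} just as in the paper. The symplecticity check you outline does go through, so there is no gap.
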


\begin{proof}
The Weyl symbol $\widetilde{a}_{B}$ (\ref{Bopp-Weyl}) is given by
$\widetilde{a}_{B}=E_{s}[a]$ where $E_{s}:\mathcal{S}^{\prime}(\mathbb{R}%
^{2n})\longrightarrow\mathcal{S}^{\prime}(\mathbb{R}^{4n})$ is the
symplectic dimensional
extension map associated with the symplectomorphism $s\in$ $\operatorname*{Sp}%
(4n,\mathbb{R})$
\begin{equation}
{s}=%
\begin{pmatrix}
I & -\frac{1}{2}D\\
D & \frac{1}{2}I
\end{pmatrix}
\text{ , }\ D=%
\begin{pmatrix}
0 & I\\
I & 0
\end{pmatrix}
.\label{sym-Bopp}%
\end{equation}
Hence, for $a\in{H\Gamma}_{\rho}^{m_{1},m_{0}}(\mathbb{R}^{2n})$ we have
$\widetilde{a}_{B}\in\widetilde{H\Gamma}_{\rho}^{m_{1},m_{0}}(\mathbb{R}%
^{4n})$ and so $\widetilde{A}_{B}\in\widetilde{HG}_{\rho}^{m_{1},m_{0}%
}(\mathbb{R}^{4n})$.
\end{proof}

Let us then determine the explicit form of the intertwining operators that
generate the eigenfunctions of $\widetilde{A}_{B}=a\star$ from those of $A=$
Op$^{w}(a)$.

\begin{proposition}
The intertwiners for the Bopp operators are given by
\begin{equation}
T_{\widetilde{S},\chi}\phi=\left(  2\pi\right)  ^{n/2}i^{-n}W(\phi
,\overline{\hat{\chi}}) \label{tw}%
\end{equation}
where $W$ is the cross-Wigner function (\ref{CWF}).
\end{proposition}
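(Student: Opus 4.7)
The plan is to follow the strategy of Proposition \ref{propL2}, adapted to the symplectomorphism (\ref{sym-Bopp}). First I would invert the matrix (\ref{sym-Bopp}), obtaining (using $D^{2}=I$)
\[
s^{-1}=\begin{pmatrix}\tfrac{1}{2}I & \tfrac{1}{2}D\\ -D & I\end{pmatrix},
\]
and then solve the system (\ref{defW}) for a quadratic generating function. Eliminating the primed momenta in the defining linear equations of $s^{-1}$ gives the reduced relations $\xi=2(y-y')$ and $\eta=2(x-x')$; integrating $\partial_{x}W$ and $\partial_{y}W$ accordingly, and pinning down the integration constants by the conditions $-\partial_{x'}W=\xi'$ and $-\partial_{y'}W=\eta'$, one finds
\[
W(x,y;x',y')=2xy-2xy'-2x'y+x'y'.
\]
Reading off the cross-matrix $L=2D$ yields $|\det L|=2^{2n}$, so with Maslov index $m=0$ the normalization (\ref{deltaw}) is $\Delta_{0}(W)=2^{n}$.

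Next, I would use the generator formula (\ref{swm}) with configuration dimension $2n$, giving the overall prefactor $(2\pi i)^{-n}\cdot 2^{n}=(\pi i)^{-n}$, and evaluate $\widetilde{S}^{-1}$ on $\Psi=\phi\otimes\overline{\hat{\chi}}$. Expanding $\overline{\hat{\chi}(y')}$ via (\ref{kihat}) introduces an auxiliary integration variable $z$; after this substitution the variable $y'$ enters the total phase only linearly, so the $y'$-integration produces a Dirac mass enforcing $x'=2x-z$. What remains is a one-dimensional $z$-integral carrying the phase $e^{-2ixy}e^{2iyz}$ and the integrand $\phi(2x-z)\overline{\chi(z)}$. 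The change of variables $z=x-\tfrac{1}{2}x'$, whose Jacobian is $2^{n}$, identifies this with $(2\pi)^{n}2^{-n}e^{2ixy}W(\phi,\chi)(x,y)$ where $W$ is the cross-Wigner function (\ref{CWF}). Collecting constants, $(\pi i)^{-n}\cdot(2\pi)^{-n/2}\cdot(2\pi)^{n}\cdot 2^{-n}=(2\pi)^{n/2}i^{-n}$ and the two exponentials cancel, producing
\[
T_{\widetilde{S},\overline{\hat{\chi}}}\phi=(2\pi)^{n/2}i^{-n}W(\phi,\chi).
\]

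Finally, I would invoke the involution $\chi\leftrightarrow\overline{\hat{\chi}}$ (which holds because $\overline{\hat{\overline{\hat{\chi}}}}=\chi$ in the Fourier convention fixed by (\ref{kihat})) to convert this identity into the claimed formula (\ref{tw}), exactly as in the last line of the proof of Proposition \ref{propL2}. The only real obstacle in the whole argument is bookkeeping of constants: one must make sure that the metaplectic prefactor $(2\pi i)^{-n}\Delta_{0}(W)$, the Fourier normalization $(2\pi)^{-n/2}$ of (\ref{kihat}), and the Jacobian of the change of variables combine precisely to $(2\pi)^{n/2}i^{-n}$. No further analytical input is required, since Schwartz-class convergence of all intermediate integrals is guaranteed by Lemma \ref{lem} and the action of $\widetilde{S}^{-1}$ is well-defined as a metaplectic operator on $L^{2}(\mathbb{R}^{2n})$.
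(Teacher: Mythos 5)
Your approach is the same as the paper's (invert $s$, compute the generating quadratic $W$, apply (\ref{swm}) with $m=0$ and $|\det L|=2^{2n}$, expand $\overline{\hat{\chi}}$ via (\ref{kihat}), integrate out $y'$ to produce a delta constraint, change variables to recognize the cross-Wigner transform, then trade $\chi\leftrightarrow\overline{\hat{\chi}}$), and all your structural steps match. However, the one thing you flagged as the ``only real obstacle'' --- tracking the constants --- is exactly where the argument goes wrong as written.

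Your tally $(\pi i)^{-n}\cdot(2\pi)^{-n/2}\cdot(2\pi)^{n}\cdot 2^{-n}$ does \emph{not} equal $(2\pi)^{n/2}i^{-n}$: the $\pi$-powers give $\pi^{-n-n/2+n}=\pi^{-n/2}$ and the $2$-powers give $2^{-n/2+n-n}=2^{-n/2}$, so the product is $(2\pi)^{-n/2}i^{-n}$, off from the claimed constant by a full factor $(2\pi)^{n}$. The source of the discrepancy is that there are \emph{two} factors of $(2\pi)^n$ in this calculation that you have merged into one: the $y'$-integration produces $\int_{\mathbb{R}^n}e^{i(-2x+x'+z)\cdot y'}\,dy'=(2\pi)^n\delta(-2x+x'+z)$, contributing one $(2\pi)^n$, and then recognizing the remaining $u$-integral as the cross-Wigner function defined with the normalization $(2\pi)^{-n}$ in (\ref{CWF}) contributes a second $(2\pi)^n$. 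With both factors present the product is $(\pi i)^{-n}(2\pi)^{-n/2}(2\pi)^{n}2^{-n}(2\pi)^{n}=(2\pi)^{n/2}i^{-n}$, as required. A secondary minor point: the substitution $z=x-\tfrac12 u$ has $dz=2^{-n}du$, so it is $2^{-n}$ (not $2^{n}$) that enters the integral; you used the right factor in the product but described it inconsistently, and the stray $e^{2ixy}$ in your intermediate expression for the $z$-integral is also spurious once $e^{-2ixy}$ is kept inside the integrand, as you ultimately acknowledge when saying the exponentials cancel.
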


\begin{proof}
The intertwining operators are given by $T_{\widetilde{S},\chi}\phi
=\widetilde{S}^{-1}\phi\otimes\chi$ where $\widetilde{S}^{-1}$ is (one of the
two) metaplectic operators that project into the symplectomorphism $s^{-1}\in$
$\operatorname*{Sp}(4n,\mathbb{R})$, inverse of (\ref{sym-Bopp}).
\[
{s}^{-1}=%
\begin{pmatrix}
\frac{1}{2}I & \frac{1}{2}D\\
-D & I
\end{pmatrix}
\text{ ,}\ D=%
\begin{pmatrix}
0 & I\\
I & 0
\end{pmatrix}
.
\]
From (\ref{defW}) we find the quadratic form that generates $s^{-1}$
\[
W(x,y;x^{\prime},y^{\prime})=-2(y\cdot x^{\prime}+x\cdot y^{\prime}%
)+x^{\prime}\cdot y^{\prime}+2x\cdot y.
\]
Choosing the Maslov index $m=0$ in (\ref{deltaw}) and taking into account that
$|\det$ $L|=2^{2n}$ we get from (\ref{swm})
\begin{equation}
\widetilde{S}^{-1}\Psi(x,y)=\left(  \tfrac{1}{\pi i}\right)  ^{n}%
\int_{\mathbb{R}^{2n}}e^{iW(x,y;x^{\prime},y^{\prime})}\Psi(x^{\prime
},y^{\prime})\,dx^{\prime}dy^{\prime}.\label{stipsi2}%
\end{equation}
Setting $\Psi=\phi\otimes\overline{\hat{\chi}}$ and using (\ref{kihat}) we
get
\begin{align*}
T_{\widetilde{S},\overline{\hat{\chi}}}\phi(x,y) &  =\left(  \tfrac{1}{2\pi
}\right)  ^{n/2}\left(  \tfrac{1}{i\pi}\right)  ^{n}\int_{\mathbb{R}^{3n}%
}e^{iW(x,y;x^{\prime},y^{\prime})}e^{iz\cdot y^{\prime}}\phi(x^{\prime
})\overline{\chi(z)}\,dzdx^{\prime}dy^{\prime}\\
&  =\left(  \tfrac{2}{\pi}\right)  ^{n/2}i^{-n}\int_{\mathbb{R}^{n}%
}e^{-iy\cdot(2x^{\prime}-2x)}\phi(x^{\prime})\overline{\chi(2x-x^{\prime}%
)}\,dx^{\prime}.
\end{align*}
Making the change of variables $x^{\prime\prime}=2x^{\prime}-2x$ and taking
into account that $\varphi=\overline{\hat{\chi}}\Longrightarrow\chi
=\overline{\hat{\varphi}}$ we easily obtain (\ref{tw}).
\end{proof}

{\vskip 1.0cm}

\noindent\textbf{Acknowledgements}. The authors would like to
thank the anonymous referee for making an important correction in
section 5.

Nuno Costa Dias and Jo\~{a}o Nuno Prata have been supported by the
research grant PTDC/MAT/099880/2008 of the Portuguese Science
Foundation (FCT). Maurice de Gosson has been supported by a
research grant from the Austrian Research Agency FWF
(Projektnummer P23902-N13).

\pagebreak

**********************************************************************************************************************************************************************************************************

\textbf{Author's addresses:}

\begin{itemize}
\item \textbf{Jo\~{a}o Nuno Prata }and\textbf{\ Nuno Costa Dias:
}Departamento de Matem\'{a}tica. Universidade Lus\'{o}fona de
Humanidades e Tecnologias. Av. Campo Grande, 376, 1749-024 Lisboa,
Portugal

and

Grupo de F\'{\i}sica Matem\'{a}tica, Universidade de Lisboa, Av.
Prof. Gama Pinto 2, 1649-003 Lisboa, Portugal

\item \textbf{Maurice A. de Gosson:} Universit\"{a}t Wien,
Fakult\"{a}t f\"{u}r Mathematik--NuHAG, Nordbergstrasse 15, 1090
Vienna, Austria
\end{itemize}

**********************************************************************************************************************************************************************************************************
\end{document}